\newtheorem{theorem}{Theorem}[section]
\newtheorem{lemma}[theorem]{Lemma}
\newtheorem{corollary}[theorem]{Corollary}
\newtheorem{conjecture}[theorem]{Conjecture}
\newcommand{\sm}{\setminus}
\newcommand{\E}{\mathbb{E}}
\newcommand{\N}{\mathbb{N}}
\newcommand{\Z}{\mathbb{Z}}
\newcommand{\bone}{\mathbh{1}}
\newcommand{\cA}{\mathcal{A}}
\newcommand{\cB}{\mathcal{B}}
\newcommand{\cC}{\mathcal{C}}
\newcommand{\cD}{\mathcal{D}}
\newcommand{\cE}{\mathcal{E}}
\newcommand{\cF}{\mathcal{F}}
\newcommand{\cG}{\mathcal{G}}
\newcommand{\cH}{\mathcal{H}}
\newcommand{\cM}{\mathcal{M}}
\newcommand{\cP}{\mathcal{P}}
\newcommand{\cT}{\mathcal{T}}
\newcommand{\cZ}{\mathcal{Z}}
\newcommand{\eps}{{\varepsilon}}
\begin{document}
\begin{frontmatter}

\title{Order-invariant measures on causal sets}
\runtitle{Order-invariant measures on causal sets}

\begin{aug}
\author[A]{\fnms{Graham} \snm{Brightwell}\ead[label=e1]{g.r.brightwell@lse.ac.uk}}
and
\author[A]{\fnms{Malwina} \snm{Luczak}\corref{}\ead[label=e2]{m.j.luczak@lse.ac.uk}\thanksref{t1}}
\runauthor{G. Brightwell and M. Luczak}
\affiliation{London School of Economics and Political Science}
\thankstext{t1}{Supported in part by a grant
from STICERD.}
\address[A]{Department of Mathematics\\
London School of Economics\\
\quad and Political Science\\
London WC2A 2AE\\
United Kingdom\\
\printead{e1}\\
\hphantom{E-mail: }\printead*{e2}} 
\end{aug}

\received{\smonth{1} \syear{2009}}
\revised{\smonth{4} \syear{2010}}

%
\begin{abstract}
A causal set is a partially ordered set on a countably infinite ground-set
such that each element is above finitely many others. A~natural extension
of a causal set is an enumeration of its elements which respects the order.

We bring together two different classes of random processes. In one
class, we are given a fixed causal set, and we consider random natural
extensions of this causal set: we think of the random enumeration as being
generated one point at a time. In the other class of processes, we
generate a random causal set, working from the bottom up,
adding one new maximal element at each stage.

Processes of both types can exhibit a property called order-inva\-riance: if
we stop the process after some fixed number of steps, then, conditioned on
the structure of the causal set, every possible order of generation of its
elements is equally likely.

We develop a framework for the study of order-invariance which includes
both types of example: order-invariance is then a property of probability
measures on a certain space. Our main result is a description of the
extremal order-invariant measures.
\end{abstract}

%
\begin{keyword}[class=AMS]
\kwd{60C05}
\kwd{06A07}
\kwd{60F99}.
\end{keyword}
\begin{keyword}
\kwd{Causal sets}
\kwd{infinite posets}
\kwd{random linear extensions}
\kwd{invariant measures}.
\end{keyword}

\end{frontmatter}

\section{Introduction} \label{sec:intro}

This work is intended as a common generalization of two different strands
of research: a proposal from physicists for a mathematical model of
space--time as a discrete poset, and a notion of a ``random linear
extension'' of an infinite partially ordered set. One of our aims is to
show that these two lines of research are intimately connected.

The objects we study are \textit{causal sets}, which are countably infinite
partially ordered sets $P=(Z,<)$ such that every element is above only
finitely many others. A \textit{natural extension} of a causal set is a
bijection from $\N$ to $Z$ whose inverse is order-preserving; that is,
it is
an enumeration of $Z$ that respects the ordering $<$.

We consider random processes that generate a causal set one element at a
time, starting with the empty poset, and at each stage adding one new
maximal element, keeping track of the order in which the elements are
generated. Such a process is called a \textit{growth process}. The infinite
poset $P$ generated by a growth process is always a causal set, and the
order in which the elements are generated is a natural extension of $P$.

We will postpone most of the formal definitions for a while, although we
will introduce some notation that will be consistent with that\vadjust{\goodbreak} used in the
bulk of the paper. Our main purpose in this section is to motivate the
ideas of the paper by examining some examples. Before that, we need a
little terminology.

A \textit{(labeled) poset} $P$ is a pair $(Z,<)$, where $Z$ is a set (for
us, $Z$ will always be countable), and $<$ is a partial order on $Z$,
that is,
a transitive irreflexive relation on~$Z$.
An order $<$ on $Z$ is a \textit{total order} or \textit{linear order} if
each pair $\{a,b\}$ of distinct elements of $Z$ is comparable ($a<b$ or
$b<a$).

A \textit{down-set} in $P$ is a subset $Y \subseteq Z$ such that, if $a\in Y$
and $b<a$, then $b \in Y$. An \textit{up-set} is the complement of a
down-set: a set $U \subseteq Z$ such that $b\in U$ and $a>b$ implies
$a\in U$.

A pair $(x,y)$ of elements of $Z$ is a \textit{covering pair} if $x<y$, and
there is no $z\in Z$ with $x<z<y$. We also say that $x$ is \textit{covered}
by $y$, or that $y$ \textit{covers} $x$.

If $P=(Z,<)$ is a poset, and $Y \subseteq Z$, then $<_Y$ denotes the
restriction of the partial order to $Y$, and $P_Y=(Y,<_Y)$. For
$W \subset Z$, we also write $P\sm W$ to mean $P_{Z\sm W}$.

For $P = (Z,<)$ a poset on any ground-set $Z$, a \textit{linear extension} of
$P$ is a~total order $\prec$ on $Z$ such that, whenever $x<y$, we also have
$x \prec y$. In the case where $Z$ is finite, the set of linear extensions
is also finite.

We will often be considering posets on the set $\N$, or on one of the sets
$[k]=\{1,\ldots, k\}$, for $k \in\N$, which come equipped with a
``standard'' linear order. In these cases, a \textit{suborder} of $\N$ or
$[k]$ will be a partial order on that ground-set (typically denoted
$<^\N$
or $<^{[k]}$) with the standard order as a linear extension, that is, if
$<^\N$ is a suborder of $\N$ and $i<^\N j$, then $i$ is below $j$ in the
standard order on $\N$.

In the case where the ground-set $Z$ of $P$ is countably infinite, the
natural extensions of $P$ correspond to the linear extensions $\prec$ with
the order-type of the natural numbers: specifically, given a natural
extension of $P$, which is a bijection $\lambda\dvtx \N\to Z$ whose
inverse is
order-preserving, we obtain a linear extension $\prec$ of $P$ by setting
$\lambda(i) \prec\lambda(j)$ whenever $i < j$ in the standard order on
$\N$.

\begin{ex}\label{ex1}
Figure~\ref{fig:ladder} below shows the Hasse diagram of a labeled causal
set $P=(Z,<)$, where $Z = \{ a_1, a_2, \ldots\}$, and $a_j>a_i$ if $j>i+1$.
(Later, we will require that the $a_i$ are distinct real numbers in
$[0,1]$, but the order $<$ imposed on the $a_i$ by $P$ has no relation to
the order of $[0,1]$.)

\begin{figure}

\includegraphics{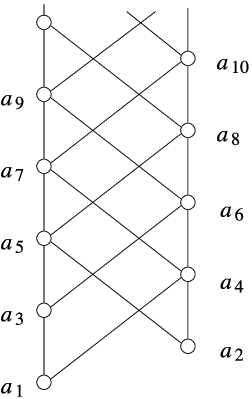}

\caption{The causal set $P=(Z,<)$.} \label{fig:ladder}
\end{figure}

The natural extensions of this poset $P$ are the bijections
$\lambda\dvtx \N\to Z$ such that, for $i<j$, $a_i\not> a_j$. Equivalently,
we require that $\{ \lambda(1), \ldots, \lambda(k) \}$ is a~down-set in
$P$, for each $k$.

We are interested in a particular probability measure $\mu$ on the set
$L(P)$ of natural extensions $\lambda$ of $P$, which has properties one
would associate with a~``uniform'' probability measure. The
$\sigma$-field of measurable sets is generated by events of the form
\[
E(a_{i_1}a_{i_2}\cdots a_{i_k}) = \{ \lambda\dvtx \lambda(j) = a_{i_j}
\mbox{ for } j=1, \ldots, k\},
\]
the set of natural extensions with ``initial segment''
$a_{i_1}a_{i_2}\cdots a_{i_k}$, for $k \in\N$ and the $i_j$ distinct
elements of $\N$. We call $a_{i_1}a_{i_2}\cdots a_{i_k}$ an
\textit{ordered stem} if $\{ a_{i_1}, \ldots, a_{i_j} \}$ is a down-set in
$P$, for $j=1, \ldots, k$: in other words if there is a~natural extension
of $P$ with this initial segment.

We describe the measure $\mu$ via a random process for generating the
sequence $\lambda(1), \lambda(2), \ldots$ sequentially. Given the set
$X_k = \{ \lambda(1), \lambda(2), \ldots, \lambda(k) \}$, the element
$\lambda(k+1)$ has to be one of the minimal elements of $P\sm X_k$, and
there are at most two of these. The random process we are interested in
is the one defined by the following rules:
\begin{itemize}
\item  if there is only one minimal element $a_k$ of $P\sm X_k$, take
$\lambda(k+1) = a_k$ with probability~1;
\item  if there are two minimal elements
$a_{k+1}$ and $a_{k+2}$ of $P\sm X_k$, set $\lambda(k+1) = a_{k+1}$ with
probability $\phi= \frac{1}{2}(\sqrt5 -1) = 0.618\ldots$ and
$\lambda(k+1) = a_{k+2}$ with probability $1-\phi$.
\end{itemize}
It is easy to see that the function $\lambda$ generated by these rules is
always a~natural extension of~$P$.

We have described this as a process generating a random natural extension,
but we can also think of it as a growth process, growing a causal set by
adding one new maximal element at each step: the process always generates
the same infinite causal set $P$, but the order in which the elements are
generated is random.

We now calculate
%
\begin{equation} \label{eq:zero}
\mu(E(a_1a_2)) = \phi^2 = 1-\phi= \mu(E(a_2a_1)).
\end{equation}
Indeed, we choose $\lambda(1) = a_1$ with probability $\phi$; having done
so, we choose $\lambda(2) = a_2$ with probability $\phi$. On the other
hand, we choose $\lambda(1) = a_2$ with probability $1-\phi$; having
done so, $a_1$ is the only minimal element of $P \sm\{a_2\}$, so we
choose $\lambda(2) = a_1$ with probability~1.

Moreover, we claim that, whenever $a_{i_1}a_{i_2} \cdots a_{i_k}$ and
$a_{\ell_1}a_{\ell_2}\cdots a_{\ell_k}$ are two ordered stems with
$\{a_{i_1},\ldots,a_{i_k}\} = \{a_{\ell_1},\ldots,a_{\ell_k}\}$,
we have
%
\begin{equation} \label{eq:one}
\mu(E(a_{i_1}a_{i_2} \cdots a_{i_k})) =
\mu(E(a_{\ell_1}a_{\ell_2}\cdots a_{\ell_k})).
\end{equation}
If the two orders $a_{i_1}a_{i_2}\cdots a_{i_k}$ and
$a_{\ell_1}a_{\ell_2}\cdots a_{\ell_k}$ differ only by an exchange of
adjacent elements---necessarily $a_r$ and $a_{r+1}$ for some $r$---then
(\ref{eq:one}) follows by essentially the same calculation as in
(\ref{eq:zero}): the two probabilities
$\mu(E(a_{i_1}a_{i_2} \cdots a_{i_k}))$ and
$\mu(E(a_{\ell_1}a_{\ell_2}\cdots a_{\ell_k}))$ are products of terms
which are the same except that one has two terms equal to $\phi$ and the
other has one term equal to $1-\phi$ and another equal to~1.
To see (\ref{eq:one}) in general, it suffices to show that we can step from
$a_{i_1}a_{i_2}\cdots a_{i_k}$ to $a_{\ell_1}a_{\ell_2}\cdots a_{\ell_k}$
by a sequence of exchanges of adjacent elements, staying within the set
of ordered stems. This is a standard fact about the set of linear extensions
of any finite poset: to see it in this case, start with the order
$a_{i_1}a_{i_2}\cdots a_{i_k}$, and move each $a_{\ell_j}$ in turn down
until it reaches position~$j$.

The property in (\ref{eq:one}) is called \textit{order-invariance}. If we
consider instead a~\textit{finite} poset $P=(Y,<)$, then the uniform
probability measure $\nu^P$ on the set of linear extensions of $P$
satisfies order-invariance. Indeed, another way of obtaining the measure
$\mu$ in our example is to consider the sets $Z_n = \{a_1, \ldots,
a_n\}$,
the finite posets $P_n = P_{Z_n}$, and the uniform measures~$\nu
^{P_n}$ on
their sets of linear extensions, for each $n$. It can be shown that
\[
\nu^{P_n}(E(a_{i_1}a_{i_2} \cdots a_{i_k})) \to
\mu(E(a_{i_1}a_{i_2} \cdots a_{i_k}))
\]
as $n \to\infty$, for each ordered initial segment
$a_{i_1}a_{i_2} \cdots a_{i_k}$.
\end{ex}

Our second example is apparently of a very different nature. We consider
a family of probability measures on the set of causal sets with ground-set
$\N$---that is, models of random causal sets---and explain how these
measures also satisfy an order-invariance property.

\begin{ex}\label{ex2}
A \textit{random graph order} $P = (\N,\prec)$, with parameter $p \in(0,1)$,
is defined on the set $\N$ as follows. We take a random graph on $\N$---for each pair $(i,j)$ of elements of $\N$, we put an edge between $i$ and
$j$ with probability $p$, all choices made independently. Then we define
the random order $\prec$ from the random graph by declaring that
$i \prec j$ if there is an increasing sequence
$i = i_1,i_2,\ldots,i_m = j$ of natural numbers such that
$i_\ell i_{\ell+1}$ is an edge for each $\ell=1,\ldots,m-1$.

Equivalently, we could define the random graph order with parameter $p$
via a growth process, adding a new maximal element at each stage. Given
the restriction $P_{[k]}$ to the set $[k]$, at the next step of the
process, a random subset $\Sigma$ of $[k]$ is chosen, with each element
taken into $\Sigma$ independently with probability $p$. Then $k+1$ is
placed above the elements of $\Sigma$, and the transitive closure is taken---so if $j$ is in $\Sigma$ and $i \preceq j$ in $P_{[k]}$, then $i$ is
placed below $k+1$ in $P_{[k+1]}$.

This is a model of random posets---there are versions with the ground-set
being a finite set $[n]$, or $\Z$---with a number of interesting
features, and it also has the advantage that it is relatively easy to
analyze. Accordingly, random graph orders have attracted a fair degree of
attention in the combinatorics literature; see, for
instance, \cite{AF,ABBJ,BB,PT}.

Fix some $k \in\N$, and some suborder $<^{[k]}$ of $[k]$. We claim that
the probability that the order $\prec_{[k]}$ on $[k]$ is equal to
$<^{[k]}$ is given by
%
\begin{equation} \label{eq:c-b}
p^{c(<^{[k]})}(1-p)^{b(<^{[k]})},
\end{equation}
where $c(<^{[k]})$ is the number of covering pairs of $([k],<^{[k]})$, and
$b(<^{[k]})$ is the number of incomparable pairs.

To see this, note that, if $i$ is covered by $j$ in $<^{[k]}$, then in
order for $\prec_{[k]}$ to equal $<^{[k]}$, it is necessary for $ij$
to be
an edge of the random graph. Also, if $i$ and $j$ are incomparable in
$<^{[k]}$, then it is necessary for $ij$ to be a non-edge. Conversely, if
$i <^{[k]} j$, but $i$ is not covered by $j$, then there is some sequence\vspace*{1pt}
$i =i_1i_2\cdots i_m=j$ of elements of $[k]$ such that $i_\ell$ is covered
by $i_{\ell+1}$ in $<^{[k]}$, for $\ell=1, \ldots, m-1$. Provided that
each edge $i_\ell i_{\ell+1}$ is in the random graph, we will have
$i \prec j$ whether or not the edge $ij$ is in the random graph. Thus,
$\prec_{[k]}$ is equal to $<^{[k]}$ if and only if all the covering pairs
of $<^{[k]}$ span edges in the random graph, and all the incomparable
pairs do not.

The key point for our purposes is that the expression (\ref{eq:c-b}) is
an isomorphism-invariant of the poset $<^{[k]}$, and so isomorphic posets
have equal probabilities of arising as $\prec_{[k]}$. We again call this
property \textit{order-invariance}. An interpretation is that, if we stop
the process when there are $k$ elements, and look at the structure of the
poset, but not at the numbering of the elements, then, conditioned on this
information, each linear extension of the poset is equally likely to have
been the order in which the elements were generated.
\end{ex}

Growth processes, of a type similar to those in Example~\ref{ex2}, were
investigated by Rideout and Sorkin \cite{RS}, who view them as possible
discrete models for the space--time universe. The idea is that the
elements of the (random) causal set form the (discrete) set of points in
the space--time universe, and the partial order $\prec$ is interpreted as
``is in the past light-cone of.''

The order in which the elements of the causal set are generated is not
deemed to have any physical meaning, so it should not be possible to
extract information about this order from the causal set at any stage.
Rideout and Sorkin thus viewed growth processes as being Markov chains on
the set of finite \textit{unlabeled} causal sets, where each transition adds
a new maximal element. They studied such processes with the property
that, conditional on the causal set at some stage $k$ being equal to some
unlabeled $k$-element poset~$P$, each linear extension of $P$ is equally
likely to have been the order in which the elements were generated. They
called this property ``general covariance.'' Alternatively, we can view
the Rideout--Sorkin processes as generating an order on the ground-set
$\N$, as in Example~\ref{ex2}; then the property of general covariance translates
to the property of order-invariance, as described in Example~\ref{ex2}.

In \cite{RS}, Rideout and Sorkin characterized all growth processes
satisfying general covariance as well as another condition called
\textit{Bell causality}, and also a ``connectedness'' condition that prevents
the model breaking up as a sequence of models of posets stacked on top of
one another. The models satisfying all three conditions are called
\textit{classical sequential growth models} or \textit{csg models}; these were
studied further in \cite{RS2,Georgiou,BG}. Random graph orders, as in
Example~\ref{ex2}, are the prime examples of csg models. A general csg model can
be described in similar terms to our description of a random graph order
as a growth model; the particular csg model is specified by a sequence of
real parameters $t_n$ representing the relative probability of choosing
the random set $\Sigma$ to be equal to a given set $S$ of size~$n$.

Brightwell and Georgiou \cite{BG} determined that the large-scale
structure of any csg model is that of a semiorder, and in particular is
quite unlike the observed space--time structure of the universe.

Varadarajan and Rideout \cite{VR} and Dowker and Surya \cite{DS} describe
the models that can arise if the connectedness condition is dropped. Here
there is a fascinating extra layer of complexity: the causal sets arising
are all obtained by stacking ``csg models'' on top of one another, and
the sizes of ``later'' components may depend on the detailed structure of
``earlier'' ones if these are finite.

The underlying reason that csg models cannot produce causal sets that
resemble the observed universe seems to lie with the condition of Bell
causality: it is possible to show that any process producing causal
sets of
the desired type (essentially, those induced on a discrete set of points
arising from a~Poisson process on a Lorentzian manifold) will not satisfy
this condition.

Our aim in this paper is, effectively, to study the class of growth
processes satisfying general covariance: this class is vastly richer than
the class of csg models. For instance, if we drop the labels $a_i$ from
the causal set in Example~\ref{ex1}, and consider the growth process that we
described there as being a process on unlabeled posets, then the property
of order-invariance again translates to general covariance.

Dealing with unlabeled combinatorial structures is often awkward; in
cases similar to Example~\ref{ex1}, it is also very unnatural. So we will deal
with labeled causal sets from now on, and we want to express
order-invariance in terms of notation similar to that used in Example~\ref{ex1}.

We are thus faced with the problem of how to incorporate random graph
orders (and other csg models) into our setting. The numbering of the
elements that we used in Example~\ref{ex2} specifies the order of generation of
the elements, and so these numbers cannot serve as labels in the same
sense as the $a_i$ are used to label the elements in Example~\ref{ex1}.

It is useful at this point to introduce another family of examples, in
some ways trivial but in other ways far from it.

\begin{ex}\label{ex3}
We consider growth processes where the causal set generated is a.s.\ an
antichain (i.e., no two elements are comparable). This is the case if we
take a random graph order with $p=0$: we certainly do want to include some
such growth processes within our framework.

If we require our causal sets to be labeled, then a growth process which
a.s.\ generates an antichain is nothing more than a sequence of random
variables: the labels of the elements, in the order they are introduced.

Order-invariance requires that, if we condition on the set of the first
$k$ labels, for any $k$, then each of the $k!$ orderings of these labels
is equally likely. This is exactly the requirement that the sequence of
labels be \textit{exchangeable}.

One way to generate a sequence of exchangeable random labels is to take
any probability distribution $\tau$ on any set $X$ of potential labels,
and let the labels be an i.i.d. sequence of random elements of $X$ with
probability measure $\tau$. We will want our labels to be a.s.\ distinct,
so we need the probability measure $\tau$ to be atomless.

The Hewitt--Savage theorem \cite{HS} states that every sequence of
exchangeable random variables is a mixture of sequences of the type
described above (i.e., there is a probability measure $\rho$ on some space
of probability measures on a set $X$: one measure $\tau$ is chosen
according to $\rho$, and then an i.i.d. sequence of random elements of
$X$ is
generated according to $\tau$).

For instance, we can take $X$ to be the interval $[0,1]$, equipped with
its usual Borel $\sigma$-field and Lebesgue probability measure, and
$\tau$ to be the uniform probability measure on $X$. Our growth process
then operates as follows: at each stage, we introduce a new element,
labeled with a uniformly random element of $[0,1]$, chosen independently
of all other labels, and we make the new element incomparable with all
existing elements. This is indeed order-invariant: if we condition on the
state of the process after $k$ steps---an antichain labeled with a set
of $k$ numbers from $[0,1]$, a.s.\ distinct---then each of the $k!$
orders of generation is equally likely.
\end{ex}

Formally, we will handle random graph orders in exactly the same way as in
the example above: our growth process will proceed by taking a new
element, assigning it a uniformly random label from $[0,1]$, independent
of any other labels and of the structure of the existing poset, and then
placing the new element above some of the existing elements as described
in Example~\ref{ex2}. Such a growth process will be order-invariant.

In general, it is convenient to work only with causal sets labeled by
elements from a specific set, and we shall choose the interval $[0,1]$,
which comes equipped with its standard (compact) topology, and the Borel
$\sigma$-field~$\cB$ generated by the topology.

One generally applicable way of specifying the outcome of a growth process
is by giving an \textit{infinite} string of (labels of) elements, listed as
$x_1x_2\cdots$ in the order of their generation, together with a suborder
$<^\N$ of the index set $\N$ with its standard order: $i<^\N j$ if and
only if $x_i < x_j$ in the causal set $P=(X,<)$ generated by the process.

Growth processes thus correspond to probability measures on the set
$\Omega$ of pairs
\[
(x_1x_2 \cdots, <^\N),
\]
where the $x_i$ are elements of $[0,1]$ and $<^\N$ is a suborder of
$\N$.
We will proceed by taking $\Omega$ as the outcome space, with the
appropriate $\sigma$-field $\cF$, and considering probability
measures on
$(\Omega,\cF)$. We will set up the notation carefully in
Section~\ref{sec:csp}, introducing the notion of a
\textit{causal set process} or \textit{causet process}, which is effectively the
same as a growth process, but where the states are formally pairs
$(x_1\cdots x_k, <^{[k]})$, where the poset $<^{[k]}$ is on the index set
$[k]$, rather than on the set $X_k = \{ x_1, \ldots, x_k\}$. We give a
formal definition of order-invariance, as a property of probability
measures on $(\Omega, \cF)$, in Section~\ref{sec:oip}.

We emphasize that we will build one space $(\Omega,\cF)$ to accommodate
\textit{all} causet processes, subject only to the fairly arbitrary
restriction that the set of potential labels of elements is $[0,1]$. We
will then study the space of all order-invariant measures, which we will
define as probability measures on $(\Omega,\cF)$ satisfying a certain
condition. This space of order-invariant measures has some good
properties; for instance, it is a convex subset of the set of all
probability measures on $(\Omega,\cF)$, and we shall show in
Section~\ref{sec:invariant} that it is closed in the topology of weak
convergence.

In order to make a systematic study of order-invariant measures, we shall
focus on the \textit{extremal} order-invariant measures: those that cannot
be written as a convex combination of two others.

An order-invariant measure that almost surely produces one fixed
(labeled) causal set $P=(Z,<)$, as in Example~\ref{ex1}, will be called an
\textit{order-invariant measure on $P$}. The process in Example~\ref{ex1} is in fact
the only order-invariant measure on the poset $P$ of
Figure~\ref{fig:ladder}, and it is extremal. We shall see an example
later of a causal set with infinitely many extremal order-invariant
measures on it.

On the other hand, it follows from the analysis in Example~\ref{ex3} that
a~%
labeled antichain $(Z,<)$ admits no order-invariant measures. Indeed
we saw that, if an order-invariant measure generates an antichain a.s.,
then there is a~probability measure $\rho$ on the space of probability
measures on $([0,1],\cB)$, such that the sequence of labels is generated
by first choosing a probability measure $\tau$ according to $\rho$, then
taking an i.i.d. sequence of random variables with distribution~$\tau$.
Now, if $x \in[0,1]$, and $x$ occurs as a label with positive
probability, then $\rho( \tau(\{x\}) > 0) > 0$, and in that case $x$
occurs as a label infinitely often with positive probability. So
such a process cannot generate each label in $Z$ exactly once.

There is however an abundance of extremal order-invariant measures that
are measures on some fixed causal set. Also, there are extremal
order-invariant measures a.s.\ giving rise to an antichain: it follows
from the discussion in Example~\ref{ex3}---in particular, from the
Hewitt--Savage theorem~\cite{HS}---that these are effectively the same as
i.i.d. sequences of random elements of $[0,1]$.

Our main result is Theorem~\ref{thm:extremal}, showing that all extremal
order-invariant measures on $(\Omega, \cF)$ are, in a sense to be made
precise later, a combination of extremal order-invariant measures of these
two types.

Sections~\ref{sec:csne}--\ref{sec:oip} are devoted to defining notation
and terminology, setting up the spaces we are studying, and giving precise
definitions. We also establish some useful properties of order-invariant
measures in Section~\ref{sec:oip}. In Section~\ref{sec:tsc}, we give
details of how examples such as the ones in this section fit into the
general framework. In Section~\ref{sec:invariant}, we show that the set
of order-invariant measures is the set of measures invariant under a
certain family of permutations on $\Omega$, and we derive as a consequence
that the set of order-invariant measures is closed in the topology of weak
convergence. In Section~\ref{sec:eoim}, we give a number of conditions
equivalent to extremality of an order-invariant measure, and also show that
every order-invariant measure is a mixture of extremal ones. Finally,
in Section~\ref{sec:deoim}, we state, discuss and prove
Theorem~\ref{thm:extremal}.

Our results do not provide a \textit{classification} of extremal order-invariant
measures: this would necessarily involve a classification of extremal
order-invariant measures on fixed causal sets, which seems likely to be
prohibitively difficult. However, some partial results in this
direction are
given in the authors' companion paper~\cite{BL2}, where order-invariant
measures on fixed causal sets are studied in depth.

For now, we just point out some more connections to existing literature.
Some years ago, the first author~\cite{Bri1,Bri2} studied random linear
extensions of locally finite posets. The main theorem of~\cite{Bri1},
interpreted in the present context, is as follows. If a causal set $P$
has the property that, for some fixed~$t$, every element is incomparable
with at most $t$ others, then there is a unique order-invariant measure on
$P$. For instance, this applies to the causal set in Example~\ref{ex1}. More
details can be found in~\cite{BL2}.

The specific case where $P$ is the two-dimensional grid $(\N\times\N,<)$
has attracted considerable attention, as it is connected with the
representation theory of the infinite symmetric group, and with harmonic
functions on the Young lattice (which is the lattice of down-sets of $P$).
A good account of this theory appears in Kerov~\cite{Kerov}, where a
somewhat more general theory is also developed. Our concerns in this
paper are different, but the two theories have various points of contact.

The family of natural extensions of a fixed causal set $P$ can also be
viewed as the set of configurations of a (1-dimensional) spin system, and
order-invariant measures can then be interpreted as \textit{Gibbs measures},
so that some of the general results discussed in, for instance,
Bovier~\cite{Bovier} or Georgii~\cite{Georgii} apply. In fact, as we shall
see later, some of the results in~\cite{Georgii} apply to order-invariant
measures in general.

\section{Causal sets and natural extensions} \label{sec:csne}

For a poset $P=(Z,<)$ and an element $x\in Z$, set
$D(x)=\{ y \in Z\dvtx y<x\}$, the set of elements below $x$. We also set
$U(x) = \{ y \in Z \dvtx y > x\}$ and $I(x)$ to be the set of elements
incomparable with $x$. Thus, $\{ D(x), I(x), U(x) \}$ is a partition of
$Z \sm\{x\}$. A causal set is a poset in which $D(x)$ is finite for
all~$x$.

Recall that a \textit{natural extension} of a causal set $P=(Z,<)$ is a
bijection $\lambda$ from $\N$ to $Z$ such that $\lambda^{-1}$ is
order-preserving: that is, if $\lambda(i) < \lambda(j)$, then $i<j$.
It is
often convenient to write natural extensions as $x_1x_2 \cdots$, meaning
that $\lambda(i) =x_i$. In this notation, an \textit{initial segment} of
$\lambda$ is an initial substring $x_1x_2 \cdots x_k$, for some $k\in
\N$.

A natural extension $\lambda$ of a countably infinite poset $P=(Z,<)$
gives rise to a linear extension $\prec$ of $P$ by setting $x \prec y$
whenever $\lambda^{-1}(x) < \lambda^{-1}(y)$. The linear extensions
arising in this way are those with the order-type of $\N$.

Similarly, if $P=(Z,<)$ is a~finite poset, with $|Z|=k$, we can think
of a~linear extension as a bijection $\lambda\dvtx [k] \to Z$ such that
$\lambda^{-1}$ is order-preserving, that is, if $\lambda(i) < \lambda(j)$,
then $i<j$ in~$[k]$. We shall sometimes write a linear extension of a
finite poset $P$ as $x_1\cdots x_k$, meaning that $\lambda(i) = x_i$ for
$i=1,\ldots,k$. For finite partial orders, we shall use these various
equivalent notions of linear extension interchangeably.

A \textit{stem} in a causal set is a finite down-set (this term is less
standard: it has been used in some physics papers). An \textit{ordered stem}
of a causal set $P=(Z,<)$ is a finite string $x_1\cdots x_k$ such that
$X=\{x_1, \ldots, x_k\}$ is a down-set in $P$, and $x_1\cdots x_k$ is a
linear extension of $P_X$. In other words, ordered stems are exactly the
strings that can arise as an initial segment of a natural extension
of~$P$.

For a countable poset $P=(Z,<)$, let $L(P)$ denote the set of natural
extensions of $P$. Also, let $L'(P)$ denote the set of injections
$\lambda$
from $\N$ to~$Z$ such that, for each $i$,
$D(\lambda(i)) \subseteq\{\lambda(1), \ldots, \lambda(i-1)\}$. In
general, elements of $L'(P)$ need not be bijections from $\N$ to $Z$: they
may be invertible maps from~$\N$ onto a proper subset of $Z$, which will
necessarily be an infinite down-set in $P$. Those elements of $L'(P)$ that
are bijections from $\N$ to $Z$ are exactly the natural extensions of $P$.

A countable poset has a natural extension if and only if every element is
above finitely many elements, that is, if and only if it is a causal
set. If
$P$ has no element $x$ with $I(x)$ infinite, then all linear extensions of
$P$ correspond to natural extensions, and $L(P)=L'(P)$. However, if there
is an element $x$ of $P$ with $I(x)$ infinite, then there is (a)~a linear
extension of $P$ that does not have the order-type of $\N$ and (b)~an
element of $L'(P)$ whose image is the proper subset $I(x) \cup D(x)$
of~$P$.

\section{Causal set processes} \label{sec:csp}

A \textit{causal set process} or \textit{causet process} is a discre\-te-time
Markov chain on an underlying probability space $(\Omega, \cF, \mu
)$, that
we shall specify shortly. The elements of the state space $\cE$ of the
Markov chain are ordered pairs $(x_1\cdots x_k, <^{[k]})$, where
$x_1\cdots x_k$ is a string of elements from $[0,1]$, and $<^{[k]}$ is a
suborder of $[k]$. The only permitted transitions of the chain are
\textit{one-point extensions}, from a pair $(x_1\cdots x_k, <^{[k]})$ to a
pair $(x_1\cdots x_kx_{k+1}, <^{[k + 1]})$,\vspace*{1pt} where $x_{k+1}$ is an
element of $[0,1]$, and $<^{[k + 1]}$ is obtained from $<^{[k]}$ by
adding $k + 1$ as a maximal element. A transition from the state
$(x_1\cdots x_k, <^{[k]})$ is thus specified by the element $x_{k+1}$ of
$[0,1]$ to be appended to the string, and the set $D(k + 1)$, a down-set
in $([k],<^{[k]})$.

From each state $(x_1\cdots x_k, <^{[k]})$, we can derive a partial order
\mbox{$P_k=(X_k,<)$}, with ground-set $X_k=\{ x_1, \ldots, x_k\}$, and $x_i < x_j$\vspace*{1pt}
if and only if $i<^{[k]}j$. We always interpret $<^{[k]}$ as giving a
partial order $<$ on $X_k$ in this way. The condition that $<^{[k]}$ is
a suborder of $[k]$ then translates to the condition that the linear order
$x_1\cdots x_k$ is a linear extension of $P_k$; indeed the states of the
causet process are in 1--1 correspondence with the set of pairs
$(P_k,x_1\cdots x_k)$, where $P_k$ is a poset on $\{x_1,\ldots,x_k\}$ and
$x_1\cdots x_k$ is a linear extension of~$P_k$. In this interpretation,
as in Section~\ref{sec:intro}, a transition adds a new maximal element,
drawn from $[0,1]$, to $P_k$.

For fixed $k\in\N$, let $\cE^{[k]}$ be the set of states
$(x_1\cdots x_k, <^{[k]}) \in\cE$, that is, those with $k$ elements.
So all
permitted transitions go from $\cE^{[k]}$ to $\cE^{[k + 1]}$, for some
$k$.

We shall declare our underlying outcome space $\Omega$ and $\sigma$-field
$\cF$ to be the simplest structure supporting all causet processes. The
outcome space $\Omega$ can thus be taken to consist of all possible
sequences of states, starting from the empty string. Now, each
$\omega\in\Omega$ can be identified with a pair ($x_1x_2\cdots,<^\N$),
where $x_1x_2\cdots$ is an infinite sequence of elements of $[0,1]$, and
$<^\N$ is a suborder of $\N$. It is convenient for us to \textit{define}
$\Omega$ as the set of all such pairs $\omega= (x_1x_2\cdots, <^\N)$.

We define the projections $\pi_k\dvtx \Omega\to\cE^{[k]}$ by
\[
\pi_k (x_1x_2\cdots, <^\N) = \bigl(x_1\cdots x_k, <^\N_{[k]}\bigr)
\]
(in line with our general notation, $<^\N_{[k]}$ denotes the restriction\vspace*{1pt}
of the order $<\N$ on the ground-set $\N$ to the subset $[k]$). In other
words, $\pi_k$ is the ``restriction'' of $\omega= (x_1x_2\cdots,<^\N
)$ to
its first $k$ entries. Thus, the sequence~%
$\pi_0(\omega)$, $\pi_1(\omega),\ldots$ is the sequence of states
corresponding to the outcome $\omega$. The map~$\pi_k$ is then seen as
the natural projection on to the $k$th state (and so in this case on to
all the first $k$ states) in the sequence.

Given an element $\omega= (x_1x_2\cdots, <^\N)$ of $\Omega$, we can
derive a countably infinite subset $X = \{x_1, x_2, \ldots\}$ of $[0,1]$,
together with a poset $P=(X,<)$ on~$X$, where $x_i < x_j$ if and
only if $i<^\N j$, and a natural extension $x_1x_2 \cdots$ of~$P$.
Conversely, such a triple $(X,<_P,x_1x_2 \cdots)$ determines
$\omega\in\Omega$ uniquely. The sequence $(P_k)$ of finite posets can
be obtained from $P$ by setting \mbox{$P_k=P_{X_k}$}, the restriction of $P$ to
$X_k = \{ x_1, \ldots, x_k\}$, for each $k$.

We need some notation for functions on $\Omega$, that is, random elements
on our probability space; where possible, for an object denoted by a
Roman letter, we will use the Greek version of the letter to denote the
corresponding random element. Thus, we will denote by $\xi_k$ the random
$k$th coordinate, that is, the element in $[0,1]$ with
$\xi_k(\omega) = \xi_k(x_1x_2\cdots, <^\N) = x_k$. We shall use~$\Xi_k$
to denote the random set $\{ \xi_1, \ldots, \xi_k\}$, and $\Xi$ to denote
the random set $\{ \xi_1, \xi_2, \ldots\}$. We use $\Delta_k$ to
denote the random element taking values in the set of subsets of $[k]$
with $\Delta_k(\omega) = D(k)$, the down-set of elements below~$k$ in
$<^\N$. Finally, we will use $\prec^\N$ and $\prec^{[k]}$ to denote the
partial-order valued random elements with $\prec^\N(\omega) = <^\N$ and\vspace*{1pt}
$\prec^{[k]}(\omega) = <^\N_{[k]}$, and $\Pi$ and $\Pi_k$ to
denote the
posets induced on the random sets $\Xi$ and\vspace*{1pt} $\Xi_k$, respectively, by the
random order $\prec^\N$ and $\prec^{[k]}$, respectively; in other
words, $\Pi= (\Xi, \prec)$ and $\Pi_k = (\Xi_k,\prec)$, where
$\xi_i \prec\xi_j$ if and only if $i \prec^\N j$ or $i \prec^{[k]} j$.

Let $\cB$ denote the family of Borel subsets of $[0,1]$. For $k\in\N$,
sets $B_1, \ldots, B_k$ in $\cB$, and $<^{[k]}$ a partial order on $[k]$,
define $(B_1\cdots B_k,<^{[k]})$ to be the set of pairs
$(x_1\cdots x_k,<^{[k]})$ in $\cE^{[k]}$ with $x_i \in B_i$ for each $i$.
Now define
\[
E\bigl(B_1\cdots B_k,<^{[k]}\bigr) = \pi_k^{-1}\bigl(B_1\cdots B_k,<^{[k]}\bigr).
\]
This subset $E(B_1\cdots B_k,<^{[k]})$ of $\Omega$ is to be thought of as
the event that $\xi_i \in B_i$ for $i=1, \ldots, k$, and that $\prec
^\N$
has $<^{[k]}$ as its restriction to $[k]$. An event of
this form will be called a \textit{basic event}.

For each $k$, we now define $\cF_k$ to be the $\sigma$-field
generated by
the sets $E(B_1\cdots B_k,<^{[k]})$, and we note that $\cF_k \subseteq\cF_{k+1}$.
Clearly, the family of events $E(B_1\cdots B_k,<^{[k]})$ determines,
and is determined by, the first $k$ states of the causet process, so
the $\cF_k$ form the natural filtration for our process. We then take
$\cF= \sigma(\bigcup_{k=1}^{\infty} \cF_k)$. A causet process thus
gives rise to a probability measure $\mu$ on~$\cF$, that we will call a
\textit{causet measure}.

We remark that $\Omega$ can be identified formally with a subspace of the
compact space $[0,1]^\N\times2^\N$, with the product topology (and the
standard topology on $[0,1]$). Here, we take an enumeration
$s\dvtx \N\to\N\times\N$ of the set of pairs $(i,j)$ of positive integers
with $i<j$, and then encode a suborder $<^{\N}$ of $\N$ as a function
$q\dvtx \N \to\{0,1\}$ by setting $q(s^{-1}(i,j)) = 1$ if and only if
$i<^{\N} j$. The topological space $[0,1]^\N\times2^\N$ is metrisable,
for instance by the metric
\[
d((\mathbf{a,c}),(\mathbf{b,d})) = \sum_i 2^{-i} (|a_i - b_i| + |c_i - d_i|).
\]
The requirement that $<^\N$ be a partial order translates
to: $q(s^{-1}(i,j))+q(s^{-1}(j, k))-q(s^{-1}(i,k)) \le1$ for each $i<j<k$.
The subspace of $[0,1]^\N\times2^\N$ satisfying these constraints is
therefore closed, and hence compact.

In this representation of $\Omega$ as a product space, the $\sigma$-fields
$\cF_k$ contain all finite-dimensional sets. By separability, every open
set is a countable union of sets in $\bigcup_{k=1}^{\infty} \cF_k$, so the
product $\sigma$-field $\cF$ is the Borel $\sigma$-field on $\Omega
$ (see,
e.g., Remark~4.A3 in~\cite{Georgii} or the discussion of product
spaces in Chapter 3 in~\cite{EK}), so our causet measures will be Borel
measures. As $\Omega$ is a closed subset of a complete and separable
metric space, $\Omega$ itself is also complete and separable.

As we have already indicated, we shall treat the concepts of causet
measure and causet process almost interchangeably. Let us spell out why
we may do this.\vspace*{1pt}

The family of basic events $E(B_1\cdots B_k,<^{[k]})$ forms a
\textit{separating class}, that is, any two probability measures that agree
on all basic events are equal: see, for example, Proposition~4.6 in
Chapter~3 of~\cite{EK} or Example~1.2 in~\cite{billingsley}. Thus, to
specify a causet measure $\mu$ on $(\Omega,\cF)$, it suffices to specify
the probabilities $\mu(E(B_1\cdots B_k,<^{[k]}))$ in a consistent way.\vspace*{1pt}
Indeed, as mentioned earlier, the restriction $\mu_k =\mu\pi_k^{-1}$ of
$\mu$ to $\cF_k$ specifies the evolution of the process through the first
$k$ steps; the measures $\mu_k$ are the finite-dimensional distributions
of the process, and they determine the distribution of the process---see
Proposition 3.2 in~\cite{kallenberg} or Theorem~1.1 in~Chapter~4
of~\cite{EK}, or Example~1.2 in~\cite{billingsley}.

Conversely, suppose we are given the causet process as a transition
function $P(\cdot, \cdot)$, that is:
\begin{itemize}[(ii)]
\item[(i)] for each state $(x_1\cdots x_k,<^{[k]})$ in $\cE^{[k]}$,
$P((x_1\cdots x_k,<^{[k]}), \cdot)$ [the transition probability
from the state $(x_1\cdots x_k,<^{[k]})$] is a probability measure on
$\cE^{[k + 1]}$,
\item[(ii)] for every $k\in\N$, every $B_1, \ldots, B_{k+1} \in\cB$,
and every\vspace*{1pt} suborder $<^{[k + 1]}$ of $[k + 1]$,
$P(\cdot, (B_1\cdots B_{k+1},<^{[k + 1]}))$ is a
Borel-measurable function on $\cE^k$.
\end{itemize}
Then the probabilities $\mu(E(B_1\cdots B_k,<^{[k]}))$ can be derived
as integrals of products of evaluations of the transition function.
See Chapter 4 of Ethier and Kurtz~\cite{EK} for details.

One feature of our model that we have not built in to the space
$(\Omega,\cF)$ is the requirement that the labels on elements be
distinct: $\xi_i \not= \xi_j$ for each $i\not=j$. Indeed, it is
convenient to include elements with repeated labels in our sample space
$\Omega$, for instance, so that the space is compact. However, as we are
interested in processes that generate labeled causal sets, we do demand
that the transitions of a causet process are such that the probability of
choosing any element more than once is 0:
$\mu(\{\omega\dvtx \exists i\not= j, \xi_i(\omega)=\xi_j(\omega) \})=0$.

\section{Order-invariant processes and measures} \label{sec:oip}

Causet processes, as defined above, are very general in nature. We are
principally interested in those satisfying the property of
\textit{order-invariance}, which we shall define shortly.

When we consider an element $\omega= (x_1x_2\cdots, <^\N)$ of
$\Omega$,
the real object of interest is the derived causal set $P = \Pi(\omega)$,
with ground-set $X = \Xi(\omega) = \{x_1, x_2, \ldots\}$. Suppose that
$x_{\lambda(1)}x_{\lambda(2)} \cdots$ is another natural extension
of $P$;
this means exactly that the permutation $\lambda$ of $\N$ is a natural
extension of~$<^\N$. There is just one suborder, which we shall denote
$\lambda[<^\N]$, of $\N$ with the property that\vspace*{1pt}
$(x_{\lambda(1)}x_{\lambda(2)} \cdots, \lambda[<^\N])$ induces
$P$. To
specify this order, note that we require $i(\lambda[<^\N])j$ if and only
if $x_{\lambda(i)} < x_{\lambda(j)}$ in $P$, which is equivalent to
$\lambda(i) <^\N\lambda(j)$.

Accordingly, for any $\omega= (x_1x_2\cdots, <^\N) \in\Omega$, and any
natural extension $\lambda$ of $<^\N$, we define the order $\lambda
[<^\N]$
by: $i(\lambda[<^\N])j$ if and only if $\lambda(i) <^\N\lambda(j)$.
We also define
$\lambda[\omega] = (x_{\lambda(1)}x_{\lambda(2)} \cdots, \lambda
[<^\N])$.
As we have seen, the elements $\omega$ and $\lambda[\omega]$ of
$\Omega$
give rise to the same poset $P$: in other words
$\Pi(\omega) = \Pi(\lambda[\omega])$.

With this definition, the permutation $\lambda$ of $\N$ acts on a subset
of $\Omega$. Our definition of order-invariance will demand, roughly,
that, whenever $\lambda$ is a~permutation of $\N$ that fixes all but
finitely many elements, and $\lambda$ acts bijectively on a suitable
subset $E$ of $\Omega$, then $\mu(\lambda[E]) = \mu(E)$.

We define similar notation for the case of finite posets with
ground-set~$[k]$. For a permutation $\lambda$ of~$[k]$, and $<^{[k]}$ a partial
order on $[k]$, let $\lambda[<^{[k]}]$\vspace*{1pt} be the partial order on $[k]$ given
by: $i(\lambda[<^{[k]}])j$ if and only if $\lambda(i) <^{[k]} \lambda(j)$.\vspace*{1pt}
The permutations $\lambda$ of $[k]$ such that $\lambda[<^{[k]}]$ is a
suborder of $[k]$ are exactly the linear extensions of $<^{[k]}$: those
where, whenever $\lambda(i) < \lambda(j)$, $i$ precedes $j$ in the
standard order on $[k]$.

A measure $\mu$ on $(\Omega,\cF)$ is \textit{order-invariant} if, for any
finite sequence $B_1,\ldots,\break B_k$ of sets in $\cB$, any suborder $<^{[k]}$
of $[k]$, and any linear extension $\lambda$ of $<^{[k]}$, we have
%
\begin{equation} \label{order-invariance}
\mu\bigl(E\bigl(B_1\cdots B_k,<^{[k]}\bigr)\bigr) =
\mu\bigl(E\bigl(B_{\lambda(1)}\cdots B_{\lambda(k)},\lambda\bigl[<^{[k]}\bigr]\bigr)\bigr).
\end{equation}

To check that a process or measure is order-invariant, it is enough to
verify condition~(\ref{order-invariance}) for those $\lambda$ transposing
two adjacent incomparable elements. This is a consequence of the (easy)
fact that, given two linear extensions of a finite partial order, it is
possible to step from one to the other via a~sequence of transpositions of
adjacent incomparable elements: a proof of this is sketched in Example~\ref{ex1}.

In the case where the $B_i$ are singleton sets, (\ref{order-invariance})
says that the probability of a state $(x_1\cdots x_k, <^{[k]})$ depends
only on the set $X_k =\{ x_1,x_2, \ldots, x_k\}$ of elements, and the
partial order $P_k$ induced on $X_k$ by $<^{[k]}$, and not on the order
in which the elements of $X_k$ were generated. For instance, in
Example~\ref{ex1}, where the causet measure is prescribed by the probabilities of
single states, this can be taken as the definition of order-invariance,
which is exactly what we did in the \hyperref[sec:intro]{Introduction}. More typically, the
probability of any single state $(x_1\cdots x_k, <^{[k]})$ will be~0, so
the definition we used in Example~\ref{ex1} will not suffice.

A causet process whose distribution is given by an order-invariant
measure~$\mu$ on $(\Omega, \cF)$ is said to be an
\textit{order-invariant causet process}. As we saw at the end of the previous
section, we can talk about order-invariant measures and (distributions of)
order-invariant processes interchangeably.\vspace*{1pt}

As an example, suppose $k=3$ and $<^{[k]}$ has only one related pair,
$1 <^{[3]} 2$. Consider the linear extension $\lambda$ given by:
$\lambda(1)=3$, $\lambda(2)=1$ and $\lambda(3)=2$. Then
$2 (\lambda[<^{[3]}]) 3$ is the only related pair in $\lambda[<^{[3]}]$,
and this instance of the condition of order-invariance is that
\[
\mu\bigl(E\bigl(ABC,<^{[3]}\bigr)\bigr) = \mu\bigl(E\bigl(CAB,\lambda\bigl[<^{[3]}\bigr]\bigr)\bigr)
\]
for any $A,B,C \in\cB$. (Think of $A$, $B$ and $C$ as disjoint for
convenience.) On both sides the restriction is that the element in $A$
is below the element in~$B$ in the partial order $\Pi_3$, while the
element in $C$ is incomparable to both. The order-invariance condition
tells us that, conditioned on the event ``after three steps, we have an
element in $A$ below an element in $B$, and an element in $C$ incomparable
to both,'' each possible order of generation of the three elements is
equally likely. In this case, the possible orders of generation are just
the ones in which the element of $A$ precedes the element in $B$: besides
the two orders above, the only other possible order of generation is
$ACB$. The three orders correspond to the three linear extensions of the
poset $Q$ with three elements labeled $A$, $B$ and $C$, with $A$ below
$B$.

Another related condition we can impose on a causet process is that
transitions \textit{out of} a state depend only on the set of elements
generated and the partial order induced on them. Specifically, we say
that a causet process (or associated measure) is \textit{order-Markov} if we
always have
\[
\frac{\mu(E(B_1\cdots B_kB_{k+1},<^{[k + 1]}))}
{\mu(E(B_1\cdots B_k,<^{[k]}))} =
\frac{\mu(E(B_{\lambda(1)}\cdots B_{\lambda(k)}B_{k+1},
\lambda'[<^{[k + 1]}]))}
{\mu(E(B_{\lambda(1)}\cdots B_{\lambda(k)},\lambda[<^{[k]}]))},
\]
whenever either denominator is non-zero, where $\lambda'$ is the linear
extension of $<^{[k + 1]}$ derived from a linear extension $\lambda$ of
$<^{[k]}$ by fixing $k + 1$. We see immediately that, if a causet
process is order-invariant, then it is order-Markov, as the two numerators
and the two denominators above are both equal.

The converse is far from true: order-invariance is much stronger than the
order-Markov condition. One way to see this is to observe that, if we
impose only the order-Markov condition, then the transition laws out of
states with one element need bear no relation to the transition law out of
the initial ``empty'' state: if we demand order-invariance, then these are
connected via equation~(\ref{order-invariance}) in cases where $<^{[2]}$
is the two-element antichain.

However, if we know that a causet process is order-Markov, then to prove
order-invariance it is enough to check condition~(\ref{order-invariance})
for the permutation $\lambda$ exchanging the \textit{last} two elements,
whenever these are incomparable. To see this, let $\lambda^{(i)}$ denote
the permutation of any $[k]$, with $k>i$, exchanging $i$ and $i+1$ and
leaving all other elements fixed. Suppose that the causet measure~$\mu$
satisfies
\[
\mu\bigl(E\bigl(B_1\cdots B_{j-2}B_jB_{j-1}, \lambda^{(j-1)}\bigl[<^{[j]}\bigr]\bigr)\bigr) =
\mu\bigl(E\bigl(B_1\cdots B_{j-2}B_{j-1}B_j, <^{[j]}\bigr)\bigr)
\]
for every sequence $B_1,\ldots,B_j$ of Borel sets, and every suborder
$<^{[j]}$ of $[j]$ in which $j-1$ and $j$ are incomparable. Now if $\mu$
is order-Markov, we can use this condition inductively to deduce that
\begin{eqnarray*}
&&\mu\bigl(E\bigl(B_1\cdots B_{j-2}B_jB_{j-1}B_{j+1}\cdots B_k,
\lambda^{(j-1)}\bigl[<^{[k]}\bigr]\bigr)\bigr) \\
&&\qquad =
\mu\bigl(E\bigl(B_1\cdots B_{j-2}B_{j-1}B_jB_{j+1}\cdots B_k, <^{[k]}\bigr)\bigr)
\end{eqnarray*}
for every $k \ge j$, every sequence $B_1,\ldots,B_k$ of Borel sets, and
every suborder $<^{[k]}$ of $[k]$ in which $j-1$ and $j$ are incomparable.
This is exactly condition~(\ref{order-invariance}) for $\lambda^{(j-1)}$.
As we saw earlier, we can now deduce that $\mu$ is order-invariant.

Given two probability measures $\mu_1$ and $\mu_2$ on $(\Omega,\cF)$,
a \textit{convex combination} of $\mu_1$ and $\mu_2$ is a probability
measure of the form $r \mu_1 + (1-r) \mu_2$, for $r \in(0,1)$. It is
immediate from the definition that, if $\mu_1$ and $\mu_2$ are
order-invariant, then so is any convex combination. Thus, the family
of order-invariant measures is a convex subset of the set of all causet
measures.

More generally, given a probability space $(W,\cG,\rho)$ whose elements
are causet measures $\mu_\omega$, the \textit{mixture} defined by this space
is the probability measure $\mu$ defined by
\[
\mu(\cdot) = \int_W \mu_\omega(\cdot)  \, d\rho(\mu_\omega).
\]
It is again immediate from the definition that, if all the $\mu_\omega
$ are
order-invariant, then so is the mixture $\mu$.

We now give an alternative characterization of order-invariance. For this,
we need to introduce some notation that will feature prominently in the
subsequent sections as well.

For $<^\N$ a suborder of $\N$, $k \in\N$, and $\lambda$ a linear\vspace*{-1pt}
extension of $<^\N_{[k]}$, we define~$\lambda^+$ to be the natural
extension of $<^\N$ defined by
\[
\lambda^+(i) =
\cases{
\lambda(i), &\quad  $i \le k$, \cr
i, &\quad $ i > k$.
}
\]
So $\lambda^+[<^\N]$ is the partial order on $\N$ obtained from
$<^\N$
obtained by permuting the first $k$ labels according to $\lambda$.

For a fixed $\omega= (x_1x_2\cdots, <^\N) \in\Omega$, $k \in\N$, and
$E \in\cF$, we define $\nu^k(E)(\omega)$ as the proportion of linear
extensions $\lambda$ of $<^\N_{[k]}$ such that $\lambda^+[\omega]$
is in
$E$.

For any $\omega\in\Omega$ and $k$, the function $\nu^k(\cdot
)(\omega)$
gives a probability measure on $\cF$, namely the uniform measure on
elements $\lambda^+[\omega]$ of $\Omega$, where $\lambda$ runs over linear
extensions of $<^\N_{[k]}$. This measure can naturally be identified with
the uniform measure on linear extensions of $\Pi_k(\omega)$.

Let us look more closely at $\nu^k(E(B_1 \cdots B_n, <^{[n]}))(\omega)$,
where $\omega= (x_1x_2 \cdots,\break <^\N)$, the $B_i$ are Borel sets in
$[0,1]$, $<^{[n]}$ is a suborder of $[n]$, and $k\le n$. In order for
this quantity to be non-zero, it is necessary for $<^\N_{[k]}$ to be
isomorphic to $<^{[n]}_{[k]}$.\vspace*{-2pt}

Suppose the poset $<^{[n]}_{[k]}$ has $\ell$ linear extensions,\vspace*{1pt}
$\lambda_1, \ldots, \lambda_{\ell}$. Each $\lambda_i$ induces a linear
extension $\lambda'_i$ on $<^{[n]}$, obtained by fixing the elements
$k+1, \ldots, n$.

If now $\nu^k(E(B_1 \cdots B_n, <^{[n]})(\omega)$ is non-zero, then
$<^\N_{[k]}$ also has $\ell$ linear\vspace*{-1pt} extensions, and
$\nu^k(E(B_1 \cdots B_n, <^{[n]}))(\omega)$ is equal to $\frac
{1}{\ell}$
times the number of them that, applied to $\omega$, yield an element
in $E(B_1 \cdots B_n, <^{[n]})$. If, for some\vspace*{1pt} linear extension
$\rho$ of $<^\N_{[k]}$, $\rho^+[\omega]$ is in the set
$E(B_1 \cdots B_n, <^{[n]})$, then we can reverse the process: for one of
the linear extensions $\lambda_i$, $\lambda^+_i[\rho^+[\omega]] =
\omega$.
In other words, $\rho$ has to be the inverse of one of the $\lambda_i$,
and the set of $\omega$ for which $\rho^+[\omega]$ is in
$E(B_1 \cdots B_n, <^{[n]})$ is the set
$E(B_{\lambda'_i(1)} \cdots B_{\lambda'_i(n)}, \lambda'_i[<^{[n]}])$.

It now follows that
%
\begin{equation} \label{eq:sum}
\nu^k\bigl(E\bigl(B_1\cdots B_n, <^{[n]}\bigr)\bigr)(\omega) = \frac{1}{\ell}
\sum_{i=1}^{\ell} \bone\bigl(E\bigl(B_{\lambda'_i(1)} \cdots B_{\lambda'_i(n)},
\lambda'_i\bigl[<^{[n]}\bigr]\bigr)\bigr)(\omega).
\end{equation}

\begin{lemma} \label{lem:lambda-k-invariant}
For any $k \in\N$, any Borel sets $B_1, \ldots, B_k$, any suborder $<^{[k]}$
of $[k]$, any linear extension $\lambda$ of $<^{[k]}$, and any
$\omega\in\Omega$, we have
\[
\nu^k\bigl(E\bigl(B_1 \cdots B_k, <^{[k]}\bigr)\bigr)(\omega)=
\nu^k\bigl(E\bigl(B_{\lambda(1)}\cdots B_{\lambda(k)}, \lambda
\bigl[<^{[k]}\bigr]\bigr)\bigr)(\omega).
\]
\end{lemma}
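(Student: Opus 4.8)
The plan is to use the explicit formula~(\ref{eq:sum}) for $\nu^k$ applied to a basic event, and to show that permuting the first $k$ coordinates of the defining data by a linear extension $\lambda$ merely re-indexes the sum. Concretely, fix $\omega = (x_1x_2\cdots, <^\N)$, Borel sets $B_1,\dots,B_k$, a suborder $<^{[k]}$ of $[k]$, and a linear extension $\lambda$ of $<^{[k]}$. Applying~(\ref{eq:sum}) in the case $n=k$ to the event $E(B_1\cdots B_k,<^{[k]})$, we get that $\nu^k(E(B_1\cdots B_k,<^{[k]}))(\omega)$ equals $\tfrac1\ell$ times the number of linear extensions $\mu_1,\dots,\mu_\ell$ of $<^{[k]}$ (here $\lambda'_i = \mu_i$ since $n=k$) with $\mu_i^+[\omega] \in E(B_1\cdots B_k,<^{[k]})$; equivalently, $\tfrac1\ell \sum_{i=1}^\ell \bone(E(B_{\mu_i(1)}\cdots B_{\mu_i(k)},\mu_i[<^{[k]}]))(\omega)$. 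Applying~(\ref{eq:sum}) instead to the event $E(B_{\lambda(1)}\cdots B_{\lambda(k)},\lambda[<^{[k]}])$, we get $\tfrac1{\ell'}\sum_{j}\bone(E(B_{\lambda(\sigma_j(1))}\cdots B_{\lambda(\sigma_j(k))},(\lambda\sigma_j)[<^{[k]}]))(\omega)$, where $\sigma_1,\dots,\sigma_{\ell'}$ are the linear extensions of $\lambda[<^{[k]}]$.

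First I would record the two elementary facts about the map $\sigma \mapsto \lambda\sigma$ (composition as permutations of $[k]$): (a) it is a bijection from the set of linear extensions of $\lambda[<^{[k]}]$ to the set of linear extensions of $<^{[k]}$, since $\sigma$ is a linear extension of $\lambda[<^{[k]}]$ exactly when $\lambda\sigma$ is a linear extension of $<^{[k]}$ (this is immediate from the definition of $\lambda[<^{[k]}]$ as $i(\lambda[<^{[k]}])j \Leftrightarrow \lambda(i)<^{[k]}\lambda(j)$); and (b) consequently $\ell=\ell'$, and as $\sigma_j$ ranges over the linear extensions of $\lambda[<^{[k]}]$, the composite $\mu := \lambda\sigma_j$ ranges over all linear extensions of $<^{[k]}$. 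Next I would check that the summand is invariant under this re-indexing: for $\mu = \lambda\sigma$, the string $B_{\lambda(\sigma(1))}\cdots B_{\lambda(\sigma(k))}$ is exactly $B_{\mu(1)}\cdots B_{\mu(k)}$, and the order $(\lambda\sigma)[<^{[k]}]$ equals $\mu[<^{[k]}]$ by the associativity identity $(\lambda\sigma)[<^{[k]}] = \sigma[\lambda[<^{[k]}]]$ applied with the roles unwound — more directly, $i((\lambda\sigma)[<^{[k]}])j \Leftrightarrow \lambda\sigma(i)<^{[k]}\lambda\sigma(j) \Leftrightarrow i(\mu[<^{[k]}])j$. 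Hence the two sums are term-by-term equal after the bijective change of index, and the two values of $\nu^k$ agree for every $\omega$.

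There is one subtlety to treat carefully rather than routinely: formula~(\ref{eq:sum}) was derived under a nonvanishing hypothesis (it is stated that ``$\nu^k(E(B_1\cdots B_n,<^{[n]}))(\omega)$ is non-zero'' implies $<^\N_{[k]}$ has $\ell$ linear extensions, etc.), so I should confirm the identity also holds in the degenerate case where both sides are zero. This is in fact automatic: by the remark preceding~(\ref{eq:sum}), $\nu^k(E(B_1\cdots B_k,<^{[k]}))(\omega)$ can be nonzero only if $<^\N_{[k]}$ is isomorphic to $<^{[k]}$, and $<^\N_{[k]}$ is isomorphic to $<^{[k]}$ if and only if it is isomorphic to $\lambda[<^{[k]}]$ (the map $\lambda$ itself is such an isomorphism $<^{[k]}\to\lambda[<^{[k]}]$ up to relabelling, so the two suborders are isomorphic posets); hence the left side vanishes precisely when the right side does. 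Alternatively — and this is cleaner — I would simply invoke the definition of $\nu^k$ directly: $\nu^k(E)(\omega)$ is the proportion of linear extensions $\rho$ of $<^\N_{[k]}$ with $\rho^+[\omega]\in E$, and the argument in the displayed paragraph preceding~(\ref{eq:sum}) shows that $\rho^+[\omega] \in E(B_1\cdots B_k,<^{[k]})$ holds for the same set of $\rho$ (indexed via the inverses of the $\mu_i$) as $\rho^+[\omega]\in E(B_{\lambda(1)}\cdots B_{\lambda(k)},\lambda[<^{[k]}])$, after composing with the fixed permutation $\lambda$.

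The main obstacle is purely bookkeeping: keeping straight the three layers of permutation composition — the linear extensions of $<^{[k]}$, those of $\lambda[<^{[k]}]$, and the conjugating map $\lambda$ — and making sure the identity $(\lambda\sigma)[<^{[k]}] = \mu[<^{[k]}]$ is used with the correct orientation, since $\lambda[\cdot]$ is contravariant in the sense that composition inside is composition of maps on $[k]$. Once the bijection $\sigma\mapsto\lambda\sigma$ between the two index sets is set up correctly, the proof is a one-line re-indexing of the sum~(\ref{eq:sum}), valid for every $\omega$ with no measure-theoretic input. I expect this lemma to then be the workhorse for deducing (in a later section) that order-invariance of $\mu$ is equivalent to $\mu$ being a fixed point of the averaging operators $\mu \mapsto \int \nu^k(\cdot)(\omega)\,d\mu(\omega)$.
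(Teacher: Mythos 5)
Your argument is correct, and it reaches the same underlying fact as the paper by a slightly different packaging. The paper proves the lemma directly from the definition of $\nu^k$: it observes that $\rho^+[\omega]\in E(B_1\cdots B_k,<^{[k]})$ if and only if the composite of $\rho$ with $\lambda$ lands in $E(B_{\lambda(1)}\cdots B_{\lambda(k)},\lambda[<^{[k]}])$, so composing with $\lambda$ bijects the relevant linear extensions of $\prec^\N_{[k]}$ and the two proportions coincide -- this is exactly the ``cleaner alternative'' you sketch in your closing paragraph. Your primary route instead applies the identity (\ref{eq:sum}) with $n=k$ to both basic events and re-indexes the resulting finite sums via $\sigma\mapsto\lambda\circ\sigma$; the two facts you isolate -- that $\sigma$ is a linear extension of $\lambda[<^{[k]}]$ precisely when $\lambda\circ\sigma$ is a linear extension of $<^{[k]}$ (hence $\ell=\ell'$), and that $\sigma[\lambda[<^{[k]}]]=(\lambda\circ\sigma)[<^{[k]}]$ -- are both verified correctly and with the right orientation of composition. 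What your route buys is that the bijection lives on the linear extensions of $<^{[k]}$ and $\lambda[<^{[k]}]$ themselves, independently of $\omega$, at the price of leaning on (\ref{eq:sum}); the paper's route works with the $\omega$-dependent order $\prec^\N_{[k]}$ and avoids (\ref{eq:sum}) altogether. Your worry about the degenerate case is legitimate but ultimately unnecessary: the correspondence derived just before (\ref{eq:sum}) in the paper works in both directions (if $\omega$ lies in one of the events $E(B_{\lambda'_i(1)}\cdots,\lambda'_i[<^{[n]}])$ then $\lambda_i^{-1}$ is a linear extension of $<^\N_{[k]}$ witnessing a nonzero left-hand side), so (\ref{eq:sum}) holds unconditionally, with both sides vanishing when $<^\N_{[k]}$ is not isomorphic to $<^{[k]}$; your observation that $<^{[k]}$ and $\lambda[<^{[k]}]$ are isomorphic then settles the matter, as you say. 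Your expectation about the lemma's role is also accurate: it is used in the proof of Theorem~\ref{thm:DLR}.
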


\begin{pf}
$\!\!$For $\omega= (x_1x_2\cdots, \prec^\N)\in\Omega$, the quantity\vspace*{1pt}
\mbox{$\nu^k(E(B_1 \cdots B_k, <^{[k]}))(\omega)$} is the proportion of linear
extensions $\rho$ of $\prec^\N_{[k]}$ such that
$\rho^+[\omega] \in E(B_1\cdots B_k,\break <^{[k]})$. We see that
$\rho^+[\omega] \in E(B_1\cdots B_k, <^{[k]})$ if and only if
$(\lambda\rho)^+[\omega] \in\break
E(B_{\lambda(1)}\times \cdots B_{\lambda(k)}, \lambda[<^{[k]}])$.
Therefore, as required, we have
\[
\nu^k\bigl(E\bigl(B_1 \cdots B_k, <^{[k]}\bigr)\bigr)(\omega)=
\nu^k\bigl(E\bigl(B_{\lambda(1)}\cdots B_{\lambda(k)}, \lambda
\bigl[<^{[k]}\bigr]\bigr)\bigr)(\omega).
\]
\upqed
\end{pf}

We are now in a position to establish our alternative characterization of
order-invariance.

\begin{theorem} \label{thm:DLR}
Let $\mu$ be a causet measure. Then $\mu$ is order-invariant if and only
if
\[
\mu(E) = \E_\mu\nu^k(E)
\]
for every $E\in\cF$ and every $k\in\N$.
\end{theorem}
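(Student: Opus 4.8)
The plan is to prove the two implications separately; the forward implication (order-invariance gives the identity) carries the content, while the reverse follows in a line from Lemma~\ref{lem:lambda-k-invariant}. For the reverse, suppose $\mu(E)=\E_\mu\nu^k(E)$ for every $E\in\cF$ and $k\in\N$. Fix $k$, Borel sets $B_1,\dots,B_k$, a suborder $<^{[k]}$ of $[k]$, and a linear extension $\lambda$ of $<^{[k]}$; put $E=E(B_1\cdots B_k,<^{[k]})$ and $E'=E(B_{\lambda(1)}\cdots B_{\lambda(k)},\lambda[<^{[k]}])$. Lemma~\ref{lem:lambda-k-invariant} gives $\nu^k(E)(\omega)=\nu^k(E')(\omega)$ for every $\omega$, so $\E_\mu\nu^k(E)=\E_\mu\nu^k(E')$; applying the hypothesis (with this same $k$) to $E$ and to $E'$ yields $\mu(E)=\mu(E')$, which is exactly condition~(\ref{order-invariance}).

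For the forward implication, assume $\mu$ is order-invariant and fix $k\in\N$. First I would verify that $E\mapsto\E_\mu\nu^k(E)$ is a probability measure on $(\Omega,\cF)$. For each $\omega$ the set function $\nu^k(\cdot)(\omega)$ is a probability measure, as already noted: it is the uniform average of the point masses $\delta_{\lambda^+[\omega]}$ over the linear extensions $\lambda$ of $<^\N_{[k]}$. The map $\omega\mapsto\nu^k(E)(\omega)$ is bounded by $1$ and $\cF$-measurable: partitioning $\Omega$ into the finitely many $\cF_k$-measurable pieces on which $<^\N_{[k]}$ takes a fixed value, on each such piece $\nu^k(E)$ is a fixed finite average of functions of the form $\omega\mapsto\bone_E(\lambda_j^+[\omega])$, each a composition of $\bone_E$ with a measurable self-map of $\Omega$. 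Countable additivity of $\E_\mu\nu^k$ then follows from that of the measures $\nu^k(\cdot)(\omega)$ together with monotone convergence. Since the basic events $E(B_1\cdots B_n,<^{[n]})$ form a separating class (as recorded in Section~\ref{sec:csp}), it now suffices to prove $\mu(E)=\E_\mu\nu^k(E)$ for every basic event $E$.

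For a basic event I would first reduce to the case $n\ge k$: a basic event on $n<k$ coordinates is a finite disjoint union of basic events on exactly $k$ coordinates --- pad $B_{n+1}=\dots=B_k=[0,1]$ and split over the suborders of $[k]$ whose restriction to $[n]$ is the given order --- and both $\mu$ and $\E_\mu\nu^k$ are (finitely) additive. So take $E=E(B_1\cdots B_n,<^{[n]})$ with $n\ge k$. Equation~(\ref{eq:sum}) expresses $\nu^k(E)$ as the average, over the $\ell$ linear extensions $\lambda_1,\dots,\lambda_\ell$ of $<^{[n]}_{[k]}$, of the indicator functions of the basic events $E(B_{\lambda'_i(1)}\cdots B_{\lambda'_i(n)},\lambda'_i[<^{[n]}])$, where each $\lambda'_i$ is a linear extension of $<^{[n]}$. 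Taking $\E_\mu$ and applying order-invariance~(\ref{order-invariance}) to each $\lambda'_i$ shows that every one of the $\ell$ summands equals $\mu(E)$, whence $\E_\mu\nu^k(E)=\mu(E)$, as needed. The only delicate points are the measure-theoretic bookkeeping --- confirming $\E_\mu\nu^k$ is genuinely a probability measure, and the reduction to $n\ge k$ --- so I anticipate no real obstacle: the substance has already been done in Lemma~\ref{lem:lambda-k-invariant} and in the derivation of~(\ref{eq:sum}).
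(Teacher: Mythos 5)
Your proposal is correct and follows essentially the same route as the paper: the reverse implication via Lemma~\ref{lem:lambda-k-invariant} and taking expectations, and the forward implication by averaging over the linear extensions as in~(\ref{eq:sum}), applying order-invariance to each term $\lambda'_i$, and then extending from basic events to all of $\cF$ via the separating-class argument. The extra bookkeeping you supply --- measurability of $\omega\mapsto\nu^k(E)(\omega)$, countable additivity of $\E_\mu\nu^k$, and the padding reduction of basic events with $n<k$ to ones with $n\ge k$ --- merely makes explicit details the paper leaves implicit, and is sound.
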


This is an analogue of the DLR equations from statistical physics (see,
e.g., Bovier~\cite{Bovier}) or Section 1.2 in~\cite{Georgii},
about conditional probabilities. It corresponds to specifying a
\textit{boundary condition} outside a \textit{finite volume}---here this means
that we condition on all the information about $\omega$ except the order
in which the first $k$ elements are generated, and then realizing the
\textit{conditional Gibbs measure}, which in our setting is
$\nu^k(\cdot)(\omega)$.

\begin{pf*}{Proof of Theorem \ref{thm:DLR}}
Suppose first that $\mu$ is a causet measure satisfying the condition.
Consider any finite sequence $B_1, \ldots, B_k$ of sets in~$\cB$, any
suborder $<^{[k]}$ of $[k]$, and any linear extension $\lambda$ of
$<^{[k]}$.

By Lemma~\ref{lem:lambda-k-invariant}, we have that
\[
\nu^k\bigl(E\bigl(B_1 \cdots B_k, <^{[k]}\bigr)\bigr)(\omega)=
\nu^k\bigl(E\bigl(B_{\lambda(1)}\cdots B_{\lambda(k)}, \lambda
\bigl[<^{[k]}\bigr]\bigr)\bigr)(\omega)
\]
for each $\omega\in\Omega$. Taking expectations, we have
\[
\E_\mu\nu^k\bigl(E\bigl(B_1 \cdots B_k, <^{[k]}\bigr)\bigr) =
\E_\mu\nu^k\bigl(E\bigl(B_{\lambda(1)}\cdots B_{\lambda(k)}, \lambda\bigl[<^{[k]}\bigr]\bigr)\bigr),
\]
and therefore the given condition implies that
\[
\mu\bigl(E\bigl(B_1 \cdots B_k, <^{[k]}\bigr)\bigr) =
\mu\bigl(E\bigl(B_{\lambda(1)}\cdots B_{\lambda(k)}, \lambda\bigl[<^{[k]}\bigr]\bigr)\bigr)
\]
for each basic event $E(B_1 \cdots B_k, <^{[k]})$, as required for
order-invariance.

Conversely, suppose that $\mu$ is order-invariant, and fix $k \in\N$.
Now consider a basic event $E(B_1\cdots B_n, <^{[n]})$, for $n \ge k$.
Taking expectations in (\ref{eq:sum}), we obtain that
\[
\E_\mu\nu^k\bigl(E\bigl(B_1\cdots B_n, <^{[n]}\bigr)\bigr) = \frac{1}{\ell}
\sum_{i=1}^{\ell} \mu\bigl(E\bigl(B_{\lambda'_i(1)} \cdots B_{\lambda'_i(n)},
\lambda'_i\bigl[<^{[n]}\bigr]\bigr)\bigr),
\]
where, as before, $\lambda'_1, \ldots, \lambda'_\ell$ are the linear\vspace*{-1pt}
extensions of $<^{[n]}_{[k]}$ that fix $k+1, \ldots, n$.
Now, by order-invariance, the sum above is equal to
$\mu(E(B_1 \cdots B_n,\break <^{[n]}))$.

For each fixed $k$, we now have that $\mu(E) = \E_\mu\nu^k(E)$ for all
the basic events $E$; as both $\mu(\cdot)$ and $\E_\mu\nu^k(\cdot
)$ are
measures, and the basic events form a separating class, we have that the
condition holds for all events $E\in\cF$.
\end{pf*}

\section{Examples} \label{sec:tsc}

In this section, we briefly revisit the three examples we introduced in
Section~\ref{sec:intro}, and give one more.

\subsection*{Causet processes on fixed causal sets}
Suppose we are given a fixed causal set $P=(Z,<)$, with $Z \subset[0,1]$.
Recall that $L'(P)$ is the set of all natural extensions of posets $P_Y$,
where $Y$ is an infinite down-set in $P$. In the case where the set
$I(x)$ of elements incomparable to $x$ is finite for all $x$, $L'(P)$ is
equal to the set $L(P)$ of natural extensions of $P$.

A \textit{causet process on $P$} is a process generating a random element
$\lambda$ of $L'(P)$: we think of generating distinct elements
$\lambda(1), \lambda(2), \ldots$ of $Z$ in turn. At each stage, an element
$z \in Z$ is available for selection only if all the elements in $D(z)$
have already been selected [equivalently, at stage $k$, the element $z$
is available for selection if $z$ is minimal in
$P\sm\{\lambda(1), \ldots, \lambda(k-1)\}$].

We can view a causet process on $P$ as a special case of a causet process:
the states that can occur are pairs $(x_1\cdots x_k, <^{[k]})$, where
$x_1\cdots x_k$ is an ordered stem of $P$, and $<^{[k]}$ is the poset
induced from the order $<$ on $Z$: $i<^{[k]}j$ if and only if $x_i < x_j$.
For a transition out of this state, at stage $k + 1$, a~random (not
necessarily uniform) minimal element $\xi_{k+1}$ of
$P\sm\{x_1,\ldots,x_k\}$ is selected, and its down-set is chosen to be the
same as it is in $P$. Example~\ref{ex1} illustrates this.

A causet process on $P$ is order-Markov if the law describing how we
choose a minimal element from $P\sm\{x_1,\ldots,x_k\}$ depends only on the
set $\{x_1,\ldots,x_k\}$. Again, the condition of order-invariance is much
more demanding than the order-Markov condition.

One example is the process considered by Luczak and Winkler~\cite{LW},
which grows, step by step, uniformly random $n$-element subtrees,
containing the root as the unique minimal element, of the complete
$d$-ary tree $T^d$. This process is a causal set process on $T^d$, and is
order-Markov, but calculations on small examples reveal that it is not
order-invariant.

When considering causet processes on a fixed poset $P$, the order on any
set of elements is determined by $P$, so it is natural to drop the order
from the notation, and denote a state simply as $x_1\cdots x_k$, and an
element $\omega$ as $x_1x_2\cdots$.

We study order-invariance on fixed causal sets in more detail in the
companion paper~\cite{BL2}.

We saw one example of a causet process on a fixed causal set in Example~\ref{ex1}.
As a further illustration, we give another example.

\begin{ex}\label{ex4}
Let $P=(Z,<)$ be the disjoint union of two infinite chains
$B\dvtx b_1<b_2<\cdots$ and $C\dvtx c_1<c_2<\cdots$, with every element of $B$
incomparable with every element of $C$. Fix a real parameter
$q \in[0,1]$, and define a~causet process on $P$ as follows. For any
stem $A$ of $P$, there are exactly two minimal elements of $P\sm A$, one
in $B$ and one in $C$: from any state with $X_k=A$, we define the
transition probabilities out of that state by choosing the element in $B$
with probability $q$. Denote the associated probability measure~$\mu_q$:
specifically, if $a_1\cdots a_k$ is any ordered stem of $P$ with
$A=\{a_1,\ldots,a_k\} = \{b_1,\ldots,b_m,c_1,\ldots,c_n\}$, then
$\mu_q(E(a_1\cdots a_k)) = q^m(1-q)^n$. (As mentioned above, we have\vspace*{1.5pt}
dropped the order $<^{[k]}$, which can be derived from $P$, from the
notation.)

It follows that $\mu_q$ is order-invariant for any $q$, since the
expression for $\mu_q(E(a_1\cdots a_k))$ does not depend on the order of
the $a_i$.

The cases $q=0$ and $q=1$ are special. If $q=0$, then elements from $C$
are never chosen, and $\Xi=B$ a.s.; if $q=1$, then $\Xi=C$ a.s.
If $q\in(0,1)$, then $\Xi=B\cup C =Z$ a.s.

More generally, given any probability measure $\rho$ on $[0,1]$,
define a
probability measure $\mu_\rho$ by first choosing a random parameter
$\chi$
according to $\rho$, then sampling according to $\mu_\chi$. Then, for
$a_1\cdots a_k$ is any ordered stem of $P$ with
$A=\{a_1,\ldots,a_k\} = \{b_1,\ldots,b_m,c_1,\ldots,c_n\}$, we have
$\mu_\chi(E(a_1\cdots a_k)) = \E_\rho(\chi^m (1-\chi)^n)$. Again, this
expression is independent of the order of the $a_i$, so $\mu_\rho$ is
order-invariant. (Alternatively, $\mu_\rho$ is a mixture of the
order-invariant measures $\mu_q$, so is also order-invariant.)

This last description includes several apparently different processes.
For instance, consider the following process: having chosen the bottom $n$
elements, $m$ from $B$ and $k=n-m$ from $C$, choose the next element to be
from $B$ with probability $(m+1)/(n+2)$. It is easy to check directly
that this defines an order-invariant process on $P$. The theory of
\textit{P\'olya's Urn} (see, e.g., Exercise~E10.1 in
Williams~\cite{williams}) tells us that the proportion of elements taken
from $B$ in the first $n$ steps converges a.s.\ to some limit $\chi$ as
$n\to\infty$, and that this limit $\chi$ has the uniform
distribution on
$(0,1)$. Indeed, this process has the same finite-dimensional
distributions as the one defined by choosing $\chi$ from the uniform
distribution in advance, then choosing the natural extension according
to~$\mu_\chi$.

This example is covered in more detail in~\cite{BL2}, and from a slightly
different perspective in Kerov~\cite{Kerov}.
\end{ex}

\subsection*{Causet processes with independent labels}
Another special class of causet processes consists of those where, at
every transition, the new random ``label'' $\xi_{k+1}$ in $[0,1]$ is chosen
independently of the random down-set $\Delta_{k+1}$, and of all other
labels, and where the distribution of $\Delta_{k+1}$ itself depends only
on $<^{[k]}$.

In this case, the labels from $[0,1]$ play no essential role, and it is
more natural to think of the elements as unlabeled, and to view a process
as a Markov chain on the set of finite unlabeled causal sets. The csg
models of Rideout and Sorkin~\cite{RS}, which include the random graph
orders in Example~\ref{ex2}, are of this type.

Let us be specific about how to realize the random graph order with
parameter $p$ as an order-invariant causet process. The case $p=0$,
where the random graph order is a.s.\ an antichain, as in Example~\ref{ex2}, is
included in this description.

From a state $(x_1\cdots x_k, <^{[k]})$, we make a transition to a state
$(x_1 \cdots x_k x_{k+1},\break <^{[k+1]})$, where $x_{k+1}$ is chosen uniformly
at random from $[0,1]$, independent of any other choices. We choose
a random subset $\Sigma$ of $[k]$, with each element of $[k]$
appearing in
$\Sigma$ independently with probability~$p$. Now we define the down-set
$D(k+1)$ to be the set of elements $i \in[k]$ with $i\le^{[k]}j$ for some
$j \in\Sigma$.

We showed in Section~\ref{sec:intro} that the probability that the random\vspace*{-1pt}
partial order $\prec_{[k]}$ is equal to a particular suborder
$<^{[k]}$ of
$[k]$ is given by $p^{c(<^{[k]})}(1-p)^{b(<^{[k]})}$, where $c(<^{[k]})$
is the number of covering pairs of $([k],<^{[k]})$, and $b(<^{[k]})$ is
the number of incomparable pairs. Thus, for $B_1, \ldots, B_k$ Borel sets
in $[0,1]$, and $<^{[k]}$ a suborder of $[k]$, we have
\[
\mu\bigl(E\bigl(B_1\cdots B_k,<^{[k]}\bigr)\bigr) =
|B_1|\cdots|B_k| p^{c(<^{[k]})}(1-p)^{b(<^{[k]})},
\]
where $|\cdot|$ denotes Lebesgue measure. The product
$|B_1|\cdots|B_k|$ is independent of the order of the $B_i$, and the
quantity $p^{c(<^{[k]})}(1-p)^{b(<^{[k]})}$ is invariant under
isomorphisms of the poset, so the measure $\mu$ is order-invariant.

The two special cases discussed above are, in a way, two extremes. When
we have a causal set process on a fixed causal set, the label of each
element determines its down-set when it is introduced: in the case of
causet processes with independent labels, the label and the down-set of an
element are independent.\vspace*{25pt}

\section{Invariant measures} \label{sec:invariant}

In this section, we develop some weaker notions of invariance, and show
how these relate to order-invariance. One goal is to show that the family
of order-invariant measures is a closed subset of the family of all
probability measures on $(\Omega, \cF)$ with respect to the topology of
weak convergence.

For $i \in\N$, let $\lambda^{(i)}$ be the permutation of $\N$ exchanging
$i$ and $i + 1$:
\[
\lambda^{(i)}(j) =
\cases{
i+1, &\quad  if $j=i$, \cr
i, &\quad  if $ j=i+1$, \cr
j, &\quad  otherwise.
}
\]\vfill\eject

For $\omega= (x_1 x_2 \cdots, <^{\N}) \in\Omega$, a special case
of a
definition from Section~\ref{sec:oip} is that
\[
\lambda^{(i)} [\omega] = \bigl(x_{\lambda^{(i)}(1)} x_{\lambda
^{(i)}(2)}\cdots,
\lambda^{(i)}[<^\N]\bigr)
\]
whenever $\lambda^{(i)}$ is a natural extension of $<^\N$, that is, whenever
$i$ and $i + 1$ are incomparable in $<^\N$. We now extend
$\lambda^{(i)}$ to a function $\lambda^{(i)}\dvtx \Omega\to\Omega$ by
setting $\lambda^{(i)} [\omega]= \omega$ if the permutation $\lambda
$ is
not a natural extension of $<^\N$, that is, if $i <^\N i + 1$. Note that
each $\lambda^{(i)}$ is a permutation, indeed an involution, on
$\Omega$.

Observe that each $\lambda^{(i)}$ is continuous with respect to the
product topology on $\Omega$, and so is certainly measurable, as $\cF
$ is
the Borel $\sigma$-field with respect to this topology.

For $E\in\cF$, and $i \in\N$, we naturally define
$\lambda^{(i)}(E) = \{ \lambda^{(i)}[\omega] \dvtx \omega\in E\}$. Given
also a causet measure $\mu$, we set
$(\mu\circ\lambda^{(i)})(E) = \mu(\lambda^{(i)}(E))$. It is then
straightforward to check that $\mu\circ\lambda^{(i)}$ is a causet measure
for each $\mu$ and $i$.

\begin{lemma} \label{lem:invt}
For each $i$, a causet measure $\mu$ satisfies
$\mu= \mu\circ\lambda^{(i)}$ if and only if
\[
\mu\bigl(E\bigl(B_1 \cdots B_iB_{i+1} \cdots B_k, <^{[k]}\bigr)\bigr) =
\mu\bigl(E\bigl(B_1 \cdots B_{i+1}B_i \cdots B_k, \lambda^{(i)}\bigl[<^{[k]}\bigr]\bigr)\bigr)
\]
for all $k >i$, all Borel sets $B_1, \ldots, B_k$, and all suborders
$<^{[k]}$ of $[k]$ such that~$i$ and $i+1$ are incomparable.
\end{lemma}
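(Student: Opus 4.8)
The plan is to convert the operator identity $\mu=\mu\circ\lambda^{(i)}$ into a statement about basic events and then invoke the fact (Section~\ref{sec:csp}) that the basic events form a separating class. The technical core is an explicit description of the image $\lambda^{(i)}\bigl(E(B_1\cdots B_k,<^{[k]})\bigr)$ of a basic event, which I would record in two cases under the assumption $k\ge i+1$, so that both $i$ and $i+1$ lie in $[k]$. First, if $i<^{[k]}i+1$, then every $\omega=(x_1x_2\cdots,<^\N)$ in $E(B_1\cdots B_k,<^{[k]})$ has $i<^\N i+1$, so $\lambda^{(i)}[\omega]=\omega$ by definition; hence $\lambda^{(i)}$ fixes this basic event pointwise, and consequently $\lambda^{(i)}\bigl(E(B_1\cdots B_k,<^{[k]})\bigr)=E(B_1\cdots B_k,<^{[k]})$.

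The second, substantive case is that $i$ and $i+1$ are incomparable in $<^{[k]}$, where I claim
\[
\lambda^{(i)}\bigl(E(B_1\cdots B_iB_{i+1}\cdots B_k,<^{[k]})\bigr)=E(B_1\cdots B_{i+1}B_i\cdots B_k,\lambda^{(i)}[<^{[k]}]).
\]
For the inclusion ``$\subseteq$'' I would take $\omega=(x_1x_2\cdots,<^\N)$ in the left-hand set; since $i$ and $i+1$ are then incomparable in $<^\N$, the point $\lambda^{(i)}[\omega]$ has $i$th coordinate $x_{i+1}\in B_{i+1}$, $(i+1)$st coordinate $x_i\in B_i$, all other coordinates unchanged, and, because $\lambda^{(i)}$ permutes $[k]$, the restriction of $\lambda^{(i)}[<^\N]$ to $[k]$ equals $\lambda^{(i)}[<^{[k]}]$. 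The reverse inclusion follows because $\lambda^{(i)}$ is an involution on $\Omega$ and $\lambda^{(i)}[<^{[k]}]$ is again a suborder of $[k]$ in which $i$ and $i+1$ are incomparable, so running the same computation on a point of the right-hand set returns a point of the left-hand set.

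Given this, the forward direction is immediate: if $\mu=\mu\circ\lambda^{(i)}$, then for $k>i$, Borel sets $B_1,\dots,B_k$, and any $<^{[k]}$ with $i,i+1$ incomparable, $\mu\bigl(E(B_1\cdots B_k,<^{[k]})\bigr)=\mu\bigl(\lambda^{(i)}(E(B_1\cdots B_k,<^{[k]}))\bigr)=\mu\bigl(E(B_1\cdots B_{i+1}B_i\cdots B_k,\lambda^{(i)}[<^{[k]}])\bigr)$ by the second case. For the converse, recall that $\mu\circ\lambda^{(i)}$ is again a causet measure, so it is enough to show $\mu$ and $\mu\circ\lambda^{(i)}$ agree on every basic event $E(B_1\cdots B_k,<^{[k]})$. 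If $k\le i$, I would first write this basic event as the finite disjoint union, over all suborders $<^{[i+1]}$ of $[i+1]$ whose restriction to $[k]$ is $<^{[k]}$, of the basic events $E(B_1\cdots B_k[0,1]\cdots[0,1],<^{[i+1]})$, reducing to the case $k\ge i+1$. When $k\ge i+1$: if $i<^{[k]}i+1$, the two measures agree on the basic event by the first case; if $i$ and $i+1$ are incomparable in $<^{[k]}$, they agree by the hypothesised identity together with the second case (applied with the given entries, one of which may be $[0,1]$). Since the basic events form a separating class, $\mu=\mu\circ\lambda^{(i)}$.

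The only real obstacle is the bookkeeping in the image computation: one must keep track of the fact that $\lambda^{(i)}$ acts as the identity --- not as a genuine coordinate swap --- on basic events in which $i$ precedes $i+1$, which is precisely why the hypothesis of the lemma is restricted to incomparable pairs, and one must remember to pad a basic event supported on fewer than $i+1$ coordinates up to $[i+1]$ before the hypothesis can be applied. Neither point is hard, but both are needed for the equivalence to hold exactly as stated.
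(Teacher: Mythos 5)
Your proposal is correct and follows essentially the same route as the paper's (much terser) proof: identify $\mu=\mu\circ\lambda^{(i)}$ with agreement of $\mu$ and $\mu\circ\lambda^{(i)}$ on basic events, note that events with $i<^{[k]}i+1$ are fixed by $\lambda^{(i)}$ so agreement there is trivial, compute the image of a basic event with $i,i+1$ incomparable, and conclude via the separating class of basic events with $k>i$. Your explicit image computation and the padding of events with $k\le i$ simply spell out details the paper leaves implicit.
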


\begin{pf}
The given condition amounts to saying that the two measures~$\mu$ and
$\mu\circ\lambda^{(i)}$ agree on all the basic events
$E = E(B_1 \cdots B_k, <^{[k]})$ with $k>i$\break and $i$ and $i + 1$
incomparable in $<^{[k]}$. This also holds trivially for those $<^{[k]}$
with $i <^{[k]} i+1$, so the condition is equivalent to the statement
that the two measures agree on the separating class of all basic events
$E = E(B_1 \cdots B_k, <^{[k]})$ with $k>i$.
\end{pf}

For $k \in\N$, let
$\Lambda^k = \{ \lambda^{(1)}, \ldots, \lambda^{(k-1)} \}$.
Also, set $\Lambda= \bigcup_k \Lambda^k = \{ \lambda^{(i)} \dvtx i\in
\N\}$.

We say that a measure $\mu$ is {\em$\Lambda^k$-invariant} if
$\mu= \mu\circ\lambda^{(i)}$ for each $\lambda^{(i)} \in\Lambda
^k$; we
say that $\mu$ is {\em$\Lambda$-invariant} if
$\mu\circ\lambda^{(i)} = \mu$ for every $i$.

The following result is now immediate from Lemma~\ref{lem:invt}.

\begin{theorem}
$\!\!\!\!\!$A measure is $\Lambda$-invariant if and only if it is order-invariant.
\end{theorem}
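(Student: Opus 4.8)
The plan is to prove both implications by relating $\Lambda$-invariance to order-invariance through the intermediate notion of $\Lambda^k$-invariance and the characterisation in Lemma~\ref{lem:invt}. First I would observe that order-invariance is, by definition, equation~(\ref{order-invariance}) for \emph{all} linear extensions $\lambda$ of \emph{all} suborders $<^{[k]}$, whereas $\Lambda$-invariance is, via Lemma~\ref{lem:invt}, exactly equation~(\ref{order-invariance}) restricted to the adjacent transpositions $\lambda^{(i)}$ (and only when $i,i+1$ are incomparable). So the two directions are: order-invariance $\Rightarrow$ $\Lambda$-invariance is essentially immediate (specialise to $\lambda=\lambda^{(i)}$, and Lemma~\ref{lem:invt} repackages this as $\mu=\mu\circ\lambda^{(i)}$); the substantive direction is $\Lambda$-invariance $\Rightarrow$ order-invariance, which needs the fact, already recalled twice in the excerpt (first sketched in Example~1), that any two linear extensions of a finite poset are connected by a sequence of transpositions of adjacent incomparable elements.

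For the forward direction I would argue: fix $i$; by the definition of order-invariance applied with $k$ arbitrary, the sets $B_1,\dots,B_k$ arbitrary, $<^{[k]}$ any suborder with $i$ and $i+1$ incomparable, and $\lambda=\lambda^{(i)}$ (which is a linear extension of such $<^{[k]}$), we get precisely the displayed identity of Lemma~\ref{lem:invt}; hence $\mu=\mu\circ\lambda^{(i)}$ for every $i$, so $\mu$ is $\Lambda$-invariant.

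For the reverse direction, suppose $\mu$ is $\Lambda$-invariant, so by Lemma~\ref{lem:invt} equation~(\ref{order-invariance}) holds for each $\lambda^{(i)}$ (trivially also when $i<^{[k]}i+1$, both sides being the same basic event). Now fix $k$, Borel sets $B_1,\dots,B_k$, a suborder $<^{[k]}$ of $[k]$, and an arbitrary linear extension $\mu$ of $<^{[k]}$ --- call it $\sigma$ to avoid clashing with the measure. The identity permutation and $\sigma$ are both linear extensions of $<^{[k]}$, so there is a chain $\mathrm{id}=\sigma_0,\sigma_1,\dots,\sigma_m=\sigma$ of linear extensions of $<^{[k]}$ in which consecutive ones differ by a transposition of two adjacent elements that are incomparable in the intermediate poset; applying the $\lambda^{(i)}$-instance of~(\ref{order-invariance}) at each step and composing the resulting equalities of basic-event probabilities telescopes to give $\mu(E(B_1\cdots B_k,<^{[k]}))=\mu(E(B_{\sigma(1)}\cdots B_{\sigma(k)},\sigma[<^{[k]}]))$, which is~(\ref{order-invariance}) for the arbitrary $\sigma$. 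The one bookkeeping point to get right --- and the only place anything could go wrong --- is checking that a single step, say swapping positions $j$ and $j+1$ where the elements currently there are incomparable, really is an instance of the $\lambda^{(j)}$-identity after relabelling the Borel sets according to $\sigma_{t}$: one writes the current basic event in the form $E(C_1\cdots C_k, \prec)$ with $C_\ell = B_{\sigma_t(\ell)}$, notes that $j$ and $j+1$ are incomparable in $\prec$, applies Lemma~\ref{lem:invt}'s identity for $i=j$, and identifies the result with the basic event for $\sigma_{t+1}$. This is routine but needs a sentence. I do not expect a genuine obstacle here; the proof is short and the only care needed is the indexing in the telescoping step, exactly mirroring the argument already given in the excerpt right after equation~(\ref{order-invariance}).
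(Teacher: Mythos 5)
Your proof is correct and follows essentially the same route as the paper: the forward direction specialises order-invariance to the transpositions $\lambda^{(i)}$ via Lemma~\ref{lem:invt}, and the reverse direction uses the standard fact (sketched in Example~1 and invoked right after the definition of order-invariance) that any linear extension of a finite poset is reachable from the identity by transpositions of adjacent incomparable elements, telescoping the $\lambda^{(i)}$-instances of~(\ref{order-invariance}). The paper's proof is just a terser version of yours, citing that remark rather than spelling out the bookkeeping in the telescoping step.
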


\begin{pf}
From Lemma~\ref{lem:invt}, we see that a measure is $\Lambda$-invariant
if and only if $\mu$ satisfies (\ref{order-invariance}) whenever
$\lambda$
is one of the $\lambda^{(i)}$, that is, whenever $\lambda$ exchanges two
adjacent incomparable elements. As we remarked immediately after the
definition of order-invariance, this special case implies that~(\ref{order-invariance})
holds for all $\lambda$, that is, that $\mu$ is order-invariant.
\end{pf}

For a fixed $\omega= (x_1x_2\cdots, <^\N) \in\Omega$, $k \in\N$, and
$E \in\cF$, recall that $\nu^k(E)(\omega)$ is the proportion of linear
extensions $\lambda$ of $<^\N_{[k]}$ such that $\lambda^+[\omega]$ is
in~$E$.\vadjust{\eject}

We will now show that the measures $\nu^k(\cdot)(\omega)$ are
$\Lambda^k$-invariant, a result closely related to
Lemma~\ref{lem:lambda-k-invariant}. (To be precise, the special case of
that lemma with $\lambda=\lambda^{(i)}$ and $i<k$ is also a special case
of the following result, and from that special case it is easy to deduce
Lemma~\ref{lem:lambda-k-invariant}.)

\begin{theorem} \label{thm:lambda-k-invariant}
For each $k \in\N$ and $\omega\in\Omega$, the measure
$\nu^k(\cdot)(\omega)$ is $\Lambda^k$-inva\-riant.
\end{theorem}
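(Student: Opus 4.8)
The plan is to verify the defining condition of $\Lambda^k$-invariance directly: for each $i<k$, I want to show $\nu^k(\cdot)(\omega) = \nu^k(\cdot)(\omega)\circ\lambda^{(i)}$, and by Lemma~\ref{lem:invt} it suffices to check that these two measures agree on basic events $E(B_1\cdots B_n,<^{[n]})$ for $n>i$ with $i,i+1$ incomparable in $<^{[n]}$. Recall $\nu^k(E)(\omega)$ counts the proportion of linear extensions $\rho$ of $<^\N_{[k]}$ with $\rho^+[\omega]\in E$, where $\omega=(x_1x_2\cdots,<^\N)$. So I need to compare this proportion for $E$ against the same proportion for $(\lambda^{(i)})^{-1}(E) = \lambda^{(i)}(E)$ (using that $\lambda^{(i)}$ is an involution on $\Omega$).

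First I would set up the bijection on the index set. Since $i<k$, the transposition $\lambda^{(i)}$ restricts to a permutation of $[k]$, and the key observation is that $\rho\mapsto \rho\circ\lambda^{(i)}$ is a bijection from the set of linear extensions of $<^\N_{[k]}$ in which $i,i+1$ appear in one relative order to those in which they appear in the other — but more relevantly, precomposition by $\lambda^{(i)}$ is a bijection on the full set of linear extensions of $<^\N_{[k]}$ \emph{provided} $i$ and $i+1$ are incomparable in $<^\N_{[k]}$, and if they are comparable then $\lambda^{(i)}$ acts trivially on $\omega$ and there is nothing to prove. The heart of the matter is the identity at the level of $\Omega$: for a linear extension $\rho$ of $<^\N_{[k]}$, one has $(\rho\circ\lambda^{(i)})^+[\omega] = \rho^+[\lambda^{(i)}[\omega]]$ — that is, the action of the permutation group on the first $k$ coordinates composes correctly with the extension-by-identity operation $(\cdot)^+$ and the ambient action $[\cdot]$ on $\Omega$. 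This is exactly the kind of compatibility used in the proof of Lemma~\ref{lem:lambda-k-invariant}, and I would isolate it as the main computational step.

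Granting that identity, the rest is bookkeeping: the number of linear extensions $\rho$ of $<^\N_{[k]}$ with $\rho^+[\omega]\in E$ equals, after substituting $\rho = \sigma\circ\lambda^{(i)}$, the number of linear extensions $\sigma$ with $\sigma^+[\lambda^{(i)}[\omega]]\in E$, which is the number with $\sigma^+[\omega]\in \lambda^{(i)}(E)$ since $\lambda^{(i)}$ is an involution on $\Omega$. Dividing by the common total count $\ell$ of linear extensions of $<^\N_{[k]}$ gives $\nu^k(E)(\omega) = \nu^k(\lambda^{(i)}(E))(\omega) = (\nu^k(\cdot)(\omega)\circ\lambda^{(i)})(E)$, as wanted. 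Alternatively — and perhaps more cleanly — I would phrase it via equation~(\ref{eq:sum}): $\nu^k(E(B_1\cdots B_n,<^{[n]}))(\omega)$ is an average of indicators $\bone(E(B_{\lambda'_j(1)}\cdots B_{\lambda'_j(n)},\lambda'_j[<^{[n]}]))(\omega)$ over the linear extensions $\lambda'_j$ of $<^{[n]}_{[k]}$ lifted to fix $k+1,\dots,n$; applying $\lambda^{(i)}$ to the event permutes the roles of the $B$'s and the order, and since composition by $\lambda^{(i)}$ merely permutes the index set $\{1,\dots,\ell\}$ of linear extensions, the average is unchanged.

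The main obstacle I anticipate is not conceptual but notational: getting the composition identity $(\rho\circ\lambda^{(i)})^+[\omega] = \rho^+[\lambda^{(i)}[\omega]]$ exactly right, keeping straight which maps act on $\N$ (the index set), which act on $\Omega$, and in what order — the definitions of $\lambda[<^\N]$, $\lambda[\omega]$, and $\lambda^+$ all involve inverses or conditions, and one must be careful that $\lambda^{(i)}$ being an involution makes the bookkeeping symmetric. I would therefore spend most of the write-up pinning down this one identity, after which $\Lambda^k$-invariance follows by the averaging argument above together with Lemma~\ref{lem:invt}. A secondary point to address explicitly is the degenerate case where $i$ and $i+1$ are comparable in $<^\N_{[k]}$: there $\lambda^{(i)}[\omega]=\omega$ by definition, so $\nu^k(\cdot)(\omega)\circ\lambda^{(i)}$ and $\nu^k(\cdot)(\omega)$ agree trivially.
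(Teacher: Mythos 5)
Your overall strategy is the paper's: realise $\nu^k(\cdot)(\omega)$ as the uniform average over linear extensions $\rho$ of $<^\N_{[k]}$ and transfer the action of $\lambda^{(i)}$ on $\Omega$ to a re-indexing of that average by composing with the transposition. But the two steps you identify as the heart of the argument are wrong as stated. The compatibility identity has the action on the wrong side: with the paper's conventions one checks that $\lambda[\mu[\omega]]=(\mu\circ\lambda)[\omega]$, i.e.\ the permutations act on $\Omega$ on the right, so the correct identity (and the one the paper proves) is $(\rho\circ\lambda^{(i)})^+[\omega]=\lambda^{(i)}[\rho^+[\omega]]$, not $(\rho\circ\lambda^{(i)})^+[\omega]=\rho^+[\lambda^{(i)}[\omega]]$. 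The latter equals $(\lambda^{(i)}\circ\rho)^+[\omega]$ and differs in general: take $<^\N_{[3]}$ an antichain, $i=1$, and $\rho$ the $3$-cycle $1\mapsto2\mapsto3\mapsto1$; then the two sides begin $x_3x_2x_1\cdots$ and $x_1x_3x_2\cdots$ respectively. (It is also not well-defined in the paper's notation, since $\rho^+$ need not be a natural extension of $\lambda^{(i)}[<^\N]$.) Your bookkeeping step, ``the number with $\sigma^+[\lambda^{(i)}[\omega]]\in E$ is the number with $\sigma^+[\omega]\in\lambda^{(i)}(E)$, since $\lambda^{(i)}$ is an involution,'' silently uses the false commutation $\sigma^+[\lambda^{(i)}[\omega]]=\lambda^{(i)}[\sigma^+[\omega]]$, so the chain of equalities does not go through as written; with the correct identity the count does work, but then the argument is exactly the paper's.

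The second problem is that your case analysis sits at the wrong level. Whether $\rho\mapsto\rho\circ\lambda^{(i)}$ preserves the set of linear extensions of $<^\N_{[k]}$ depends on whether the elements $\rho(i)$ and $\rho(i+1)$ occupying positions $i$ and $i+1$ are incomparable, and this varies with $\rho$; it is not governed by the comparability of $i$ and $i+1$ in $<^\N$. So precomposition by $\lambda^{(i)}$ is in general \emph{not} a bijection of the full set of linear extensions even when $i$ and $i+1$ are incomparable in $<^\N_{[k]}$ (e.g.\ $1<^\N 3$, $2$ incomparable to both, $i=1$, $\rho=(1,3,2)$). Conversely, when $i<^\N i+1$ there is \emph{not} ``nothing to prove'': $\lambda^{(i)}[\omega]=\omega$ does not make $\nu^k(\cdot)(\omega)$ trivially invariant, because that measure is supported on the points $\rho^+[\omega]$, which $\lambda^{(i)}$ can move even though it fixes $\omega$. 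The paper repairs both issues simultaneously by defining, for each linear extension $\rho$ separately, an involution which is composition with the transposition when $\rho(i)$ and $\rho(i+1)$ are incomparable and the identity otherwise (in which case $\lambda^{(i)}$ also fixes $\rho^+[\omega]$); this involution satisfies $(\lambda^{(i)}[\rho])^+[\omega]=\lambda^{(i)}[\rho^+[\omega]]$ in all cases, and re-indexing the average by it yields the theorem. Your alternative via equation~(\ref{eq:sum}) has the same flaw: composing with $\lambda^{(i)}$ does not merely permute the linear extensions of $<^{[n]}_{[k]}$.
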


\begin{pf}
Fix $k \in\N$ and $\omega= (x_1x_2\cdots, \prec^\N)\in\Omega$.
We have to show that $\nu^k(E)(\omega) = \nu^k(\lambda
^{(i)}(E))(\omega)$
for every $E\in\cF$ and every $i < k$.

For $i<k$, we can consider $\lambda^{(i)}$ as acting on the set of linear\vspace*{-1pt}
extensions $\rho$ of $\prec^\N_{[k]}$ as follows. If $i$ and $i+1$ are
incomparable in $\rho[\prec^\N_{[k]}]$---that is, if $\rho(i)$ and\vspace*{-1pt}
$\rho(i+1)$ are incomparable in $\prec^\N$---then
$\lambda^{(i)}[\rho] = \lambda^{(i)}\circ\rho$; if $i$ and $i +
1$ are
comparable in $\rho[\prec^\N_{[k]}]$, then $\lambda^{(i)}[\rho] =
\rho$.
Thus, $\lambda^{(i)}$ acts as an involution\vspace*{-1pt} on the set of linear extensions
of $\prec^\N_{[k]}$.

We claim that
\[
\bigl(\lambda^{(i)}[\rho]\bigr)^+[\omega] = \lambda^{(i)}[\rho^+[\omega]].
\]
If $i$ and $i+1$ are comparable in $\rho[\prec^\N_{[k]}]$---that
is, if
$\rho(i) \prec^\N_{[k]} \rho(i+1)$---then both are equal to
$\rho^+[\omega]$. If $i$ and $i+1$ are incomparable in
$\rho[\prec^\N_{[k]}]$,\vspace*{-1pt} then both are obtained from
$\rho^+[\omega] =(x_{\rho^+(1)}x_{\rho^+(2)}\cdots, \rho[\prec
^\N_{[k]}])$
by exchanging the terms $x_{\rho^+(i)}$ and $x_{\rho^+(i+1)}$ and
changing the order to
$\lambda^{(i)}[\rho[\prec^\N_{[k]}]] =
(\lambda^{(i)}\circ\rho)[\prec^\N_{[k]}]$: to see that these
orders are
equal, note that $j < \ell$ in each order if and only if
$\lambda^{(i)}\rho(j) \prec^\N_{[k]} \lambda^{(i)}\rho(\ell)$.

For a linear extension $\rho$ of $\prec^\N_{[k]}$, we see that
$\rho^+[\omega] = (x_{\rho(1)}x_{\rho(2)} \cdots\break
x_{\rho(k)}x_{k+1}\times
\cdots, \rho^+[\prec^\N])$ is in $E$ if and only if
$(\lambda^{(i)}[\rho])^+[\omega] = \lambda^{(i)}[\rho^+[\omega]]$
is in
$\lambda^{(i)}(E)$.

Therefore, the proportion of linear extensions $\rho$ of $\prec^\N_{[k]}$\vspace*{-1pt}
such that $\rho^+[\omega] \in E$ is the same as the proportion of linear
extensions $\lambda^{(i)}[\rho]$ such that\break
$(\lambda^{(i)}[\rho])^+[\omega] \!\!\in\lambda^{(i)}(E)$, and this is the
desired result.
\end{pf}

For $k \in\N$, define $\cG_k$ to be the family of sets $H$ in $\cF$ such
that $\omega\in H$ implies $\lambda^{(i)}[\omega] \in H$ for each $i<k$.
We can alternatively write
\begin{eqnarray*}
\cG_k &=& \{ H \in\cF\dvtx \lambda(H)= H \mbox{ for all } \lambda\in
\Lambda^k\}\\
& =& \{ H \in\cF\dvtx \lambda^{-1}(H)= H \mbox{ for all }
\lambda
\in\Lambda^k\};
\end{eqnarray*}
that is, $\cG_k$ is the family of {\em$\Lambda^k$-invariant} sets.

It is easy to check that each $\cG_k$ is a $\sigma$-field, and also that
$\cG_{k+1} \subseteq\cG_k$ for all $k$. These $\sigma$-fields $\cG_k$
can be seen to correspond to the \textit{external $\sigma$-fields} in
Section~1.2 of~\cite{Georgii}.

Let
\[
\cG= \bigcap_{k=1}^\infty\cG_k = \bigl\{ H \in\cF\dvtx \lambda^{(i)}(H) = H
\mbox{ for all } i\bigr\}
\]
be the \textit{tail $\sigma$-field}, and call a set in $\cG$ a
\textit{tail event}.

An equivalent definition of $\cG_k$ is that it is the collection of sets
$H \in\cF$ such that $\omega= (x_1x_2\cdots, <^\N) \in H$ implies
$\lambda^+[\omega] \in H$ for every linear extension $\lambda$ of
$<^\N_{[k]}$. To see this, note again that, if $\lambda$ is a linear
extension of $<^\N_{[k]}$, then $\lambda$ can be generated from a sequence
of transpositions of adjacent incomparable elements, so there is a
sequence $\omega= \omega_0,\omega_1,\ldots,\omega_m = \lambda
^+[\omega]$
of elements of $\Omega$ such that, for each $j$,
$\omega_j = \lambda^{(i)}[\omega_{j-1}]$ for some $i < k$. If
$H \in\cG_k$, and $\omega\in H$, then each element of the sequence is
also in $H$.

Let us now consider the effect of conditioning on the $\sigma$-field
$\cG_k$.

As usual, for $E \in\cF$ and $\cH$ a sub-$\sigma$-field of $\cF$, we
shall write $\mu(E \mid\cH)$ for the conditional expectation
$\E_\mu(\bone_E \mid\cH)$.

\begin{theorem} \label{thm:nu-k}
$\!\!$Let $\mu$ be an order-invariant measure. For any event \mbox{$E \in\cF$}, and
any $k \in\N$, we have
\[
\mu(E \mid\cG_k) = \nu^k(E)
\]
almost surely.
\end{theorem}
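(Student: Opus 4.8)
The statement is that $\nu^k(E)$ is a version of the conditional expectation $\mu(E\mid\cG_k)=\E_\mu(\bone_E\mid\cG_k)$. By the defining property of conditional expectation, it suffices to check two things: that $\nu^k(E)$ is $\cG_k$-measurable, and that $\E_\mu(\bone_G\,\nu^k(E)) = \mu(E\cap G)$ for every $G\in\cG_k$. So the plan has three parts: establish $\cG_k$-measurability of $\nu^k(E)$; reduce the integral identity to a suitable generating class of sets $G$ and of events $E$; and carry out the computation on that generating class using order-invariance (Theorem~\ref{thm:DLR}) and the $\Lambda^k$-invariance of $\nu^k(\cdot)(\omega)$ (Theorem~\ref{thm:lambda-k-invariant}).

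For the measurability step, I would argue that $\nu^k(E)(\omega)$ depends on $\omega$ only through data that is constant on $\Lambda^k$-orbits. Concretely, $\nu^k(E)(\omega)$ is an average over the linear extensions $\lambda$ of $<^\N_{[k]}$ of $\bone_E(\lambda^+[\omega])$; since $\{\lambda^+[\omega]:\lambda\text{ a linear extension of }<^\N_{[k]}\}$ is exactly the $\Lambda^k$-orbit of $\omega$ (using the fact, recalled just before the theorem, that every such $\lambda$ is a product of adjacent transpositions of incomparable elements, so $\lambda^+[\omega]$ is reached from $\omega$ by applying maps in $\Lambda^k$), the value $\nu^k(E)(\omega)$ is invariant under each $\lambda^{(i)}$, $i<k$. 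A short check that $\omega\mapsto\nu^k(E)(\omega)$ is actually measurable — e.g.\ by first verifying it for basic events $E$ using the explicit formula~(\ref{eq:sum}) and then extending by a monotone-class/Dynkin argument over $E$ — then gives $\cG_k$-measurability, since a measurable function invariant under all $\lambda^{(i)}$ ($i<k$) is measurable with respect to $\cG_k$ (one can invoke here that $\cG_k$ consists precisely of the $\Lambda^k$-invariant sets).

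For the integral identity, I would first fix $E$ to be a basic event $E(B_1\cdots B_n,<^{[n]})$ with $n\ge k$ (basic events with $n<k$ can be padded out by appending $B_{n+1}=\cdots=B_k=[0,1]$, or one works directly with $n\ge k$), and take $G$ to range over basic events in $\cG_k$; since these generate $\cG_k$ and form a separating class, and both sides are finite measures in $E$, it is enough to verify $\E_\mu(\bone_G\nu^k(E))=\mu(E\cap G)$ on such pairs and then extend by standard measure-theoretic arguments (monotone class in $E$, then again in $G$). The key computation: because $G\in\cG_k$ we have $\bone_G = \nu^k(\bone_G)$ pointwise (as $G$ is a union of $\Lambda^k$-orbits, the average of $\bone_G$ over an orbit is $\bone_G$), so $\bone_G\,\nu^k(E) = \nu^k(\bone_G\,\nu^k(E))$, and in fact $\nu^k(\cdot)(\omega)$ is the uniform measure on the orbit of $\omega$, hence $\nu^k(\bone_G\bone_E)(\omega) = \bone_G(\omega)\,\nu^k(\bone_E)(\omega)$ — i.e.\ $\nu^k$ "pulls out" the $\cG_k$-measurable factor. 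Therefore $\bone_G\,\nu^k(E) = \nu^k(\bone_{E\cap G})$. Taking $\E_\mu$ of both sides and applying Theorem~\ref{thm:DLR} (with the event $E\cap G\in\cF$) gives $\E_\mu\big(\bone_G\,\nu^k(E)\big) = \E_\mu\,\nu^k(E\cap G) = \mu(E\cap G)$, as required.

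The main obstacle I anticipate is not any single deep step but the bookkeeping needed to make the "pull-out" identity $\bone_G\,\nu^k(E)=\nu^k(\bone_{E\cap G})$ fully rigorous: one must be careful that $\nu^k(\cdot)(\omega)$ really is uniform on the full $\Lambda^k$-orbit of $\omega$ and that this orbit has a well-defined finite size equal to the number of linear extensions of $\Pi_k(\omega)$, including the degenerate possibility that some $\lambda^{(i)}$ acts trivially (because $i$ and $i+1$ are comparable), which is exactly the situation handled by extending $\lambda^{(i)}$ to all of $\Omega$ in Section~\ref{sec:invariant}. Once that structural fact is pinned down, the rest is a routine combination of Theorem~\ref{thm:DLR}, Theorem~\ref{thm:lambda-k-invariant}, and the separating-class property of basic events.
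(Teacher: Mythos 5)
Your proposal is correct and follows essentially the same route as the paper's proof: $\cG_k$-measurability of $\nu^k(E)$ via its invariance under the $\lambda^{(i)}$ with $i<k$, then the pull-out identity $\bone_G\,\nu^k(E)=\nu^k(E\cap G)$ for $G\in\cG_k$, followed by an application of Theorem~\ref{thm:DLR}. The detour through basic events $E$ and basic $G\in\cG_k$ is unnecessary (and the claim that basic events lying in $\cG_k$ generate $\cG_k$ would itself need care): your key computation already works verbatim for arbitrary $E\in\cF$ and arbitrary $G\in\cG_k$, which is exactly how the paper argues.
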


\begin{pf}
Fix $E \in\cF$ and $k\in\N$.

We start by showing that $\nu^k(E)(\cdot)$ is $\cG_k$-measurable. Fix
now $\omega= (x_1x_2\cdots,\break <^\N) \in\Omega$ and $i<k$. We claim that
$\nu^k(E)(\omega) = \nu^k(E)(\lambda^{(i)}[\omega])$: this will
imply that
$\{ \omega\dvtx \nu^k(E)(\omega) \le x \}$ is in $\cG_k$, for all $x$, as
required.

The result is immediate unless $i$ and $i + 1$ are incomparable in
$<^\N$, so we suppose that they are incomparable. Then, for each linear
extension $\rho$ of $<^\N_{[k]}$, $\rho^+[\omega]$ is in $E$ if and only
if the corresponding permutation $\rho\lambda^{(i)}$ of $<^\N_{[k]}$ is
such that $(\rho\lambda^{(i)})[\lambda^{(i)}[\omega]] \in E$. Therefore,
we indeed have $\nu^k (E)(\omega) = \nu^k (E)(\lambda^{(i)} [\omega])$.

To show that the conditional expectation $\E_\mu(\bone_E \mid\cG
_k)$ is
equal to $\nu^k(E)$, we need to show
that
%
\begin{equation} \label{eq:nu-k}
\int_H \bone_E \, d\mu= \int_H \nu^k(E) \, d\mu
\end{equation}
for every $H \in\cG_k$.
The left-hand side in (\ref{eq:nu-k}) is just $\mu(H \cap E)$.

The right-hand side is $\E_\mu(\bone_H \nu^k(E))$. We claim that
$(\bone_H \nu^k(E))(\omega) = \nu^k(H\cap E)(\omega)$ for all
$\omega\in\Omega$. Indeed, both sides are equal to $\nu^k(E)$ if
$\omega\in H$, and equal to zero if not, as $H$ is an invariant set for
$\Lambda_k$. Hence,  Theorem~\ref{thm:DLR} yields
\[
\E_\mu(\bone_H \nu^k(E)) = \E_\mu\bigl(\nu^k(H\cap E)\bigr) = \mu(H\cap E)
\]
as required.\vadjust{\eject}
\end{pf}

In the terminology of~\cite{Georgii}, Theorem~\ref{thm:nu-k} establishes
that the functions $\nu^k (\cdot) (\cdot)$ are a family of measure
kernels, analogous to Gibbsian specifications in statistical mechanics.

Our next aim is to show that the family of order-invariant measures, and
the family of $\Lambda^k$-invariant measures for each fixed $k$, are
closed subsets of the set of all causet measures, in the topology of weak
convergence.

Recall that a sequence $(\mu_n)$ of probability measures on a space
$(\Omega,\cF)$, where $\cF$ is the Borel $\sigma$-field for some topology
on $\Omega$, is said to \textit{converge weakly} to a probability measure
$\mu$ if $\E_{\mu_n} f \to\E_\mu f$, as $n \to\infty$, for every bounded
continuous real-valued function $f$ on $\Omega$: we write
$\mu_n \Rightarrow\mu$. There are a number of equivalent conditions---see Theorem~2.1 of Billingsley~\cite{billingsley}, for example. The one
that we shall make use of shortly is that $\mu_n \Rightarrow\mu$ if and
only if $\limsup\mu_n(F) \le\mu(F)$ for all closed sets $F$.
Another fact that we shall use later is that $\mu_n \Rightarrow\mu$ if
and only if $\mu_n(E) \to\mu(E)$ for all sets $E \in\cF$ such that
$\mu(\partial E) = 0$, where $\partial E$ denotes the boundary of~$E$.

We have already seen that our $\sigma$-field $\cF$ is the Borel
$\sigma$-field for the product topology on~$\Omega$.

Let $\cM$ be the set of probability measures on $(\Omega, \cF)$, let
$\cP$ be the set of those measures in $\cM$ that are order-invariant, and,
for $k \in\N$, let $\cP_k$ be the set of measures in $\cM$ that are
$\Lambda^k$-invariant.

\begin{theorem} \label{thm:closed}
For each $i$, $\{ \mu\dvtx \mu= \mu\circ\lambda^{(i)} \}$ is closed in the
topology of weak convergence. As a consequence, each of the $\cP_k$, and
$\cP$, are closed in the topology of weak convergence.
\end{theorem}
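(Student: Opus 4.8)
The plan is to deduce everything from the fact, already recorded, that each $\lambda^{(i)}\colon\Omega\to\Omega$ is continuous and is an involution. The crucial step is to show that the map $\mu\mapsto\mu\circ\lambda^{(i)}$ is continuous on the set $\cM$ of causet measures with respect to weak convergence. Since $\lambda^{(i)}$ is a continuous involution, $\lambda^{(i)}(F)=(\lambda^{(i)})^{-1}(F)$ is closed in $\Omega$ whenever $F$ is closed, so $\mu\circ\lambda^{(i)}$ is genuinely the push-forward of $\mu$ under $\lambda^{(i)}$. I would then use the closed-set criterion for weak convergence recalled above: if $\mu_n\Rightarrow\mu$, then for every closed $F\subseteq\Omega$,
\[
\limsup_n(\mu_n\circ\lambda^{(i)})(F)=\limsup_n\mu_n(\lambda^{(i)}(F))\le\mu(\lambda^{(i)}(F))=(\mu\circ\lambda^{(i)})(F),
\]
and since this holds for all closed $F$ it follows that $\mu_n\circ\lambda^{(i)}\Rightarrow\mu\circ\lambda^{(i)}$.

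With this continuity in hand, closedness is immediate. The space $\Omega$ was exhibited as a closed (hence compact) subspace of the compact metrisable space $[0,1]^\N\times2^\N$, so $\cM$ with the weak topology is itself compact and metrisable, and in particular Hausdorff, so weak limits are unique. Now take a sequence $(\mu_n)$ in $\{\mu:\mu=\mu\circ\lambda^{(i)}\}$ with $\mu_n\Rightarrow\mu$. On one hand $\mu_n\circ\lambda^{(i)}=\mu_n\Rightarrow\mu$; on the other hand $\mu_n\circ\lambda^{(i)}\Rightarrow\mu\circ\lambda^{(i)}$ by the continuity just established. Uniqueness of weak limits forces $\mu=\mu\circ\lambda^{(i)}$, so $\{\mu:\mu=\mu\circ\lambda^{(i)}\}$ is (sequentially, hence genuinely) closed in $\cM$.

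The stated consequences then follow formally. The set $\cP_k$ of $\Lambda^k$-invariant measures is by definition $\bigcap_{i=1}^{k-1}\{\mu:\mu=\mu\circ\lambda^{(i)}\}$, a finite intersection of closed sets, hence closed. The set $\cP$ of order-invariant measures coincides, by the earlier theorem identifying order-invariance with $\Lambda$-invariance, with $\bigcap_{i\in\N}\{\mu:\mu=\mu\circ\lambda^{(i)}\}=\bigcap_k\cP_k$, an intersection of closed sets, hence also closed.

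I do not expect a genuine obstacle here: the only point that needs care is the bookkeeping that $\lambda^{(i)}$ is an involution, so that $\mu\circ\lambda^{(i)}$ really is the push-forward $\lambda^{(i)}_*\mu$ and $\lambda^{(i)}$ sends closed sets to closed sets; everything else is a routine application of the weak-convergence criteria recalled before the statement. If one wished to avoid appealing to metrisability of $\cM$, one could instead observe that $\{\mu:\mu=\mu\circ\lambda^{(i)}\}$ is the preimage of the diagonal of $\cM\times\cM$ under the continuous map $\mu\mapsto(\mu,\mu\circ\lambda^{(i)})$, but since $\cM$ is metrisable the sequential argument above is the most transparent.
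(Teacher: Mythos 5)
Your argument is correct and is essentially the paper's own proof: both use the closed-set criterion for weak convergence together with the fact that the continuous involution $\lambda^{(i)}$ maps closed sets to closed sets to get $\mu_n\circ\lambda^{(i)}\Rightarrow\mu\circ\lambda^{(i)}$, then invoke uniqueness of weak limits and write $\cP_k$ and $\cP$ as intersections of the sets $\{\mu:\mu=\mu\circ\lambda^{(i)}\}$. Your extra remarks on metrisability and on $\lambda^{(i)}(F)=(\lambda^{(i)})^{-1}(F)$ are harmless bookkeeping that the paper leaves implicit.
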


\begin{pf}
Suppose that $\mu_n \Rightarrow\mu$, and that
$\mu_n = \mu_n \circ\lambda^{(i)}$ for each $n$.
Let $F$ be any closed set in the product topology on $\Omega$. As
$\lambda^{(i)}$ is a continuous involution, $\lambda^{(i)}(F)$ is also
closed. Hence, we have
$\limsup\mu^n(\lambda^{(i)}(F)) \le\mu(\lambda^{(i)}(F))$, or in other
words
$\limsup(\mu_n \circ\lambda^{(i)})(F) \le(\mu\circ\lambda^{(i)})(F)$.
This shows that
$\mu_n = \mu_n\circ\lambda^{(i)} \Rightarrow\mu\circ\lambda^{(i)}$.
(Alternatively, we could appeal to the \textit{Continuous Mapping theorem}---see~(2.5), or Theorem~2.7, in Billingsley~\cite{billingsley}.)

As weak limits are unique when they exist, we now deduce that
$\mu= \mu\circ\lambda^{(i)}$.

Each of the $\cP_k$, and $\cP$, are intersections of sets of the form
$\{ \mu\dvtx \mu= \mu\circ\lambda^{(i)} \}$. Therefore, these too are
closed in the topology of weak convergence.
\end{pf}

\section{Extremal order-invariant measures} \label{sec:eoim}

As we mentioned in Section~\ref{sec:oip}, it is immediate that any convex
combination of order-invariant measures is again an order-invariant
measure, so the family of all order-invariant measures is a convex subset
of the set of all causet measures. As usual, an \textit{extremal}
order-invariant measure is one that cannot be written as a non-trivial
convex combination of two others.

The family of all order-invariant measures is extremely rich. In this
and the next section, our aim is to show that all the \textit{extremal}
members of this family are of a very special form.

Under some very general conditions, the extremal measures are exactly
those that have ``trivial tail;'' see Bovier~\cite{Bovier} or
Georgii~\cite{Georgii}, for instance. We will prove a similar result
in our setting, regarding the tail $\sigma$-field $\cG$ introduced in the
previous section; it is possible to deduce this from the results in
Chapter~7 of Georgii---see in particular Theorems 7.7 and 7.12 and
Remark~7.13, as well as Section 7.2 therein---our proof is
self-contained, and also brings in a third equivalent condition that is of
special interest in our setting.

To explain this condition, we shall first show that, for each fixed\vspace*{1pt}
\mbox{$E \in\cF$}, the family $\nu^k(E)(\cdot)$ of random variables converges
a.s.\ to a limit $\cG$-measurable random measure $\nu(E)$.

\begin{theorem} \label{thm:nu}
Let $\mu$ be any order-invariant measure, and let $E$ be any event in
$\cF$. Then the sequence $\nu^k(E)(\omega)$ converges to a
$\cG$-measurable limit $\nu(E)(\omega)$ $\mu$-a.s. Moreover,
$\nu(E) = \mu(E \mid\cG)$, $\mu$-a.s., and $\E_\mu\nu(E) = \mu(E)$.
\end{theorem}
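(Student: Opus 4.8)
The plan is to recognise $\bigl(\nu^k(E)\bigr)_{k\ge 1}$ as a reverse (backward) martingale. By Theorem~\ref{thm:nu-k}, order-invariance of $\mu$ gives $\nu^k(E)=\mu(E\mid\cG_k)$ $\mu$-a.s.\ for each $k$. The $\sigma$-fields $\cG_k$ decrease, $\cG_{k+1}\subseteq\cG_k$, with $\bigcap_k\cG_k=\cG$; hence by the tower property $\E_\mu\bigl(\nu^k(E)\mid\cG_{k+1}\bigr)=\mu(E\mid\cG_{k+1})=\nu^{k+1}(E)$ $\mu$-a.s., so $\bigl(\nu^k(E),\cG_k\bigr)_k$ is a reverse martingale. (Equivalently, one applies the backward martingale convergence theorem directly to the bounded random variable $\bone_E$ and the decreasing filtration $(\cG_k)$.) The backward martingale convergence theorem then yields that $\nu^k(E)=\mu(E\mid\cG_k)$ converges $\mu$-a.s.\ (and in $L^1(\mu)$) to $\mu(E\mid\cG)$.

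To obtain an honestly $\cG$-measurable limit, rather than one defined only up to a $\mu$-null set, I would \emph{define} $\nu(E):=\limsup_{k\to\infty}\nu^k(E)$ pointwise on $\Omega$. For any fixed $n$, $\limsup_{k\to\infty}\nu^k(E)=\inf_{m\ge n}\sup_{k\ge m}\nu^k(E)$, and each $\nu^k(E)$ with $k\ge n$ is $\cG_k$-measurable, hence $\cG_n$-measurable since $\cG_k\subseteq\cG_n$; thus $\nu(E)$ is $\cG_n$-measurable for every $n$, i.e.\ $\cG$-measurable. On the full-$\mu$-measure event where $\lim_k\nu^k(E)$ exists, $\nu(E)=\lim_k\nu^k(E)=\mu(E\mid\cG)$ $\mu$-a.s., which gives the remaining identification $\nu(E)=\mu(E\mid\cG)$ $\mu$-a.s.

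Finally, $\E_\mu\nu(E)=\E_\mu\mu(E\mid\cG)=\mu(E)$; alternatively, this follows directly from Theorem~\ref{thm:DLR}, which gives $\E_\mu\nu^k(E)=\mu(E)$ for all $k$, together with bounded convergence, since each $\nu^k(E)$ takes values in $[0,1]$. I do not anticipate a real obstacle here: the backward-martingale input is precisely Theorem~\ref{thm:nu-k}, already proved, and the only point needing a little care is producing a genuinely $\cG$-measurable (not merely $\mu$-completion-measurable) limit, which the $\limsup$ together with the nesting $\cG_k\subseteq\cG_n$ for $k\ge n$ handles cleanly.
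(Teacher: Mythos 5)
Your proposal is correct and follows essentially the same route as the paper: both treat $\nu^k(E)=\mu(E\mid\cG_k)$ (Theorem~\ref{thm:nu-k}) as a backward martingale for the decreasing filtration $(\cG_k)$ with $\bigcap_k\cG_k=\cG$, apply the Backward Martingale Convergence Theorem, obtain $\cG$-measurability of the limit from the $\cG_n$-measurability of $\nu^k(E)$ for $k\ge n$, identify the limit with $\mu(E\mid\cG)$, and get $\E_\mu\nu(E)=\mu(E)$ by taking expectations. Your pointwise $\limsup$ definition of $\nu(E)$ and the direct appeal to the strong form of the convergence theorem are only cosmetic refinements of the paper's argument, which instead verifies the identification $\nu(E)=\mu(E\mid\cG)$ by hand via equation~(\ref{eq:nu-k}) and bounded convergence.
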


\begin{pf}
Firstly, as $\cG_1 \supseteq\cG_2 \supseteq\cdots,$ the sequence
$\mu(E \mid\cG_k) = \E_\mu(\bone_E \mid\cG_k)$ forms a backward
martingale with respect to the sequence $(\cG_k)$, for any $E \in\cF$.
Therefore, by the Backward Martingale Convergence theorem (see, e.g., Grimmett and Stirzaker~\cite{GrSt}),
$\mu(E \mid\cG_k) = \nu^k(E)$ converges a.s.\ to some random variable
$\nu(E)$. Given $n \in\N$, $\nu^k(E)$ is $\cG_n$-measurable for
$k \ge n$, and hence so is $\nu(E)$; therefore, the limit $\nu(E)$ is
$\cG$-measurable.

To show that $\nu(E) = \mu(E \mid\cG)$ a.s., we need to verify
that, for all $H \in\cG$,
%
\begin{equation} \label{eq:three}
\E_\mu(\bone_{H \cap E}) = \E_\mu(\bone_H \nu(E)).
\end{equation}
By~(\ref{eq:nu-k}), we have that
\[
\E_\mu(\bone_{H \cap E}) = \E_\mu(\bone_H \nu^k(E))
\]
for every $H \in\cG$ and every positive integer $k$. By almost sure and
bounded convergence, the right-hand side of the last equation tends to
$\E_\mu(\bone_H \nu(E))$ as $k \to\infty$, as required.

The result that $\E_\mu\nu(E) = \mu(E)$ is the special case of
(\ref{eq:three}) with $H = \Omega$.
\end{pf}

We say that an order-invariant measure $\mu$ is \textit{essential} if, for
every $E \in\cF$, $\nu^k(E) \to\mu(E)$ a.s. In other words, $\mu$ is
essential if, for every $E$, the limit $\nu(E)$ in Theorem~\ref
{thm:nu} is
a.s.\ equal to $\mu(E)$, or equivalently $\mu(E \mid\cG) = \mu(E)$ a.s.

We say that $\mu$ has \textit{trivial tails} if $\mu(H)$ is equal to~0 or~1
for every $H \in\cG$. As usual, this is equivalent to saying that every
$\cG$-measurable random variable is constant. We now establish two
alternative characterizations of extremal order-invariant measures.

\begin{theorem} \label{thm:equivalence}
The following are equivalent for an order-invariant measure~$\mu$:
\begin{longlist}[(iii)]
\item[(i)]
$\mu$ is extremal;
\item[(ii)]
$\mu$ has trivial tails;
\item[(iii)]
$\mu$ is essential.
\end{longlist}
\end{theorem}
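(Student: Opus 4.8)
The plan is to prove the three-way equivalence by a cycle of implications: (ii)$\Rightarrow$(iii)$\Rightarrow$(i)$\Rightarrow$(ii), exploiting the machinery already built — in particular Theorem~\ref{thm:nu}, which tells us $\nu^k(E)\to\nu(E)=\mu(E\mid\cG)$ a.s.\ for every $E\in\cF$, and that $\E_\mu\nu(E)=\mu(E)$.

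\emph{(ii)$\Rightarrow$(iii).} This is the easiest direction. If $\mu$ has trivial tails, then every $\cG$-measurable random variable is a.s.\ constant; in particular $\nu(E)=\mu(E\mid\cG)$ is a.s.\ constant, and since $\E_\mu\nu(E)=\mu(E)$ the constant must be $\mu(E)$. Hence $\nu^k(E)\to\mu(E)$ a.s., which is exactly the definition of $\mu$ being essential.

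\emph{(iii)$\Rightarrow$(i).} Suppose $\mu$ is essential but not extremal, say $\mu=r\mu_1+(1-r)\mu_2$ with $r\in(0,1)$, $\mu_1\neq\mu_2$, both order-invariant. First I would note that $\mu_1$ is absolutely continuous with respect to $\mu$ (since $r>0$), so the Radon--Nikodym derivative $f=d\mu_1/d\mu$ exists and is bounded by $1/r$. Because both $\mu_1$ and $\mu$ are order-invariant, one checks that $f$ is a.s.\ $\cG$-measurable: concretely, for any basic event $E$ and any $k$, order-invariance of both measures combined with the sum formula~(\ref{eq:sum}) gives $\E_\mu(\bone_E\nu^k(f\cdot))$-type identities, or more directly, applying~(\ref{eq:nu-k}) to both $\mu_1$ and $\mu$ shows $\E_{\mu_1}(\bone_E\mid\cG_k)$ and $f$ relate so that $f=\E_\mu(f\mid\cG_k)$ for every $k$, whence $f$ is $\cG$-measurable. (The cleanest route: for $H\in\cG_k$, $\mu_1(H\cap E)=\E_{\mu_1}\nu^k(\bone_{H\cap E})$; using $\bone_H\nu^k(\bone_E)=\nu^k(\bone_{H\cap E})$ as in the proof of Theorem~\ref{thm:nu-k}, deduce $\int_H f\,d\mu=\int_H\E_\mu(f\mid\cG_k)\,d\mu$, hence $f$ is $\cG_k$-measurable for all $k$, so $\cG$-measurable.) But if $\mu$ is essential, every $\cG$-measurable random variable is a.s.\ constant (since $\mu(H\mid\cG)=\mu(H)\in\{0,1\}$ forces triviality of $\cG$ — one shows essential $\Rightarrow$ trivial tails en route, or argues directly that a $\cG$-measurable $f$ equals $\E_\mu(f\mid\cG)=\mu$-a.s.-limit of $\nu^k$ applied to a set, hence constant). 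Then $f\equiv 1$ $\mu$-a.s., so $\mu_1=\mu$, a contradiction.

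\emph{(i)$\Rightarrow$(ii).} This is the direction I expect to be the main obstacle, and it is where the convex-geometry heart of the argument lies. Suppose $\mu$ is extremal but has a tail event $H\in\cG$ with $0<\mu(H)<1$. Define $\mu_1(\cdot)=\mu(\cdot\cap H)/\mu(H)$ and $\mu_2(\cdot)=\mu(\cdot\cap H^c)/\mu(H^c)$; then $\mu=\mu(H)\mu_1+\mu(H^c)\mu_2$ is a non-trivial convex combination, and $\mu_1\neq\mu_2$ since they are mutually singular. It remains to verify that $\mu_1$ (and symmetrically $\mu_2$) is order-invariant, i.e.\ $\Lambda$-invariant by Theorem~4.5 (the numbered $\Lambda$-invariance theorem). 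For each generator $\lambda^{(i)}\in\Lambda$ and each basic event $E$, we must show $\mu_1(\lambda^{(i)}(E))=\mu_1(E)$, i.e.\ $\mu(\lambda^{(i)}(E)\cap H)=\mu(E\cap H)$. Here the key is that $H\in\cG\subseteq\cG_{i+1}$ means $\lambda^{(i)}(H)=H$, so $\lambda^{(i)}(E\cap H)=\lambda^{(i)}(E)\cap H$; applying $\Lambda$-invariance of $\mu$ itself gives $\mu(\lambda^{(i)}(E)\cap H)=\mu(\lambda^{(i)}(E\cap H))=\mu(E\cap H)$, as needed. Thus $\mu_1,\mu_2$ are order-invariant, contradicting extremality of $\mu$. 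This completes the cycle, and hence the proof.

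The only genuinely delicate point is the $\cG$-measurability of the Radon--Nikodym derivative in (iii)$\Rightarrow$(i) — making rigorous that order-invariance of both $\mu_1$ and $\mu$ forces $f=\E_\mu(f\mid\cG_k)$ for every $k$ — but this is a standard Gibbs-theory computation powered by~(\ref{eq:nu-k}) and the identity $\bone_H\nu^k(E)=\nu^k(H\cap E)$ for $H\in\cG_k$, exactly as used in the proof of Theorem~\ref{thm:nu-k}.
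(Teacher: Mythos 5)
Your cycle (ii)$\Rightarrow$(iii)$\Rightarrow$(i)$\Rightarrow$(ii) is the same as the paper's, and two of the three legs are fine. The step (ii)$\Rightarrow$(iii) is identical to the paper's. Your (i)$\Rightarrow$(ii) is correct and in fact a little cleaner than the paper's: where the paper verifies order-invariance of $\mu_1(\cdot)=\mu(\cdot\cap H)/\mu(H)$ by integrating the identities of Theorem~\ref{thm:nu-k} and Lemma~\ref{lem:lambda-k-invariant} over $H$, you use directly that each $\lambda^{(i)}$ is an involution of $\Omega$ fixing the tail event $H$, so $\mu(\lambda^{(i)}(E)\cap H)=\mu(\lambda^{(i)}(E\cap H))=\mu(E\cap H)$ by $\Lambda$-invariance of $\mu$; together with the $\Lambda$-invariance characterisation of order-invariance this is a complete argument, and mutual singularity gives $\mu_1\neq\mu_2$.

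The gap is in (iii)$\Rightarrow$(i), precisely at the point you flag. The Radon--Nikodym route (essentially Georgii's Theorem~7.7) is a legitimate alternative to the paper's argument, but your ``cleanest route'' to the $\cG$-measurability of $f=d\mu_1/d\mu$ proves nothing: the identity $\int_H f\,d\mu=\int_H \E_\mu(f\mid\cG_k)\,d\mu$ for all $H\in\cG_k$ is the \emph{defining} property of conditional expectation and holds for every integrable $f$, whether or not $f$ is $\cG_k$-measurable, so it cannot yield measurability. What you need is $f=\E_\mu(f\mid\cG_k)$ $\mu$-a.s., i.e.\ $\int_E f\,d\mu=\int_E\E_\mu(f\mid\cG_k)\,d\mu$ for \emph{every} $E\in\cF$, and this genuinely uses the DLR property of \emph{both} measures: for $E\in\cF$,
$\E_\mu(\bone_E\,\E_\mu(f\mid\cG_k))=\E_\mu(\E_\mu(\bone_E\mid\cG_k)\,f)
=\E_\mu(\nu^k(E)\,f)=\E_{\mu_1}(\nu^k(E))=\mu_1(E)=\E_\mu(\bone_E f)$,
where the second equality is Theorem~\ref{thm:nu-k} for $\mu$ and the fourth is Theorem~\ref{thm:DLR} for $\mu_1$ (order-invariant by hypothesis). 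Then $f=\lim_k\E_\mu(f\mid\cG_k)=\E_\mu(f\mid\cG)$ a.s.\ is (a.s.\ equal to) a $\cG$-measurable function; since essentiality immediately gives trivial tails (as you note), $f$ is a.s.\ constant, hence $f\equiv1$ and $\mu_1=\mu$, a contradiction. With that repair your argument closes. For comparison, the paper's own (iii)$\Rightarrow$(i) avoids densities altogether: choosing $E$ with $\mu_0(E)<\mu(E)$, it uses $\E_{\mu_0}\nu_0(E)=\mu_0(E)$ and a Markov-type bound to show that $\{\nu^k(E)\to\mu(E)\}$ has $\mu_0$-measure, hence $\mu$-measure, strictly less than~$1$, contradicting essentiality directly; this is shorter, at the price of being less transparent about the underlying convex-geometric picture that your density argument makes explicit.
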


\begin{pf}
We will show that (ii) $\Rightarrow$ (iii) $\Rightarrow$ (i)
$\Rightarrow$ (ii).

(ii) $\Rightarrow$ (iii). If $\mu$ has trivial tails, then, for any
$E\in\cF$, the $\cG$-measurable random variable $\nu(E) = \mu
(E\mid\cG)$
is a.s.\ constant. As $\nu(E)$ is bounded, it is a.s.\ equal to its
expectation $\mu(E)$. Thus, $\mu$ is essential.

(iii) $\Rightarrow$ (i). Suppose $\mu$ is essential; we aim to show
it is
extremal.

If $\mu$ is not extremal, then we can write
$\mu= \alpha\mu_0 + (1-\alpha) \mu_1$ for two distinct order-invariant
measures $\mu_0,\mu_1$ and some $\alpha\in(0,1)$.

As $\mu_0 \not= \mu$, we may choose some $E \in\cF$ such that
$\mu_0(E) < \mu(E)$.

Applying Theorem~\ref{thm:nu} to $\mu_0$, for this event $E$, we obtain
that there is a~limiting random variable $\nu_0(E)$, such that
$\E_{\mu_0}(\nu_0(E)) = \mu_0(E)$ and $\nu^k(E) \to\nu_0(E)$,
$\mu_0$-a.s.

Then
$\mu_0(E) = \E_{\mu_0}(\nu_0(E)) \ge\mu(E)
\mu_0(\{ \omega\dvtx \nu_0(E)(\omega) \ge\mu(E)\})$, so
\[
\mu_0 \bigl(\{\omega\dvtx \nu_0(E)(\omega) \ge\mu(E)\}\bigr) \le
\frac{\mu_0(E)}{\mu(E)}< 1,
\]
which implies that $\mu_0 (\{\omega\dvtx \nu^k(E)(\omega) \to\mu(E)\}
) < 1$,
and so $\mu(\{\omega\dvtx \nu^k(E)(\omega) \to\mu(E)\}) < 1$. Thus,
$\mu$ is
not essential.

(i) $\Rightarrow$ (ii).
Suppose $\mu$ does not have trivial tails, and take some tail event
$H \in\cG$ with $0 < \mu(H) < 1$.

We now consider conditioning $\mu$ on the occurrence or not of $H$. Let
$\mu_1(E) = \mu(E\cap H)/\mu(H)$, and $\mu_0(E)=\mu(E\cap H^c)/\mu(H^c)$,
where $H^c$ is the complement of $H$, for every $E \in\cF$. Then
certainly $\mu= \mu(H) \mu_1 + (1 - \mu(H)) \mu_0$, and
$\mu_1 \not= \mu_0$. It remains to verify that $\mu_1$ and $\mu_0$ are
order-invariant: this will imply that $\mu$ is a convex combination of two
distinct order-invariant measures, so is not extremal.

It will suffice to consider $\mu_1$. We have to show that
\[
\mu\bigl(E\bigl(B_1\cdots B_k,<^{[k]}\bigr) \cap H\bigr) =
\mu\bigl(E\bigl(B_{\lambda(1)}\cdots B_{\lambda(k)},\lambda\bigl[<^{[k]}\bigr]\bigr) \cap H\bigr),
\]
whenever $B_1, \ldots, B_k$ are Borel sets, and $\lambda$ is a linear
extension of $<^{[k]}$.

By Theorem~\ref{thm:nu-k}, since $H \in\cG_k$ we have
\begin{eqnarray*}
\mu\bigl(E\bigl(B_1\cdots B_k,<^{[k]}\bigr)\cap H\bigr) &=&
\int_H \bone_{E(B_1\cdots B_k,<^{[k]})} \, d\mu(\omega) \\
&=&\int_H \nu^k\bigl(E\bigl(B_1\cdots B_k,<^{[k]}\bigr)\bigr)(\omega) \, d\mu(\omega).
\end{eqnarray*}
Lemma~\ref{lem:lambda-k-invariant} tells us that
\[
\nu^k\bigl(E\bigl(B_1\cdots B_k,<^{[k]}\bigr)\bigr)(\omega) =
\nu^k\bigl(E\bigl(B_{\lambda(1)}\cdots B_{\lambda(k)},\lambda
\bigl[<^{[k]}\bigr]\bigr)\bigr)(\omega)
\]
for all $\omega$; integrating over $H$ now implies that
\begin{eqnarray*}
&&\int_H \nu^k\bigl(E\bigl(B_1\cdots B_k,<^{[k]}\bigr)\bigr)(\omega) \, d\mu(\omega)\\
&&\qquad =
\int_H \nu^k\bigl(E\bigl(B_{\lambda(1)}\cdots B_{\lambda(k)},\lambda
\bigl[<^{[k]}\bigr]\bigr)\bigr)(\omega)
\, d\mu(\omega)
\\
&&\qquad = \mu\bigl(\bigl(E\bigl(B_{\lambda(1)}\cdots B_{\lambda(k)},\lambda\bigl[<^{[k]}\bigr]\bigr)\bigr) \cap H\bigr)
\end{eqnarray*}
again by Theorem~\ref{thm:nu-k}, which completes the proof.
\end{pf}

Our next result is somewhat related to Theorems~\ref{thm:nu}
and~\ref{thm:equivalence}: we show that, for any order-invariant measure
$\mu$, for $\mu$-a.e.\ $\omega$, the sequence $\nu^k(\cdot)(\omega
)$ of
measures converges weakly to a version of $\mu(\cdot\mid\cG)(\omega)$;
in particular, if $\mu$ is extremal, then, $\mu$-a.s., the
$\nu^k (\cdot)(\cdot)$ converge weakly to $\mu(\cdot)$. Although this
is superficially similar to the result that extremal order-invariant
measures are essential, the consequences are actually rather different.
Our proof is closely related to that of Proposition 7.25 in
Georgii~\cite{Georgii}.

\begin{theorem} \label{thm:weak}
There is a family $[\hat\nu(\cdot)(\omega)]_{\omega\in\Omega}$ of
order-invariant probability measures on $(\Omega, \cF)$ such that, for
any order-invariant measure $\mu$ on~$\cF$, and $\mu$-almost every
$\omega$,
\[
\nu^k(\cdot)(\omega) \Rightarrow\hat\nu(\cdot)(\omega).
\]

Moreover, for each fixed $E \in\cF$, and each order-invariant measure
$\mu$,
\[
\hat\nu(E)(\omega) = \mu(E \mid\cG)(\omega) = \nu(E)(\omega),
\qquad \mu\mbox{-a.s.}
\]
\end{theorem}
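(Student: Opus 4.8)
The plan is to reduce the weak convergence of the random measures $\nu^k(\cdot)(\omega)$ to scalar convergence of the averages $\nu^k(f)(\omega):=\int f\,d\nu^k(\cdot)(\omega)$ for $f$ in a countable determining class, and then to exploit the backward martingale structure already visible in the proof of Theorem~\ref{thm:nu}. Since $\Omega$ is a compact metric space, $C(\Omega)$ is separable; fix a countable set $\cD\subseteq C(\Omega)$ that is dense in the supremum norm. Because each $\nu^k(\cdot)(\omega)$ is a probability measure, the functionals $f\mapsto\nu^k(f)(\omega)$ are uniformly $1$-Lipschitz in $\|\cdot\|_\infty$; hence, for a fixed $\omega$, convergence of $\nu^k(f)(\omega)$ as $k\to\infty$ for every $f\in\cD$ forces convergence for every $f\in C(\Omega)$, and, as $\Omega$ is compact, the limiting functional is positive, linear and sends $1$ to $1$, so by the Riesz representation theorem it is integration against a unique Borel probability measure on $\Omega$. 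I would first record two routine extensions, from indicators $\bone_E$ to bounded measurable $g\colon\Omega\to\R$, of facts already established: (a)\ $\omega\mapsto\nu^k(g)(\omega)$ is $\cG_k$-measurable (shown for $g=\bone_E$ inside the proof of Theorem~\ref{thm:nu}); and (b)\ for every order-invariant $\mu$, $\nu^k(g)=\mu(g\mid\cG_k)$ $\mu$-a.s.\ (this is (\ref{eq:nu-k}), i.e.\ Theorem~\ref{thm:nu-k}, for $g=\bone_E$). Both follow by linearity and bounded convergence.

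Next I would construct the universal family $\hat\nu$. Let $G_0\subseteq\Omega$ be the set of $\omega$ for which $\lim_k\nu^k(f)(\omega)$ exists for every $f\in\cD$; by (a) each $\limsup_k\nu^k(f)$ and $\liminf_k\nu^k(f)$ is $\cG$-measurable, so $G_0\in\cG$, and $G_0$ does not depend on any measure. For $\omega\in G_0$, define $\hat\nu(\cdot)(\omega)$ to be the Borel probability measure on $\Omega$ with $\int f\,d\hat\nu(\cdot)(\omega)=\lim_k\nu^k(f)(\omega)$ for all $f\in C(\Omega)$ (well-defined by the reduction above); then $\nu^k(\cdot)(\omega)\Rightarrow\hat\nu(\cdot)(\omega)$. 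For $\omega\notin G_0$, set $\hat\nu(\cdot)(\omega)$ equal to some fixed order-invariant measure, for instance the iid-uniform-label process of Example~3. Each $\hat\nu(\cdot)(\omega)$ is order-invariant: this is automatic off $G_0$, while for $\omega\in G_0$ and any $i$, Theorem~\ref{thm:lambda-k-invariant} gives $\nu^k(\cdot)(\omega)=\nu^k(\cdot)(\omega)\circ\lambda^{(i)}$ for all $k>i$, and since $\{\rho:\rho=\rho\circ\lambda^{(i)}\}$ is closed in the topology of weak convergence (Theorem~\ref{thm:closed}) we obtain $\hat\nu(\cdot)(\omega)=\hat\nu(\cdot)(\omega)\circ\lambda^{(i)}$; as this holds for every $i$, $\hat\nu(\cdot)(\omega)$ is $\Lambda$-invariant, hence order-invariant.

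Now fix an order-invariant measure $\mu$. By (b), for each $f\in\cD$ the sequence $\nu^k(f)=\mu(f\mid\cG_k)$ is a backward martingale relative to the decreasing filtration $(\cG_k)$, so by the Backward Martingale Convergence Theorem it converges $\mu$-a.s.\ (and in $L^1$) to $\mu(f\mid\cG)$. As $\cD$ is countable, $\mu(G_0)=1$, and on $G_0$ we have arranged $\nu^k(\cdot)(\omega)\Rightarrow\hat\nu(\cdot)(\omega)$; this gives the first assertion. For the identification, fix $E\in\cF$; since $\mu(E\mid\cG)=\nu(E)$ $\mu$-a.s.\ by Theorem~\ref{thm:nu}, it suffices to check that $\omega\mapsto\hat\nu(E)(\omega)$ is a version of $\mu(E\mid\cG)$. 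For $f\in C(\Omega)$, the map $\omega\mapsto\int f\,d\hat\nu(\cdot)(\omega)$ agrees on the $\mu$-conull set $G_0$ with $\lim_k\nu^k(f)(\omega)$, hence is $\cG$-measurable, and for $H\in\cG\subseteq\cG_k$ one has $\int_H\big(\int f\,d\hat\nu(\cdot)(\omega)\big)\,d\mu(\omega)=\lim_k\int_H\nu^k(f)\,d\mu=\int_H f\,d\mu$ by bounded convergence and (b). Writing an open set $O$ as an increasing limit of continuous functions $g_n\uparrow\bone_O$ and passing to the limit (monotone convergence, for both $\hat\nu(\cdot)(\omega)$ and $\mu$) yields the same two properties for $E=O$; a standard Dynkin-system argument over the $\pi$-system of open sets, using that each $\hat\nu(\cdot)(\omega)$ is a measure, then extends them to every $E\in\cF$. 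Hence $\hat\nu(E)=\mu(E\mid\cG)=\nu(E)$ $\mu$-a.s.

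The step I expect to be the main obstacle is the construction in the second paragraph: producing a \emph{single} family $\hat\nu$, defined at every $\omega$ and independent of $\mu$, whose members are genuine order-invariant probability measures. This is precisely where compactness of $\Omega$ is essential, so that the limiting positive functionals lose no mass and the Riesz theorem applies on $\Omega$ itself, and where one needs the weak-closedness of $\lambda^{(i)}$-invariance (Theorem~\ref{thm:closed}) to pass order-invariance through the limit. The remaining ingredients --- the backward martingale convergence and the monotone/Dynkin bookkeeping for the identification --- should be essentially routine given the machinery developed in Sections~\ref{sec:oip} and~\ref{sec:invariant}.
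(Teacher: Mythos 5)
Your argument is correct, but the construction of the universal family $\hat\nu$ follows a genuinely different route from the paper's. The paper works setwise: since $(\Omega,\cF)$ is standard Borel, it invokes Georgii's Theorem~(4.A11) to obtain a \emph{countable core} $\cC$, defines $\hat\nu(\cdot)(\omega)$ on the set where $\nu^k(E)(\omega)$ converges for every $E\in\cC$ by extending the limiting set function to a measure, proves the identification with $\mu(\cdot\mid\cG)$ by a Dynkin/$\pi$-system argument based on $\cC$, and only then obtains weak convergence, on a possibly smaller conull set $\tilde{\Omega}_0$, by checking convergence on the countable basis $\tilde{\cC}$ of basic events with rational endpoints and appealing to Billingsley's criterion; order-invariance of the limits is deduced, exactly as you do, from Theorem~\ref{thm:lambda-k-invariant} together with Theorem~\ref{thm:closed}. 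You instead exploit the compactness of $\Omega$ (noted in Section~\ref{sec:csp}, where $\Omega$ is identified with a closed subspace of $[0,1]^\N\times 2^\N$ and $\cF$ with its Borel $\sigma$-field): backward-martingale convergence of $\nu^k(f)$ for $f$ in a countable sup-norm-dense subset of $C(\Omega)$ --- using the routine extension of Theorem~\ref{thm:nu-k} (where, incidentally, the $\cG_k$-measurability of $\nu^k(E)$ is proved, not in Theorem~\ref{thm:nu}) to bounded measurable integrands --- produces a positive normalised limit functional, hence by Riesz a limit probability measure, and weak convergence holds at \emph{every} point of your set $G_0$ by construction, so a single exceptional-set bookkeeping step suffices and no appeal to the countable-core theorem is needed; the identification $\hat\nu(E)=\mu(E\mid\cG)=\nu(E)$ then runs through continuous functions, open sets and the $\pi$--$\lambda$ theorem rather than through $\cC$. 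What the paper's route buys is independence from compactness (only the standard Borel structure is used, so the argument transfers to non-compact settings); what yours buys is a more self-contained construction in which tightness is automatic, weak convergence is built in from the start, and order-invariance holds at every $\omega\in G_0$ rather than only on a further conull subset. Both are valid here, since $\Omega$ is indeed a compact metric space.
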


\begin{pf}
Since $(\Omega, \cF)$ is Borel, and complete and separable with
respect to
the product topology discussed in Section~\ref{sec:csp}, it is thus
\textit{standard Borel} in the terminology of Georgii~\cite{Georgii}. Then
by Theorem~(4.A11) in~\cite{Georgii}, $(\Omega, \cF)$ has a
\textit{countable core} $\cC$, that is, a countable collection of sets in
$\cF$
with the following properties:
\begin{longlist}[(ii)]
\item[(i)] $\cC$ generates $\cF$, and is a $\pi$-system,
\item[(ii)] whenever $(\nu^k)$ is a sequence of probability measures such
that $\nu^k(E)$ converges for all $E\in\cC$, then there is a (unique)
probability measure $\hat\nu$ on $(\Omega, \cF)$ such that
$\hat\nu(E) = \lim_{k\to\infty} \nu^k(E)$ for all $E \in\cC$.
\end{longlist}

Let $\cC$ be a countable core in $\cF$, and let
\[
\Omega_0 = \{ \omega\in\Omega\dvtx \nu^k(E)(\omega) \mbox{ converges }
\forall E \in\cC\}.
\]
For $\omega\in\Omega_0$ and $E\in\cC$, the limit
$\lim_{k \to\infty} \nu^k(E)(\omega)$ is equal to the $\cG$-measurable
function $\nu(E)(\omega)$, as defined in Theorem~\ref{thm:nu}: that
result tells us that $\mu(\Omega_0)=1$ for any order-invariant measure
$\mu$. We also note that $\Omega_0 \in\cF$, as the $\nu^k(E)$ are
$\cF$-measurable, and the statement that $(\nu^k(E)(\omega))$ is a
Cauchy sequence can be written in terms of these functions. Moreover,
$\Omega_0 \in\cG_k$ for all $k$, as $\nu^n(E)$ is $\cG_k$-measurable
for $n \ge k$, and so $\Omega_0 \in\cG$.

For each $\omega\in\Omega_0$, the $\nu^k(\cdot)(\omega)$ are probability
measures, and therefore, since $\cC$ is a core, the family
$\nu(E)(\omega)$ ($E \in\cC$) may be extended uniquely to a probability
measure $\hat\nu(\cdot)(\omega)$ on $(\Omega,\cF)$.

For convenience, we fix one order-invariant measure $\nu_0$, and set
$\hat\nu(\cdot)(\omega) = \nu_0(\cdot)$ for $\omega\notin\Omega_0$.

We next claim that, for each fixed $E\in\cF$, and any order-invariant
measure~$\mu$, $\hat\nu(E)$ is a version of $\mu(E\mid\cG)$.

Let $\cD= \{ E \in\cF\dvtx \hat\nu(E) \mbox{ is $\cG$-measurable}\}$.
The family $\cD$ contains the countable core $\cC$, since
$\hat\nu(E)$ coincides with $\nu(E)$ for $E\in\cC$---except on the
$\cG$-measurable set $\overline{\Omega_0}$, on which it is constant---and
we saw in Theorem~\ref{thm:nu} that $\nu(E)(\cdot)$ is $\cG$-measurable.
We also see that $\cD$ is a Dynkin-system (see, e.g., Williams~\cite{williams}), that is, it is closed under relative
complementation and increasing countable unions. By Dynkin's
$\pi$-$\lambda$ theorem (see~\cite{williams}, Theorem~A1.3), $\cD$
contains the $\sigma$-field generated by $\cC$, which is $\cF$.
Therefore, $\hat\nu(E)$ is $\cG$-measurable for all $E \in\cF$.

We now need to show that, for any order-invariant measure $\mu$,
\[
\int_H \hat\nu(E)\, d\mu= \int_H \bone_E \, d\mu
\]
for all $H\in\cG$ and $E\in\cF$. This is satisfied if $\mu(H) = 0$.
For other $H$, we can divide by $\mu(H)$ and express the required identity
as
\[
\frac{\E_\mu(\bone_H \hat\nu(E))}{\mu(H)} =
\frac{\mu(E \cap H)}{\mu(H)}
\]
for all $E\in\cF$. We see that both sides of the above identity are
probability measures on $(\Omega, \cF)$: the left-hand side is countably
additive, and equal to~1 for $E=\Omega$, since $\hat\nu$ is a probability
measure, while the right-hand side is the probability measure conditional
on~$H$. Moreover the two measures agree on $\cC$, as we established in
Theorem~\ref{thm:nu}. Since $\cC$ is a $\pi$-system generating~$\cF$,
this implies that the two measures are equal (see, e.g., Lemma~1.6
in Williams~\cite{williams}).

Thus, for each fixed $E\in\cF$, and each order-invariant measure $\mu$,
$\hat\nu(E)$ is indeed a version of $\mu(E\mid\cG)$.

Combining this with Theorem~\ref{thm:nu} tells us
that, for each $E\in\cF$, and each order-invariant measure $\mu$,
%
\begin{equation} \label{eq:hatnu}
\hat\nu(E)(\omega) = \mu(E\mid\cG)(\omega) = \nu(E)(\omega) =
\lim_{k\to\infty} \nu^k(E)(\omega), \qquad   \mu\mbox{-a.s.}
\end{equation}

Now let $\tilde{\cC}$ be the family of events of the form
$E(B_1\cdots B_n,<^{[n]})$, where the $B_i$ are open intervals in
$[0,1]$ with rational endpoints in $[0,1]$ (including half-open
intervals with endpoints~0 or~1). Note that $\tilde{\cC}$ is a
countable family, and a basis for the product topology on $\Omega$.
Further, $\tilde{\cC}$ is a $\pi$-system. By Theorem 2.2 in
Billingsley~\cite{billingsley} (see also Examples 1.2 and 2.4 therein),
for weak convergence of a sequence of probability measures to a
probability measure, it is enough to verify convergence on the sets in
$\tilde{\cC}$. Let
\[
{\tilde\Omega}_0 = \{ \omega\in\Omega_0\dvtx \nu^k (E) (\omega) \to
\hat\nu(E)(\omega) \mbox{ } \forall E \in\tilde{\cC}\}.
\]
By (\ref{eq:hatnu}) and the choice of $\tilde{\cC}$ to be countable, we
have that $\mu({\tilde\Omega}_0)=1$, for any order-invariant measure
$\mu$.

Now, for $\omega\in{\tilde\Omega}_0$, since $\hat\nu(\cdot
)(\omega)$ and
all of the $\nu^k(\cdot)(\omega)$ are probability measures, and
$\nu^k(E)(\omega) \to\hat\nu(E)(\omega)$ for all $E \in\tilde
{\cC}$, we
deduce that $\nu^k(\cdot)(\omega) \Rightarrow\hat\nu(\cdot
)(\omega)$.

For each fixed $\ell$, all the measures $\nu^k(\cdot)(\omega)$ for
$k \ge\ell$ are $\Lambda^\ell$-invariant,\vspace*{1pt} by
Theorem~\ref{thm:lambda-k-invariant}. By Theorem~\ref{thm:closed}, it
follows that, for $\omega\in{\tilde\Omega}_0$, the weak limit
$\hat\nu(\cdot)(\omega)$ is $\Lambda^\ell$-invariant for each
$\ell$, and
so is order-invariant.
\end{pf}

Using this result, we obtain a fourth equivalent condition for an
order-invariant measure $\mu$ to be extremal. This condition is a weak
version of the property of being essential, which can be easier to
check, as we shall see shortly.

\begin{corollary} \label{cor:basic}
Let $\cH$ be the family of basic events $E = E(B_1\cdots B_n,\break <^{[n]})$
where each $B_i$ is a closed interval with rational endpoints.
Suppose that~$\mu$ is an order-invariant measure such that, for each
$E \in\cH$, $\nu^k(E) \to\mu(E)$ a.s. Then $\mu$ is essential, and
therefore extremal.
\end{corollary}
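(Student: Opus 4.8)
The plan is to leverage Theorem~\ref{thm:weak}, which already supplies, for our order-invariant measure $\mu$, a family $[\hat\nu(\cdot)(\omega)]_{\omega\in\Omega}$ of probability measures on $(\Omega,\cF)$ with the property that, for each fixed $E\in\cF$, equation~(\ref{eq:hatnu}) holds: $\hat\nu(E)(\omega)=\nu(E)(\omega)=\lim_{k\to\infty}\nu^k(E)(\omega)$ for $\mu$-a.e.\ $\omega$. Applying this with each $E\in\cH$ and invoking the hypothesis $\nu^k(E)\to\mu(E)$ a.s., we obtain that $\hat\nu(E)(\omega)=\mu(E)$ for $\mu$-a.e.\ $\omega$, for every $E\in\cH$. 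Since $\cH$ is a countable family, there is a single set $\Omega'\in\cF$ with $\mu(\Omega')=1$ such that $\hat\nu(E)(\omega)=\mu(E)$ holds simultaneously for all $E\in\cH$ whenever $\omega\in\Omega'$. The goal is then to upgrade this to $\hat\nu(\cdot)(\omega)=\mu(\cdot)$ as measures for $\omega\in\Omega'$; for this it suffices to know that $\cH$ is a $\pi$-system generating $\cF$, since two probability measures that agree on such a class coincide.

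So the substantive step is to verify that $\cH$ is a $\pi$-system with $\sigma(\cH)=\cF$. For the $\pi$-system property, take two members of $\cH$, say $E(B_1\cdots B_n,<^{[n]})$ and $E(C_1\cdots C_m,<^{[m]})$ with $n\le m$ (here $<^{[m]}$ is a suborder of $[m]$ not assumed to extend $<^{[n]}$). Their intersection is empty unless $<^{[m]}_{[n]}=<^{[n]}$, in which case it equals $E(B_1\cap C_1,\dots,B_n\cap C_n,C_{n+1},\dots,C_m,<^{[m]})$; as the intersection of two closed intervals with rational endpoints is again such an interval (possibly degenerate or empty), this set lies in $\cH$ (or is empty). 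For $\sigma(\cH)=\cF$, note first that $[0,1]$ is itself a closed interval with rational endpoints, and that the closed intervals with rational endpoints form a $\pi$-system generating the Borel $\sigma$-field $\cB$ on $[0,1]$. A routine Dynkin-system argument, applied to one coordinate at a time with $<^{[n]}$ fixed, then shows that every basic event $E(B_1\cdots B_n,<^{[n]})$ with Borel sets $B_i$ lies in $\sigma(\cH)$; since such basic events generate $\cF$, we conclude $\sigma(\cH)=\cF$. Hence, for $\omega\in\Omega'$, the probability measures $\hat\nu(\cdot)(\omega)$ and $\mu(\cdot)$ agree on a $\pi$-system generating $\cF$, so they are equal.

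It remains to assemble the conclusion. Fix any $E\in\cF$. For $\omega\in\Omega'$ we have $\hat\nu(E)(\omega)=\mu(E)$, so $\hat\nu(E)=\mu(E)$ $\mu$-a.s.; combining with equation~(\ref{eq:hatnu}) gives $\nu(E)=\lim_{k\to\infty}\nu^k(E)=\mu(E)$ $\mu$-a.s. Since $E\in\cF$ was arbitrary, $\mu$ is essential, and therefore, by the implication (iii)$\Rightarrow$(i) in Theorem~\ref{thm:equivalence}, $\mu$ is extremal. The only real obstacle here is the bookkeeping in the middle paragraph: one must treat basic events of differing arities via the compatibility condition $<^{[m]}_{[n]}=<^{[n]}$ on the orders, and must carry out the standard monotone-class passage from closed rational intervals to arbitrary Borel coordinate-sets — both entirely routine, but worth recording carefully.
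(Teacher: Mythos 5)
Your argument is correct and follows essentially the same route as the paper's proof: invoke Theorem~\ref{thm:weak}, use countability of $\cH$ to get a single full-measure set on which $\hat\nu(E)(\omega)=\mu(E)$ for all $E\in\cH$, conclude $\hat\nu(\cdot)(\omega)=\mu(\cdot)$ because $\cH$ determines measures, and then deduce that $\mu$ is essential and hence extremal via Theorem~\ref{thm:equivalence}. The only difference is that where the paper simply asserts $\cH$ is a (countable) separating class, you verify this explicitly by checking that $\cH$ is a $\pi$-system generating $\cF$ and appealing to the standard uniqueness lemma, which is a fair elaboration of the same step.
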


\begin{pf}
The property we need of $\cH$ is that it is a countable separating class.

Let $[\hat\nu(\cdot)(\omega)]_{\omega\in\Omega}$ be the family of
order-invariant measures guaranteed by Theorem~\ref{thm:weak}.
For each $E \in\cH$, we have that
$\hat\nu(E)(\omega) = \nu(E)(\omega) = \mu(E)$ for $\mu$-almost every
$\omega$. As $\cH$ is countable, this implies that, a.s.,
$\hat\nu(E) = \mu(E)$ for all $E \in\cH$. Since $\cH$ is a separating
class, and $\hat\nu$ and $\mu$ are both measures, this implies that
$\hat\nu= \mu$ a.s. Now, for any $E \in\cF$, an application of
Theorem~\ref{thm:weak} gives that $\nu(E) = \hat\nu(E) = \mu(E)$
a.s., so
$\mu$ is essential, as claimed.
\end{pf}

To illustrate some of the subtleties involved here, we consider processes
where the partial order $<^\N$ generated is a.s.\ an antichain, as in
Example~\ref{ex3}. Fix any element $\omega=(x_1x_2\cdots, <^\N)$ of $\Omega$,
where $<^\N$ is the antichain on $\N$, and $x_1,x_2,\ldots$ is a sequence
of distinct elements of $[0,1]$.

For a Borel subset $B$, $E(B) = E(B,<^\N_{[1]})$ is the event that the\vspace*{-2pt}
first element is in $B$. Now, for any $k$, $\nu^k(E(B))(\omega)$ is the
proportion of the elements $x_1, \ldots, x_k$ that lie in $B$. So
$\nu^k(E(\{x_j\}))(\omega)=1/k \to0$ as $k \to\infty$, for each fixed
$j$, yet $\nu^k(E(\{x_1,x_2, \ldots\}))(\omega) = 1$ for all $k$.

So, if we have a process that generates an antichain a.s., then we can
never have a measure $\hat\nu$ such that
$\nu^k(E)(\omega) \to\hat\nu(E)$ for \textit{every} set $E \in\cF$.
However, such sequences $\nu^k(\cdot)(\omega)$ may have weak limits.
Indeed, weak convergence to a measure $\hat\nu$ only guarantees
convergence on $\hat\nu$-continuity sets $E$, that is, sets $E$ whose
boundary $\partial E$ satisfies $\hat\nu(\partial E)=0$.

To be specific, consider the process that assigns independent uniform
labels from $[0,1]$ to the elements as they are generated. Then, for any
Borel subset $B$ of $[0,1]$, $\nu^k(E(B))$ is the proportion of elements
of $B$ among $X_k = \{x_1, \ldots, x_k\}$, and, by the strong law of large
numbers, $\nu^k(E(B)) \to|B|$ a.s., where $|\cdot|$ denotes Lebesgue
measure. Similarly, for $k\ge n$ and $<^{[n]}$ the antichain on $[n]$,
$\nu^k(E(B_1\cdots B_n, <^{[n]}))$ is the proportion of $n$-tuples of
distinct elements $(x_{i_1},\ldots,x_{i_n})$ from the set $X_k$ such that
$x_{i_j} \in B_j$ for each $j=1, \ldots, n$. This proportion tends to
$|B_1|\cdots|B_n|$ a.s.\ (and the limit is equal to~0 if $<^{[n]}$ is not
the antichain). The sequence $\nu^k(\cdot)$ thus a.s.\ converges weakly\vspace*{1pt}
to the product Lebesgue measure on $[0,1]^\N$. The process described here
is essential, and therefore extremal, by Corollary~\ref{cor:basic}.

This phenomenon can also be seen in otherwise well-behaved examples.
For instance, in Example~\ref{ex1}, where we have an order-invariant measure
on the fixed causal set $P$, let $\omega= (a_1a_2 \cdots)$---as before,
the order is implied---and let $E$ be the event
$\{ \omega=(x_1x_2\cdots) \in\Omega\dvtx x_i =a_i \mbox{ for all but
finitely many } i\}$. Then $\nu^k(E)(\omega) = 1$ for all $k$; however
$\nu^k(\cdot)(\omega) \Rightarrow\mu$, and $\mu(E) = 0$.

In Theorems~\ref{thm:nu} and~\ref{thm:weak}, we showed that, for each
fixed $E\in\cF$, and any order-invariant measure $\mu$,
$\nu^k(E)(\omega)$ tends $\mu$-a.s.\ to $\hat\nu(E)(\omega)$, where
$\hat\nu(\cdot)(\omega)$ is $\mu$-a.s.\ an order-invariant
measure. We
will now show that the measures $\hat\nu(\cdot)(\omega)$ are $\mu
$-a.s.\
extremal; it will follow that an order-invariant measure $\mu$ can be
decomposed uniquely as a mixture of these extremal order-invariant
measures.

Similar results are proved in Chapter~7 of Georgii~\cite{Georgii}.
Instead of using these results, we shall apply a result of Berti and
Rigo~\cite{berti-rigo} giving a ``conditional 0--1 law.''

The function taking $\omega\in\Omega$ to $\hat\nu(\cdot)(\omega
)$ is a
\textit{regular conditional distribution} for any order-invariant measure
$\mu$ given $\cG$: that is, each $\hat\nu(\cdot)(\omega)$ is a~%
probability measure, and that $\hat\nu(E) = \mu(E \mid\cG)$ $\mu$-a.s.,
for all $E \in\cF$---which we showed in Theorem~\ref{thm:weak}.
Moreover, as $\hat\nu(E)(\omega)$ is independent of the particular
order-invariant measure $\mu$, the tail $\sigma$-field $\cG$ is
\textit{sufficient} for the collection~$\cP$ of order-invariant measures.

The following result is (essentially) Lemma~5 of~\cite{berti-rigo}, which
in turn is adapted from a result of Maitra~\cite{maitra}.

\begin{lemma}[(Berti--Rigo)] \label{lem:berti-rigo}
Let $\cF$ be a countably-generated $\sigma$-field of subsets of a set
$\Omega$. Let $\Lambda$ be a countable set of $\cF$-measurable functions,
and let $\cP$ be the family of $\Lambda$-invariant probability
measures on
$(\Omega, \cF)$. Let $\cG$ be a sub-$\sigma$-field of $\cF$ that is
sufficient for $\cP$, and let $\hat\nu(\cdot)(\cdot)$ be a regular
conditional distribution for all $\mu\in\cP$ given $\cG$.

Then, for any $\mu\in\cP$, there is a set $G \in\cG$ with $\mu(G)
= 1$
such that $\hat\nu(H)(\omega) \in\{ 0,1\}$ for all $H \in\cG$ and
$\omega\in G$.
\end{lemma}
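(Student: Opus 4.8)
The plan is to compress the (typically non‑countably‑generated) $\sigma$-field $\cG$ down to a countably generated sub‑$\sigma$-field $\cG_0$ on which a zero‑one law is routine, and then to bootstrap back up to all of $\cG$ using the self‑referential nature of conditional distributions together with the hypothesis that the fibres $\hat\nu(\cdot)(\omega)$ again lie in $\cP$. Concretely: since $\cF$ is countably generated, fix a countable $\pi$-system $\cC$ with $\sigma(\cC)=\cF$ and set $\cG_0 = \sigma\bigl(\hat\nu(E)(\cdot) : E \in \cC\bigr)$. As each $\omega \mapsto \hat\nu(E)(\omega)$ is $\cG$-measurable (part of $\hat\nu$ being a regular conditional distribution given $\cG$), we have $\cG_0 \subseteq \cG$, and $\cG_0$ is countably generated. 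A Dynkin-class argument shows that $\mu'\mapsto\mu'(H)$ is measurable for the $\sigma$-field on probability measures generated by the evaluations at members of $\cC$, for every $H \in \cF$; composing with $\omega\mapsto\hat\nu(\cdot)(\omega)$ gives that $\hat\nu(H)(\cdot)$ is $\cG_0$-measurable for every $H\in\cF$. Together with $\cG_0\subseteq\cG$, this shows $\hat\nu$ is also a common regular conditional distribution for all $\mu\in\cP$ given $\cG_0$.

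Next I would establish triviality of the fibres on $\cG_0$, and an idempotence property. Let $\cA_0$ be the countable algebra generating $\cG_0$; for $A\in\cA_0\subseteq\cG$ one has $\hat\nu(A)=\mu(A\mid\cG)=\bone_A$ $\mu$-a.s., and intersecting these countably many exceptional null sets produces $G_1\in\cG_0$ with $\mu(G_1)=1$ on which $\hat\nu(A)(\omega)=\bone_A(\omega)\in\{0,1\}$ for all $A\in\cA_0$. Since $\{B:\rho(B)\in\{0,1\}\}$ is a $\sigma$-field for any probability measure $\rho$, for $\omega\in G_1$ the measure $\hat\nu(\cdot)(\omega)$ is $\{0,1\}$-valued on all of $\cG_0$. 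For idempotence, apply the tower property of the regular conditional distribution given $\cG_0$ to the bounded $\cG_0$-measurable functions $\hat\nu(E)$ and $\hat\nu(E)^2$: for $\mu$-a.e.\ $\omega$ one gets $\int\hat\nu(E)\,d\hat\nu(\cdot)(\omega)=\hat\nu(E)(\omega)$ and $\int\hat\nu(E)^2\,d\hat\nu(\cdot)(\omega)=\hat\nu(E)(\omega)^2$, so $\hat\nu(E)$ has $\hat\nu(\cdot)(\omega)$-variance $0$. Intersecting over $E\in\cC$ and using that $\cC$ is a generating $\pi$-system yields $G_2\in\cG$ with $\mu(G_2)=1$ such that, for $\omega\in G_2$, $\hat\nu(\cdot)(\omega')=\hat\nu(\cdot)(\omega)$ for $\hat\nu(\cdot)(\omega)$-a.e.\ $\omega'$.

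It remains to see that the fibres lie in $\cP$ and to combine everything. For $\lambda\in\Lambda$ and $E\in\cC$, the $\Lambda$-invariance of $\mu$ (together with the $\Lambda$-invariance of $\cG$, which in our situation holds because $\cG$ consists of $\Lambda$-invariant sets) gives $\hat\nu(\lambda^{-1}(E))=\mu(\lambda^{-1}(E)\mid\cG)=\mu(E\mid\cG)=\hat\nu(E)$ $\mu$-a.s.; intersecting over the countably many pairs $(\lambda,E)$ gives $G_3\in\cG$ with $\mu(G_3)=1$ on which $\hat\nu(\cdot)(\omega)\in\cP$. Set $G=G_1\cap G_2\cap G_3\in\cG$, so $\mu(G)=1$. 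Fix $\omega\in G$, write $\rho=\hat\nu(\cdot)(\omega)\in\cP$, and let $H\in\cG$. Since $\rho\in\cP$, $\hat\nu$ is a regular conditional distribution for $\rho$ given $\cG$, so $\hat\nu(H)=\bone_H$ $\rho$-a.s.; and since $\omega\in G_2$, $\hat\nu(H)(\omega')=\rho(H)$ for $\rho$-a.e.\ $\omega'$. Evaluating both at a $\rho$-typical $\omega'$ forces $\rho(H)=\bone_H(\omega')\in\{0,1\}$, i.e.\ $\hat\nu(H)(\omega)\in\{0,1\}$ for every $H\in\cG$, as required.

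The main obstacle is exactly this last passage. Triviality on the countably generated $\cG_0$ is immediate, but $\cG$ itself is in general not countably generated, so one cannot simply intersect over $H\in\cG$ to obtain a single $\mu$-null exceptional set; for a fixed $H$ one gets $\hat\nu(H)(\omega)\in\{0,1\}$ off an $H$-dependent null set, and that is not enough. The device of combining the idempotence of $\hat\nu$ with the fact that each fibre $\rho=\hat\nu(\cdot)(\omega)$ is itself a $\Lambda$-invariant measure — so that $\hat\nu$ is $\rho$'s \emph{own} conditional distribution given $\cG$, making $\hat\nu(H)=\bone_H$ $\rho$-a.s.\ and $\hat\nu(H)$ $\rho$-a.s.\ constant simultaneously — is what upgrades the per-$H$ statement to one uniform over all of $\cG$ with exceptional set depending only on $\cG$; this is the ingredient adapted from Maitra's work. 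A secondary point to verify carefully is that the auxiliary sets $G_1,G_2,G_3$ really lie in $\cG$, which holds because each is a countable combination of the $\cG$-measurable functions $\hat\nu(E)(\cdot)$ and of integrals against the kernel $\hat\nu(\cdot)(\cdot)$.
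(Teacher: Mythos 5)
You should first note that the paper itself gives no proof of this lemma: it is quoted, with attribution, as (essentially) Lemma~5 of Berti--Rigo, which in turn adapts Maitra, so there is no in-paper argument to compare against. Your reconstruction follows what is essentially that known route: reduce to the countably generated $\sigma$-field $\cG_0=\sigma(\hat\nu(E):E\in\cC)\subseteq\cG$, on which $\hat\nu$ is still a common regular conditional distribution; establish the idempotence property ($\hat\nu(\cdot)(\omega')=\hat\nu(\cdot)(\omega)$ for $\hat\nu(\cdot)(\omega)$-a.e.\ $\omega'$) via the zero-variance computation; show that $\mu$-a.e.\ fibre lies in $\cP$; and then use the self-referential step that, by sufficiency, $\hat\nu$ is also a regular conditional distribution for its own fibre $\rho=\hat\nu(\cdot)(\omega)$, so that for each $H\in\cG$ both $\hat\nu(H)=\bone_H$ and $\hat\nu(H)=\rho(H)$ hold $\rho$-a.s., forcing $\rho(H)\in\{0,1\}$ with an exceptional set not depending on $H$. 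The measurability claims ($\hat\nu(H)$ is $\cG_0$-measurable for all $H\in\cF$; $G_1,G_2,G_3\in\cG$), the variance argument, and the final step are all correct, and you correctly identify why the per-$H$ null sets cannot simply be intersected.

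The genuine gap is in the step producing $G_3$ (fibres in $\cP$). You derive $\mu(\lambda^{-1}(E)\mid\cG)=\mu(E\mid\cG)$ from ``the $\Lambda$-invariance of $\cG$'', i.e.\ from the assumption that every set in $\cG$ is $\Lambda$-invariant. That hypothesis is not part of the lemma as stated, which assumes only that $\cG$ is sufficient for $\cP$, and the intermediate claim genuinely fails without it: take $\cG=\cF$ and $\hat\nu(\cdot)(\omega)=\delta_\omega$, which is a common regular conditional distribution for every probability measure (so $\cF$ is sufficient in the sense used here), yet the fibres $\delta_\omega$ are not $\Lambda$-invariant whenever $\lambda\in\Lambda$ moves points charged by $\mu$. (In that example the conclusion of the lemma holds trivially, so it does not contradict Berti--Rigo; it only shows that your route to it requires the extra hypothesis.) What you have therefore proved is the lemma under the additional assumption that $\cG$ consists of $\Lambda$-invariant sets --- which is exactly the situation in which the paper applies it, since there $\cG$ is the tail $\sigma$-field $\{H\in\cF:\lambda^{(i)}(H)=H \mbox{ for all } i\}$ --- but not the literal abstract statement; to get that, you must either add the hypothesis explicitly or follow the precise formulation and argument in Berti--Rigo/Maitra.
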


Evidently all the conditions of this lemma are satisfied in our setting,
and so the conclusion holds: it says that the probability measure
$\hat\nu(\cdot)(\omega)$ is tail-trivial, for $\mu$-almost every
$\omega$.

%
%
%


We now show that every order-invariant measure can be written uniquely as
a mixture of extremal order-invariant measures.

\begin{corollary} \label{cor:decomposition}
For any order-invariant measure $\mu$, there is a family
$[\tilde\nu(\cdot)(\omega)]_{\omega\in\Omega}$ of extremal
order-invariant probability measures on $(\Omega, \cF)$, with
$\tilde\nu= \hat\nu$, $\mu$-a.s., such that $\mu$ can be
decomposed as
%
\begin{equation}
\label{eq:decomposition}
\mu(\cdot) = \int\tilde\nu(\cdot)(\omega)\, d\mu(\omega).
\end{equation}

Moreover, this is the unique decomposition of $\mu$ as a mixture of
extremal order-invariant measures, up to a.s.
\end{corollary}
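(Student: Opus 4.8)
The plan is to read the decomposition off the family $[\hat\nu(\cdot)(\omega)]_{\omega\in\Omega}$ produced in Theorem~\ref{thm:weak}. First I would combine the three ingredients already in hand: Theorem~\ref{thm:weak} gives a $\cG$-set of full $\mu$-measure on which $\hat\nu(\cdot)(\omega)$ is order-invariant, Lemma~\ref{lem:berti-rigo} gives a $\cG$-set $G$ with $\mu(G)=1$ on which every $\hat\nu(\cdot)(\omega)$ is tail-trivial, and Theorem~\ref{thm:equivalence} then makes $\hat\nu(\cdot)(\omega)$ extremal for $\omega$ in the intersection, which I relabel $G$. Fixing one point $\omega_*\in G$ (possible since $\mu(G)=1>0$), I set $\tilde\nu(\cdot)(\omega)=\hat\nu(\cdot)(\omega)$ for $\omega\in G$ and $\tilde\nu(\cdot)(\omega)=\hat\nu(\cdot)(\omega_*)$ otherwise. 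Then every $\tilde\nu(\cdot)(\omega)$ is an extremal order-invariant measure, $\omega\mapsto\tilde\nu(E)(\omega)$ is $\cG$-measurable for each $E$ (it agrees with the $\cG$-measurable $\hat\nu(E)$ on the $\cG$-set $G$ and is constant off it), and $\tilde\nu=\hat\nu$ $\mu$-a.s.

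To obtain (\ref{eq:decomposition}) I would simply integrate. For each fixed $E\in\cF$, Theorem~\ref{thm:weak} gives $\tilde\nu(E)=\hat\nu(E)=\mu(E\mid\cG)$ $\mu$-a.s., so $\int\tilde\nu(E)(\omega)\,d\mu(\omega)=\E_\mu[\mu(E\mid\cG)]=\mu(E)$ by the tower property. The set function $E\mapsto\int\tilde\nu(E)(\omega)\,d\mu(\omega)$ is countably additive by monotone convergence, hence a probability measure, and it agrees with $\mu$ on all of $\cF$; this is (\ref{eq:decomposition}).

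The substance of the corollary is uniqueness, and here is how I would argue it. Suppose $\mu(\cdot)=\int\nu'(\cdot)(\omega)\,d\mu(\omega)$ is any representation with each $\nu'(\cdot)(\omega)$ extremal order-invariant. The key observation is that \emph{every} order-invariant measure $\sigma$ gives full mass to the $\cG$-set $\Omega_0$ on which $\nu^k(F)$ converges for all $F$ in the countable core $\cC$ --- the backward-martingale argument behind Theorems~\ref{thm:nu} and~\ref{thm:weak} uses nothing about the measure beyond order-invariance --- and there the limit is the $\cG$-measurable function $\nu(F)=\hat\nu(F)$. Taking $\sigma=\nu'(\cdot)(\omega)$ and using that $\sigma$ is \emph{essential} (Theorem~\ref{thm:equivalence}), this same limit must equal $\sigma(F)=\nu'(F)(\omega)$ at $\sigma$-a.e.\ point; comparing the two limits on the countable, generating family $\cC$ yields that, for $\mu$-a.e.\ $\omega$, $\hat\nu(\cdot)(\omega')=\nu'(\cdot)(\omega)$ for $\nu'(\cdot)(\omega)$-a.e.\ $\omega'$. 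Pushing the identity $\mu=\int\nu'(\cdot)(\omega)\,d\mu(\omega)$ forward under the $\cG$-measurable map $\omega'\mapsto\hat\nu(\cdot)(\omega')$ then shows that the $\mu$-law of $\omega\mapsto\nu'(\cdot)(\omega)$ equals the $\mu$-law of $\omega\mapsto\hat\nu(\cdot)(\omega)$ --- this is uniqueness of the mixing measure.

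To upgrade this to the stated pointwise equality $\tilde\nu=\hat\nu$ $\mu$-a.s., I would combine the identity just established with tail-triviality of $\nu'(\cdot)(\omega)$ to get, for $H\in\cG$ and $E\in\cF$, that $\nu'(E\cap H)(\omega)=\nu'(H)(\omega)\,\nu'(E)(\omega)$ with $\nu'(H)(\omega)\in\{0,1\}$; integrating against $\mu$ and playing this against the fact that $\hat\nu$ is a regular conditional distribution given $\cG$ \emph{simultaneously} for all order-invariant measures (Theorem~\ref{thm:weak}) should identify $\omega\mapsto\nu'(E)(\omega)$ as a version of $\mu(E\mid\cG)=\hat\nu(E)$, and running over a countable generating family then forces $\nu'=\hat\nu=\tilde\nu$ $\mu$-a.s. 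I expect this last disintegration step to be the main obstacle: turning ``$\nu'(\cdot)(\omega)$ is carried, for $\mu$-a.e.\ $\omega$, by the $\cG$-set $\{\omega':\hat\nu(\cdot)(\omega')=\nu'(\cdot)(\omega)\}$'' together with the decomposition of $\mu$ into a genuinely \emph{pointwise} identification, rather than mere equality of the induced mixing measures, which takes some care with the measurability of the family and of the $\sigma$-fields involved --- this is exactly the point handled in the uniqueness half of the extreme-decomposition results in Chapter~7 of Georgii~\cite{Georgii}.
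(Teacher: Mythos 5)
Your existence argument coincides with the paper's: the same set $G$ from Lemma~\ref{lem:berti-rigo}, the same modification of $\hat\nu$ off $G$ (the paper plugs in a fixed extremal measure $\nu_1$, you plug in $\hat\nu(\cdot)(\omega_*)$ -- immaterial), and the same derivation of (\ref{eq:decomposition}) from $\hat\nu(E)=\mu(E\mid\cG)$ and the tower property. Where you genuinely diverge is the uniqueness half. The paper disposes of it by observing that $\tilde\nu(\cdot)(\cdot)$ is a $(\cP,\cG)$-kernel and citing Proposition~7.22 / Theorem~7.26 of Georgii~\cite{Georgii}; you instead give the underlying argument directly: every extremal order-invariant $\sigma$ is essential (Theorem~\ref{thm:equivalence}), hence, intersecting over the countable core $\cC$ and using Theorem~\ref{thm:weak}, $\sigma$ is concentrated on the $\cG$-set $\{\omega':\hat\nu(\cdot)(\omega')=\sigma\}$; pushing any decomposition of $\mu$ forward under the map $\omega'\mapsto\hat\nu(\cdot)(\omega')$ (measurable for the evaluation $\sigma$-field, since each $\hat\nu(E)$ is $\cG$-measurable) then identifies its mixing law with the $\mu$-law of $\hat\nu(\cdot)(\cdot)$. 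That is a correct, self-contained proof of exactly what the corollary asserts -- uniqueness of the mixing measure on the set of extremal order-invariant measures -- and it buys independence from Georgii's Chapter~7 at the cost of the measurability bookkeeping you already gesture at.

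Your final paragraph, however, aims at something stronger than the statement claims, and you should drop it: it is not only unnecessary but false in the generality you pose it. The clause ``$\tilde\nu=\hat\nu$, $\mu$-a.s.''\ in the corollary describes the \emph{constructed} family and holds by construction; ``unique up to a.s.''\ refers to the mixing measure, which your push-forward argument has already pinned down. One cannot upgrade this to the assertion that any kernel $\nu'(\cdot)(\omega)$ of extremal order-invariant measures with $\mu=\int\nu'(\cdot)(\omega)\,d\mu(\omega)$ must agree with $\hat\nu(\cdot)(\omega)$ for $\mu$-a.e.\ $\omega$. Indeed, take $\mu=\tfrac12\sigma_1+\tfrac12\sigma_2$ with $\sigma_1\neq\sigma_2$ extremal, let $A=\{\omega:\hat\nu(\cdot)(\omega)=\sigma_1\}\in\cG$ (so $\mu(A)=\tfrac12$), and set $\nu'(\cdot)(\omega)=\sigma_2$ on $A$ and $\sigma_1$ on $A^c$: this $\nu'$ is $\cG$-measurable, takes extremal order-invariant values, and integrates to $\mu$, yet differs from $\hat\nu$ at $\mu$-almost every $\omega$. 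So the decomposition identity alone does not make $\nu'(E)$ a version of $\mu(E\mid\cG)$; only the induced mixing measure is determined, which is all the corollary (and the paper's own proof via Georgii) claims.
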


\begin{pf}
Given an order-invariant measure $\mu$, let $G \in\cG$ be the set
guaranteed in Lemma~\ref{lem:berti-rigo}, with $\mu(G) = 1$ and
$\hat\nu(H)(\omega) \in\{0,1\}$ for all $H \in\cG$ and $\omega
\in G$.
For $\omega\in G$, $\hat\nu(\cdot)(\omega)$ is an extremal
order-invariant measure, by Theorem~\ref{thm:equivalence}.

Now let $\nu_1$ be some particular extremal order-invariant measure, and
define
\[
\tilde\nu(\cdot)(\omega) =
\cases{
\hat\nu(\cdot)(\omega), &\quad if $ \omega\in G$, \cr
\nu_1(\cdot), &\quad otherwise.
}
\]

By properties of conditional expectation, we have that, for all
$E \in\cF$, $\mu(E) = \E_\mu(\mu(E\mid\cG))$, which means that
\[
\mu(E) = \int\mu(E \mid\cG)(\omega) \, d\mu(\omega) =
\int\hat\nu(E)(\omega) \, d\mu(\omega)
\]
by Theorem~\ref{thm:weak}. We can write this in terms of the
measures as
\[
\mu(\cdot) = \int\hat\nu(\cdot) (\omega) \, d\mu(\omega).
\]
Since $\hat\nu(\cdot)(\omega) = \tilde\nu(\cdot)(\omega)$ for
$\mu$-almost every $\omega$, we also have the stated decomposition of
$\mu$, solely in terms of the extremal order-invariant measures
$\tilde\nu(\cdot)(\omega)$.

For uniqueness, we remark that $\tilde\nu(\cdot)(\cdot)$ is a
$(\cP,\cG)$-kernel, as in Definition~7.21 in~\cite{Georgii}.
Let $\cP_\cG$ be the family of tail-trivial (equivalently, extremal)
order-invariant measures. By Proposition~7.22 (or by Proposition~7.25,
Theorem~7.26 and comments at the end of Section~6.3) in
Georgii~\cite{Georgii}, there is a unique measure $w$ on the set
$\cP_\cG$, with the evaluation $\sigma$-field, such that
\[
\mu= \int_{\cP_\cG} \nu  w(d\nu)
\]
with $w$ given by $w(M) = \mu(\nu\in M)$, for $M$ a set in the
evaluation $\sigma$-field. Therefore, this must be the decomposition
in~(\ref{eq:decomposition}), as required.
\end{pf}

Alternatively, the result above can be deduced from the main result of
Maitra~\cite{maitra}, since $(\Omega,\cF)$ is a \textit{perfect space}.

As an illustration of all the ideas above, we return to Example~\ref{ex4}, where
we studied order-invariant measures on the poset $P$ consisting of two
chains. It is not too hard to see that the order-invariant measures
$\mu_q$, for fixed $q \in[0,1]$, are extremal; moreover, these are the
only order-invariant measures on $P$. (This is proved in detail
in~\cite{BL2}.) We also described the order-invariant measures $\mu
_\rho$,
where $\rho$ is a probability measure on $[0,1]$. These are, by
definition, mixtures of the $\mu_q$: they can be written as
\[
\mu_\rho(\cdot) = \int\mu_q(\cdot) \, d\rho(q).
\]
Corollary~\ref{cor:decomposition} now states that every order-invariant
measure on $P$ can be expressed as $\mu_\rho$, for some probability
measure $\rho$ on $[0,1]$.

Given a causal set $P=(Z,<)$, and an element
$\omega= x_1x_2\cdots\in\Omega$, we say that $\omega$ \textit{generates}
a measure $\mu$ on $(\Omega_P, \cF)$ if $\nu^k(\cdot)(\omega)$ converges
weakly to $\mu$ as $k\to\infty$.

Theorem~\ref{thm:weak} tells us that, if $\mu$ is extremal, then
$\nu^k(\cdot)(\omega)$ converges weakly to $\mu$ a.s.: in other words,
$\mu$-almost all $\omega$ generate $\mu$. In particular, if $\mu$ is
extremal, then $\mu$ is generated by at least one $\omega\in\Omega$.
We suspect that the converse is likely to be true.

\begin{conjecture}
If $\mu$ is an order-invariant measure that is generated by some
$\omega\in\Omega$, then $\mu$ is extremal.
\end{conjecture}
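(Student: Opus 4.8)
The plan is to reduce the conjecture, via Theorem~\ref{thm:equivalence}, to a statement about tail events, and then to extract everything possible from the single hypothesis $\nu^k(\cdot)(\omega)\Rightarrow\mu$; the point where this runs out is exactly where the statement is left as a conjecture. The starting observation is a rigidity identity: fix $H\in\cG$; since $\cG=\bigcap_j\cG_j$ we have $H\in\cG_k$ for all $k$, so by the equivalent description of $\cG_k$ in Section~\ref{sec:invariant}, $\omega'\in H$ implies $\lambda^+[\omega']\in H$ for every linear extension $\lambda$ of $<^\N_{[k]}$. As $\nu^k(\cdot)(\omega')$ is the uniform measure on the points $\lambda^+[\omega']$, it follows (exactly as in the proof of Theorem~\ref{thm:nu-k} with $E=\Omega$) that $\nu^k(H)(\omega')=\bone_H(\omega')$ for \emph{every} $\omega'\in\Omega$ and every $k$. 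In particular $\nu^k(H)(\omega)=\bone_H(\omega)$ for all $k$. Combining this with $\nu^k(\cdot)(\omega)\Rightarrow\mu$ and the closed-set characterisation of weak convergence: if $\omega\notin H$ then $\nu^k(H^\circ)(\omega)=0$ for all $k$, forcing $\mu(H^\circ)=0$ and hence $\mu(H)=\mu(H\cap\partial H)$; if $\omega\in H$ then symmetrically $\mu(\Omega\setminus\overline H)=0$ and $1-\mu(H)=\mu(H^c\cap\partial H)$. Either way
\[
\min\{\mu(H),\,1-\mu(H)\}\;\le\;\mu(\partial H),
\]
so every $\mu$-continuity set lying in $\cG$ has $\mu$-measure $0$ or $1$.

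I would also record a structural fact for later use. Every point of the ``order-orbit'' $S=\{\lambda^+[\omega]:m\in\N,\ \lambda\text{ a linear extension of }<^\N_{[m]}\}$ again generates $\mu$: indeed $\lambda^+$ is a composition of the involutions $\lambda^{(i)}$ with $i<m$, and $\nu^j(E)$ is $\cG_j$-measurable (proof of Theorem~\ref{thm:nu-k}) hence invariant under each such $\lambda^{(i)}$ as soon as $j\ge m$, so $\nu^j(\cdot)(\lambda^+[\omega])=\nu^j(\cdot)(\omega)$ for $j\ge m$ and the two sequences have the same weak limit. Since each $\nu^k(\cdot)(\omega)$ is supported on the finite set $\{\lambda^+[\omega]\}\subseteq S$, the closed-set characterisation of weak convergence also gives $\mu(\overline S)=1$. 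Thus $\mu$ is carried by the closure of a single order-orbit, every point of which generates $\mu$.

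By Theorem~\ref{thm:equivalence} it now suffices, for a \emph{non-extremal} $\mu$, to exhibit a tail event $H\in\cG$ with $0<\mu(H)<1$ and $\mu(\partial H)=0$, since this contradicts the displayed inequality; equivalently, one must show that modulo $\mu$-null sets $\cG$ is generated by its $\mu$-continuity sets. \textbf{This is the hard part}, and is why the statement is only conjectured: a non-trivial tail event is typically topologically wild — both it and its complement are dense in the support of $\mu$, so $\partial H$ can be all of $\Omega$ — and, relatedly, the generation property does not obviously propagate from $S$ to its full-measure closure $\overline S$, because $\nu^k(\cdot)(\omega')$ for large $k$ depends on arbitrarily many coordinates of $\omega'$ while proximity in $\Omega$ pins down only finitely many. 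I see two plausible ways forward. (a) Invoke the structure theorem, Theorem~\ref{thm:extremal}: with the extremal components of $\mu$ written out explicitly (an iid-label part together with a fixed-causal-set part), show that any two distinct extremal order-invariant measures can be separated by a tail event that is, on the union of their supports, a $\mu$-continuity set — separating differing iid label-laws by events on empirical label-distributions tested against a countable atom-avoiding family of continuous functions, and the fixed-causal-set parts by analogous frequency events — so that the decomposition $\mu=\int\tilde\nu(\cdot)(\omega')\,d\mu(\omega')$ concentrates on a genuine $\mu$-continuity tail event as soon as $w=\mu\circ\tilde\nu^{-1}$ fails to be a point mass. (b) Alternatively, bypass continuity sets by proving a pointwise strengthening of the Berti--Rigo $0$--$1$ law (Lemma~\ref{lem:berti-rigo}): that $\hat\nu(H)(\omega')\in\{0,1\}$ for all $H\in\cG$ holds at \emph{every} $\omega'$ generating a measure, not merely $\mu$-almost everywhere, exploiting that $\nu^k(\cdot)(\omega)$ is an average over a single order-orbit and that this average already returns any $\cG_k$-measurable function unchanged. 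Route (a) seems the more likely to succeed, at the cost of making the conjecture a corollary of the full classification rather than a standalone fact.
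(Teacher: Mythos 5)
First, a point of orientation: the statement you were asked about is stated in the paper as a \emph{conjecture}, and the paper offers no proof of it. What the paper does prove is the strictly weaker theorem immediately following, namely that if the set of $\omega$ generating $\mu$ has \emph{positive $\mu$-measure}, then $\mu$ is extremal; that argument goes through Corollary~\ref{cor:decomposition} (a.s.\ the weak limit of $\nu^k(\cdot)(\omega)$ is the extremal measure $\tilde\nu(\cdot)(\omega)$) together with uniqueness of weak limits, and it makes essential use of the generating set having positive measure. Your proposal, to its credit, is honest that it does not close the conjecture either, so there is no pretence of a complete proof to compare with a paper proof that does not exist.

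As for the content: your preliminary steps are correct and worth recording. The rigidity identity $\nu^k(H)(\omega')=\bone_H(\omega')$ for every tail event $H\in\cG$, every $\omega'$ and every $k$ follows exactly as you say from the invariant description of $\cG_k$; the consequence $\min\{\mu(H),1-\mu(H)\}\le\mu(\partial H)$ via the portmanteau theorem is right; and the facts that every point of the order-orbit of $\omega$ generates $\mu$ and that $\mu(\overline S)=1$ are correct. The genuine gap is the one you flag yourself: to conclude via Theorem~\ref{thm:equivalence} you must, for a non-extremal $\mu$, produce a tail event of intermediate measure that is also a $\mu$-continuity set, or otherwise upgrade weak convergence at a single, possibly $\mu$-null, point $\omega$ into set-wise information about $\cG$. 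Neither of your suggested routes is carried out, and route (b) as stated cannot by itself succeed: your own rigidity identity already gives the pointwise $0$--$1$ property of $\nu^k(H)(\omega)$ for free, and the obstruction is precisely that this tells you nothing about $\mu(H)$ unless $H$ is a continuity set --- the same issue you started with, merely relocated. (Route (a) would in any case make the conjecture a consequence of a finer classification than Theorem~\ref{thm:extremal} provides, since that theorem describes the generated poset but not the measures on a fixed causal set, which is exactly where distinct extremal components can be hardest to separate topologically.) So what you have is a correct reformulation of the conjecture plus some true auxiliary facts, not a proof; this matches the paper, which leaves the statement open.
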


The best result we can prove in this direction is the following.

\begin{theorem}
For an order-invariant measure $\mu$, let
$\Omega_0 = \{\omega\dvtx \break\omega\mbox{ generates}~\mu\}$ and suppose
that $\mu(\Omega_0) > 0$. Then $\mu$ is extremal.
\end{theorem}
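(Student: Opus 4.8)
The plan is to identify $\mu$ with one of the measures $\hat\nu(\cdot)(\omega)$ supplied by Theorem~\ref{thm:weak}, and then to read off extremality from the conditional zero--one law of Lemma~\ref{lem:berti-rigo} via the characterisation in Theorem~\ref{thm:equivalence}. The structural point that makes this work is that the family $[\hat\nu(\cdot)(\omega)]_{\omega\in\Omega}$ of Theorem~\ref{thm:weak} is a single family serving \emph{every} order-invariant measure at once; so it suffices to exhibit one $\omega$ at which $\hat\nu(\cdot)(\omega)$ both equals $\mu$ and is tail-trivial.

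First I would apply Theorem~\ref{thm:weak} to $\mu$: there is a set $A\in\cF$ with $\mu(A)=1$ on which $\nu^k(\cdot)(\omega)\Rightarrow\hat\nu(\cdot)(\omega)$. By the definition of $\Omega_0$, for every $\omega\in\Omega_0$ the sequence $\nu^k(\cdot)(\omega)$ also converges weakly to $\mu$. Since $\Omega$ is metrisable (Section~\ref{sec:csp}), weak limits of probability measures are unique, so $\hat\nu(\cdot)(\omega)=\mu$ for every $\omega\in\Omega_0\cap A$. (That $\Omega_0\in\cF$ is routine, given that each $\nu^k(\cdot)(\cdot)$ is a measure kernel by Theorem~\ref{thm:nu-k}: convergence to the fixed measure $\mu$ may be tested against a countable convergence-determining family of bounded continuous functions, and for each such $f$ the map $\omega\mapsto\int f\,d\nu^k(\cdot)(\omega)$ is $\cF$-measurable.) As $\mu(A)=1$, we obtain $\mu(\Omega_0\cap A)=\mu(\Omega_0)>0$.

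Next I would apply Lemma~\ref{lem:berti-rigo}, whose hypotheses hold here as noted immediately after its statement: there is $G\in\cG$ with $\mu(G)=1$ such that $\hat\nu(H)(\omega)\in\{0,1\}$ for all $H\in\cG$ and all $\omega\in G$; equivalently, $\hat\nu(\cdot)(\omega)$ has trivial tails for every $\omega\in G$. Then $\mu(\Omega_0\cap A\cap G)=\mu(\Omega_0)>0$, so this intersection is non-empty; fix any $\omega_0$ in it. For this $\omega_0$ we have $\mu=\hat\nu(\cdot)(\omega_0)$, which has trivial tails, and $\mu$ is order-invariant by hypothesis, so Theorem~\ref{thm:equivalence} gives that $\mu$ is extremal. (One could equally invoke Corollary~\ref{cor:decomposition} here, since it already records that $\hat\nu(\cdot)(\omega)$ is extremal for $\mu$-almost every $\omega$.) I do not expect a substantial obstacle: the argument is essentially bookkeeping with $\mu$-null sets, and the only place where the hypothesis is used in the form stated --- rather than merely as $\Omega_0\ne\emptyset$ --- is the step of intersecting $\Omega_0$ with the two $\mu$-full sets $A$ and $G$ and concluding the result still has positive measure.
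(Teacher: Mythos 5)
Your proof is correct and follows essentially the same route as the paper's: the paper intersects $\Omega_0$ with the $\mu$-full set on which $\nu^k(\cdot)(\omega)$ converges weakly to the extremal measures $\tilde\nu(\cdot)(\omega)$ of Corollary~\ref{cor:decomposition} and identifies $\mu$ with one of them by uniqueness of weak limits, exactly as you do after unpacking $\tilde\nu$ into Theorem~\ref{thm:weak} plus Lemma~\ref{lem:berti-rigo} and Theorem~\ref{thm:equivalence}. The only difference is this bookkeeping of where extremality of the a.s.\ limit is recorded, which you yourself note.
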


\begin{pf}
Consider the family $\tilde\nu(\cdot)(\omega)$ of extremal order-invariant
measures in Corollary~\ref{cor:decomposition}. Set
\[
{\tilde\Omega}_0 = \{ \omega\in\Omega_0 \dvtx \nu^k(\cdot)(\omega)
\Rightarrow\tilde\nu(\cdot)(\omega)\}.
\]
Then $\mu({\tilde\Omega}_0) = \mu(\Omega_0) > 0$, by
Theorem~\ref{thm:weak} and Corollary~\ref{cor:decomposition}. In
particular, ${\tilde\Omega}_0$ is non-empty; for any
$\omega\in{\tilde\Omega}_0$, $\mu$ is the weak limit of the
$\nu^k(\cdot)(\omega)$, and is therefore equal to the extremal
order-invariant measure $\tilde\nu(\cdot)(\omega)$.\vspace*{-2pt}
\end{pf}

\section{Description of extremal order-invariant measures}
\label{sec:deoim}

Our aim in this section is to prove the following result.

\begin{theorem} \label{thm:extremal}
Let $\mu$ be an extremal order-invariant measure. Then there is a poset
$Q=(Z,<)$, either a causal set or a finite poset, with a~marked set $M$ of
maximal elements such that, if $Q'$ is obtained from $Q$ by replacing each
element $z$ of $M$ with a countably infinite antichain $A_z$, then the
poset $\Pi$ generated by $\mu$ is a.s.\ equal to $Q'$, except for the
labels on the antichains~$A_z$.
\end{theorem}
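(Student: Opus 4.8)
The plan is to exploit the two equivalent forms of extremality from Theorem~\ref{thm:equivalence}: that $\mu$ has trivial tails (every $H\in\cG$ has $\mu(H)\in\{0,1\}$) and that $\mu$ is essential ($\nu^k(E)\to\mu(E)$ $\mu$-a.s.\ for every $E\in\cF$). The observation that drives everything is that $\Pi(\lambda^{(i)}[\omega])=\Pi(\omega)$ for all $i$ and all $\omega$, so any event described purely in terms of the labelled poset $\Pi$ is $\Lambda$-invariant, hence lies in $\cG$, hence has $\mu$-probability $0$ or $1$: the structure of $\Pi$ is, in a strong sense, deterministic. First I would isolate the deterministic ground-set: put $Z_0=\{x\in[0,1]:\mu(x\in\Xi)=1\}$. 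Since $\{x\in\Xi\}\subseteq\bigcup_i\{\xi_i=x\}$ and the distribution of each $\xi_i$ has at most countably many atoms, $Z_0$ is countable, and $\mu(Z_0\subseteq\Xi)=1$. For distinct $x,y\in Z_0$ the three events ``$x\prec y$'', ``$y\prec x$'', ``$x,y$ incomparable'' in $\Pi$ are tail events partitioning a $\mu$-full set, so exactly one of them has probability $1$; this defines a partial order $<$ on $Z_0$. As $\Pi$ is a.s.\ a causal set (its ground-set $\Xi$ is a.s.\ countably infinite and each $D_\Pi(\xi_i)\subseteq\{\xi_1,\dots,\xi_{i-1}\}$ is finite), each element of $(Z_0,<)$ lies above only finitely many others, so $(Z_0,<)$ is either a finite poset or a causal set.

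The heart of the proof is to show that a.s.\ every element of $\Xi\setminus Z_0$ is a maximal element of $\Pi$ whose down-set is a finite down-set of $(Z_0,<)$; equivalently, that a.s.\ $D_\Pi(x)\subseteq Z_0$ for every $x\in\Xi$. I would prove this by induction on height in $\Pi$ (heights are finite, since down-sets are). Granting the claim below height $h$: an element $x$ of height $h$ with $x\notin Z_0$ has $D_\Pi(x)$ among the elements of smaller height, none of which lies in $\Xi\setminus Z_0$ and below $x$ (those are maximal, by induction), so $D_\Pi(x)\subseteq Z_0$, and in fact $D_\Pi(x)$ is a down-set $F$ of $(Z_0,<)$. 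That such an $x$ is maximal is where order-invariance re-enters, exactly as in Example~3: the subsequence $\eta^F_1,\eta^F_2,\dots$ of those $\xi_i\notin Z_0$ with $D_\Pi(\xi_i)=F$ is to be shown exchangeable (conditionally on the tail $\sigma$-field) via transpositions of adjacent incomparable elements; the Hewitt--Savage theorem~\cite{HS} together with trivial tails then forces it to be an i.i.d.\ sequence drawn from a single atomless law $\tau_F$ on $[0,1]$. In particular it is infinite, so each non-empty class $A_F:=\{x\in\Xi\setminus Z_0:D_\Pi(x)=F\}$ is a.s.\ countably infinite; and a counting argument --- using that each element of $\Xi$ has a finite down-set while the $\eta^F_n$ are i.i.d.\ and atomless --- shows that no $\eta^F_n$ can lie below any element of $\Xi$, so every $\eta^F_n$ is maximal, completing the induction. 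A further short exchangeability argument shows that the classes $A_F$, $A_{F'}$ for $F\ne F'$ are mutually incomparable in $\Pi$ and that no element of $A_F$ is related to any element of $Z_0$ outside $F$.

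To assemble the conclusion: for each finite down-set $F$ of $(Z_0,<)$, the quantity $|\{x\in\Xi:D_\Pi(x)=F\}|$ is $\cG$-measurable, hence a.s.\ equal to a constant $n_F$; writing $d_F$ for the (deterministic) number of $x\in Z_0$ with $D_{Z_0}(x)=F$, the set $\Xi$ is a.s.\ the disjoint union of these $d_F$ elements of $Z_0$ and the antichain $A_F$, which is empty if $n_F=d_F$ and countably infinite otherwise. Let $Z=Z_0\cup\{z_F:n_F>d_F\}$, ordered by extending $<$ on $Z_0$ with each $z_F$ a new maximal element lying above exactly $F$, and put $Q=(Z,<)$ and $M=\{z_F:n_F>d_F\}$. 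Then $Z$ is countable (the finite down-sets of the countable poset $Z_0$ form a countable family), each element of $Q$ lies above finitely many others, so $Q$ is a finite poset or a causal set; if $Q$ is finite then $Z_0$ is finite, and since $\Xi$ is a.s.\ infinite we must have $M\ne\emptyset$. Replacing each $z_F\in M$ by a countably infinite antichain $A_{z_F}$ yields a poset $Q'$, and by the preceding paragraphs $\Pi=Q'$ a.s., under the identification of $Z_0$ with itself and of each $A_{z_F}$ with the random antichain $A_F$; the only freedom left is the ($\tau_F$-distributed) labels on the antichains, as required.

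The step I expect to be the genuine obstacle is the induction in the second paragraph. Everything deterministic about $\Pi$ drops out of tail-triviality almost for free, but pinning down \emph{where} the randomness may sit --- i.e.\ that the non-deterministic elements form independent infinite antichains perched at fixed finite down-sets of the deterministic skeleton --- requires combining order-invariance (to get exchangeability of each class $A_F$, which already needs care because two $A_F$-elements need not be adjacent in the order of generation), extremality (to collapse the de Finetti mixture of the exchangeable sequence to a single i.i.d.\ law), and a quantitative argument ruling out that an i.i.d.\ ``random'' label could be forced to the bottom of a relation in~$\Pi$.
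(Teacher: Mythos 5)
Your framework (the deterministic skeleton $Z_0$, which coincides with the paper's set $V$ of persistent elements; the tail-triviality argument giving a deterministic order on it; and the final assembly of $Q$, $M$ and $Q'$) matches the paper's, but the crucial middle step is not actually proved, and the route you propose for it does not work. The heart of the matter is showing that a.s.\ no two comparable elements of $\Pi$ both lie outside $Z_0$ (equivalently, every non-persistent element is maximal). You try to get this from exchangeability of the label sequence $(\eta^F_n)$ of elements with down-set $F$, plus Hewitt--Savage and a ``counting argument''. But order-invariance, as defined in~(\ref{order-invariance}), is a symmetry of the \emph{order of generation} given the labelled poset; it gives no symmetry under permuting labels between positions of the poset that are not structurally equivalent. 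Two elements with the same down-set $F$ but different up-sets (one maximal, one with something above it) are exactly such inequivalent positions, so exchangeability of $(\eta^F_n)$ presupposes the antichain structure you are trying to establish --- the argument is circular. (The paper itself only invokes exchangeability and Hewitt--Savage in the discussion \emph{after} Theorem~\ref{thm:extremal}, once maximality of the $A_F$-elements is known.) The ``counting argument'' that an i.i.d.\ atomless label cannot lie below another element is also not supplied, and the genuinely hard configuration --- a two-element chain entirely outside $Z_0$, e.g.\ one randomly labelled element sitting on top of one member of a random antichain --- is untouched by it: interval-halving/tail-triviality arguments of the kind you gesture at (and which the paper does use, for the $0$-or-$\infty$ dichotomy of $\Gamma_D$ and which would also show $D_\Pi(y)\subseteq Z_0$ for fixed $y\in Z_0$) require conditioning on a \emph{fixed} label, which is unavailable when the upper element of the chain is itself non-persistent.

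This missing step is precisely where the paper expends essentially all of its effort: it is proved via the quantitative estimate~(\ref{eq:eps-k}), whose proof combines Fishburn's correlation inequality (Lemma~\ref{lem:corr}, for which, as the paper notes, no elementary proof is known), the combinatorial Lemma~\ref{lem:finite} about ``low'' down-sets, Stanley's log-concavity theorem (Theorem~\ref{thm:stanley}) through Lemma~\ref{lem:q}, and the a.s.\ convergence statements of Lemmas~\ref{lem:gzk} and~\ref{lem:quant} feeding into a careful $\eps$-argument with the events $C_n$, $B^1$, $B^2$, $B^3$. Your proposal contains no substitute for this machinery; indeed you flag this step yourself as ``the genuine obstacle'', which is an accurate self-assessment: as written, the proof has a gap exactly there, and the soft tools you list (order-invariance, extremality via Theorem~\ref{thm:equivalence}, Hewitt--Savage) do not by themselves close it.
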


Probably the most interesting special case is when the set $M$ of marked
maximal elements is empty, so that the extremal measure $\mu$ is an
order-invariant measure on the fixed (labeled) causal set $Q$. As we saw
in Example~\ref{ex4}, and the discussion after Corollary~\ref{cor:decomposition},
not every order-invariant measure on a fixed causal set is extremal---indeed, whenever there is more than one order-invariant measure on a fixed
causal set $P$, a non-trivial convex combination will be non-extremal---so Theorem~\ref{thm:extremal} falls short of characterizing extremal
order-invariant measures.

In our companion paper~\cite{BL2}, we discuss at length the issue of which
fixed causal sets admit an order-invariant measure. We have seen examples
in this paper of causal sets that admit just one order-invariant measure
(Example~\ref{ex1}), many order-invariant measures (Example~\ref{ex4}), or none (e.g., a
labeled antichain: see Example~\ref{ex3}).

The other extreme case is when $Q$ consists of a single marked element
$z$, and so $Q'$ consists of the single antichain $A_z$. As discussed in
Example~\ref{ex3}, an order-invariant measure that a.s.\ generates an antichain is
effectively the same as an exchangeable sequence of random labels $[0,1]$,
and the extremal order-invariant measures correspond to the atomless
probability distributions on $[0,1]$.

Theorem~\ref{thm:extremal} allows intermediate cases as well. For
instance, suppose~$Q$ consists of a chain $y_1<y_2< \cdots$ of elements
from $[0,1]$, together with a~marked element $z_i$ above each $y_i$.
Suppose we are also given atomless probability distributions $W_i$ on
$[0,1]$ for each $i$, and a strictly decreasing sequence of positive real
numbers $1=p_1 > p_2 > \cdots$. Then the following causet process is
order-invariant. Suppose we are at a state in which the elements
$y_1, \ldots, y_{r-1}$ are present, but $y_r$ is not. Then, with
probability $p_r$, select $y_r$ and place it above $y_{r-1}$; for
$j=1, \ldots, r-1$, with probability $p_j-p_{j+1}$, select an element from
$[0,1]$ according to the distribution $W_j$, and place it above~$y_j$. It
is easily checked that this is an order-invariant measure, and indeed that
it is extremal.


Before proving Theorem~\ref{thm:extremal}, we need a number of preliminary
results and definitions.

Our first tools are from the theory of linear extensions of finite posets.
For a finite poset $P=(Z,<)$, let $\nu^P$ denote\vadjust{\goodbreak} the uniform measure on
linear extensions of $P$. We shall denote a uniformly random linear
extension of an $n$-element poset by $\zeta= \zeta_1\cdots\zeta_n$, and
we shall set $\Sigma_i(\zeta) = \{ \zeta_1, \ldots, \zeta_i\}$, the set
consisting of the bottom $i$ elements of a uniformly random linear
extension $\zeta$ of a finite poset. (It is useful to have different
notation for uniformly random linear extensions of finite posets and for
random samples from order-invariant measures on causal sets, as we shall
shortly need to consider both notions simultaneously.)

For a finite poset $P=(Z,<)$, and $z_1,\ldots,z_k \in Z$, let
$E(z_1\cdots z_k)$ be the set of linear extensions of $P$ with initial
segment $z_1\cdots z_k$, so that $\nu^P(E(z_1\cdots z_k))$
is the proportion of linear extensions of $P$ with initial segment
$z_1\cdots z_k$. In particular, $\nu^P(E(z))$ is the probability that a
uniformly random linear extension of $P$ has $z$ as its bottom element.

\begin{lemma} \label{lem:corr}
Let $P=(Z,<)$ be a finite poset, and suppose that $x$ is a~minimal element
of~$P$. Let $D$ be a down-set in $P$, not including $x$. Then
$\nu^P(E(x)) \le\nu^{P\sm D}(E(x))$.
\end{lemma}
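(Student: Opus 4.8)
The plan is to prove the statement in two steps: first reduce to the case where $D$ is a single minimal element of $P$, and then recognise that case as a correlation inequality for numbers of linear extensions. For the reduction I would induct on $|D|$, the case $D=\emptyset$ being trivial. If $D\neq\emptyset$, choose a minimal element $d$ of the finite poset $P_D$. Since $D$ is a down-set of $P$, $d$ is in fact minimal in $P$: anything below $d$ in $P$ lies in $D$ and would contradict the choice of $d$. In particular $d\neq x$ (as $d\in D$, $x\notin D$), and $d$ is incomparable to $x$ because both are minimal. Moreover $D':=D\setminus\{d\}$ is again a down-set of $P$, and hence of $P-d$, not containing $x$: if $a\in D'$ and $b<a$ then $b\in D$, and $b\neq d$ since $b<a$ would contradict the choice of $d$. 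Assuming the one-element case — namely $\nu^P(E(x))\le\nu^{P-d}(E(x))$ whenever $x$ and $d$ are distinct minimal elements of $P$ — the inductive hypothesis applied to $(P-d,D')$ gives $\nu^{P-d}(E(x))\le\nu^{(P-d)\setminus D'}(E(x))=\nu^{P\setminus D}(E(x))$, and chaining the two inequalities finishes the reduction.

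For the one-element case, $x$ and $d$ are distinct minimal elements of $P$; write $e(Q)$ for the number of linear extensions of a finite poset $Q$. Since $x$ is minimal, the linear extensions of $P$ starting with $x$ correspond bijectively to the linear extensions of $P-x$, so $\nu^P(E(x))=e(P-x)/e(P)$, and likewise $\nu^{P-d}(E(x))=e(P-x-d)/e(P-d)$; thus the assertion is equivalent to
$$
e(P-x)\,e(P-d)\ \le\ e(P)\,e(P-x-d).
$$
As $Z\setminus\{x\}$ and $Z\setminus\{d\}$ are up-sets of $P$ with union $Z$ and intersection $Z\setminus\{x,d\}$, this is the instance $S=Z\setminus\{x\}$, $T=Z\setminus\{d\}$ of the inequality $e(P_S)\,e(P_T)\le e(P_{S\cap T})\,e(P_{S\cup T})$ for up-sets $S,T$, i.e.\ log-supermodularity of $U\mapsto e(P_U)$ over the lattice of up-sets of $P$.

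I would prove the displayed inequality by a coupling: realise a uniformly random linear extension of a finite poset as the order induced by i.i.d.\ uniform $[0,1]$ labels conditioned on being consistent with the poset (legitimate since the order polytope is tiled by equal-volume simplices, one per linear extension). Under this description $\nu^P(E(x))$ is $\Prob[\,T_x \text{ is the smallest of all the labels}\mid C'\,]$ and $\nu^{P-d}(E(x))$ is $\Prob[\,T_x\text{ is the smallest of the labels of }Z\setminus\{d\}\mid C\,]$, where $C$ is consistency with $P-d$ and $C'$ adds only the constraints $T_d<T_w$ for $w$ above $d$; integrating out the independent label $T_d$ turns the inequality into an assertion of the form $\E[\bone_C]\,\E[\bone_{A\cap C}(m-T_x)]\le\E[\bone_C\,m]\,\E[\bone_{A\cap C}]$, where $A$ is the event that $T_x$ is minimal among the labels of $Z\setminus\{d\}$ and $m$ is the smallest label above $d$. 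The main obstacle is exactly this last estimate: the obvious coupling, ``take a uniform linear extension of $P$ and delete $d$'', does \emph{not} produce a uniform linear extension of $P-d$, because distinct extensions of $P-d$ admit distinct numbers of insertion points for $d$; one must control this re-weighting, i.e.\ establish the appropriate (negative) correlation between ``the amount of room available for $d$'' and ``$x$ receives the smallest label''. (Alternatively, the displayed inequality can be derived from a suitable form of the FKG / Ahlswede--Daykin inequality on the distributive lattice of order ideals of $P$.) Everything else — the reduction and the passage to the $e(\cdot)$-inequality — is routine.
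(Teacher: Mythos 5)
Your reduction to the case of a single minimal element $d$, and your reformulation of that case as $e(P-x)\,e(P-d)\le e(P)\,e(P-x-d)$, i.e.\ the up-set instance $U=Z\setminus\{x\}$, $V=Z\setminus\{d\}$ of $e(U)e(V)\le e(U\cup V)e(U\cap V)$, are both sound.  (One small slip: $D\setminus\{d\}$ need not be a down-set of $P$ -- take $d<a$ with both in $D$ -- and minimality of $d$ in $D$ is not contradicted by $d<a$; but $D\setminus\{d\}$ is trivially a down-set of $P-d$, which is all your induction needs.)  The problem is that at this point the proof stops.  The coupling argument is abandoned at what you yourself call ``the main obstacle'': deleting $d$ from a uniform linear extension of $P$ does not give a uniform linear extension of $P-d$, and the re-weighting is not controlled; the inequality $\E[\bone_C]\,\E[\bone_{A\cap C}(m-T_x)]\le\E[\bone_C\,m]\,\E[\bone_{A\cap C}]$ that you arrive at is the lemma in disguise, not a reduction of it.  The parenthetical suggestion that the displayed inequality ``can be derived from a suitable form of the FKG / Ahlswede--Daykin inequality on the distributive lattice of order ideals'' is not a proof either: that inequality is precisely (a special case of) Fishburn's correlation inequality, whose known proofs (Fishburn's original one, and Brightwell's simpler argument) are genuinely nontrivial, and the paper itself remarks that no completely elementary proof of the lemma is known.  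So the central step of your argument is a genuine gap.

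For comparison, the paper does not attempt to reprove the correlation inequality: it quotes Fishburn's theorem $e(U)e(V)\le e(U\cup V)e(U\cap V)$ for up-sets $U,V$ and applies it directly with $U=Z\setminus\{x\}$ and $V=Z\setminus D$, which yields the lemma for an arbitrary down-set $D$ in one step, so no induction on $|D|$ is needed.  If you are allowed to cite Fishburn's theorem (or Brightwell's proof of it), your argument becomes correct, though longer than necessary; if the correlation inequality is meant to be proved rather than quoted, the proposal as it stands does not prove the statement.
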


In other words, if $x$ is a minimal element of $P$, and the probability
that~$x$ is the bottom element of a uniformly random linear extension of
$P$ is $p$, then the probability that $x$ is the bottom element of a
uniformly random linear extension of $P\sm D$ is always at least~$p$, for
any down-set $D$ of $P$ not including $x$. Hopefully this seems
intuitively plausible, but, as is often the case with correlation
inequalities, no completely elementary proof is known.

Lemma~\ref{lem:corr} can be seen as a special case of the following
inequality, due to Fishburn~\cite{Fishburn}.

\begin{theorem} [(Fishburn)] \label{thm:fishburn}
Let $U$ and $V$ be up-sets in a finite poset $P=(Z,<)$, and, for
$Y\subseteq Z$, let $e(Y)$ denote the number of linear extensions of
$P_Y$. Then
\[
e(U) e(V) \le e(U \cup V) e(U \cap V).
\]
\end{theorem}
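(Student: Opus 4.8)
The plan is to reduce Theorem~\ref{thm:fishburn} to a one-element correlation statement by two telescoping arguments, leaving a single genuinely hard step.

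First I would normalise the set-up. Since $e(Y)$ depends only on the induced sub-poset $P_Y$, I may replace $P$ by $P_{U\cup V}$ and so assume $Z=U\cup V$. Put $A=Z\sm U$ and $B=Z\sm V$; these are disjoint down-sets of $P$, and any two disjoint down-sets are incomparable (if $a\in A$, $b\in B$ and $a<b$ then $a\in B$ because $B$ is a down-set, contradicting $A\cap B=\emptyset$, and symmetrically). It follows that every comparable pair of $P$ has both ends in $U$ or both ends in $V$, that $C:=U\cap V=Z\sm(A\cup B)$ is an up-set, and that the target becomes
\[
e(Z\sm A)\,e(Z\sm B)\le e(Z)\,e(Z\sm A\sm B).
\]

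Next I would telescope twice. Fix a linear extension $a_1\cdots a_p$ of $P_A$, so each $A_j=\{a_1,\dots,a_j\}$ is a down-set of $P$; writing $Z_j=Z\sm A_j$ and using that $A$ and $B$ are disjoint,
\[
\frac{e(Z\sm A)}{e(Z)}=\prod_{j=1}^{p}\frac{e(Z_j)}{e(Z_{j-1})},\qquad
\frac{e(Z\sm A\sm B)}{e(Z\sm B)}=\prod_{j=1}^{p}\frac{e(Z_j\sm B)}{e(Z_{j-1}\sm B)}.
\]
In the poset $P_{Z_{j-1}}$ the element $a_j$ is minimal and $B$ is a down-set not containing it, so it is enough to prove, for every finite poset $Q$, every minimal $x\in Q$, and every down-set $D\subseteq Q$ with $x\notin D$, that
\[
e(Q\sm x)\,e(Q\sm D)\le e(Q)\,e(Q\sm D\sm x);
\]
this is exactly the content of Lemma~\ref{lem:corr}. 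A second telescoping, now over a linear extension of $D$ (each removed element being minimal in the current poset, and distinct from — hence incomparable to — $x$), reduces this to the case $D=\{y\}$ with $y$ minimal and incomparable to $x$, namely
\[
e(Q\sm x)\,e(Q\sm y)\le e(Q)\,e(Q\sm x\sm y).
\]
Equivalently, if $b(D)$ denotes the number of maximal chains from a down-set $D$ to $Z$ in the distributive lattice of down-sets of $P$ (so that $b(Z\sm Y)=e(Y)$), the theorem asserts that $b$ is log-supermodular, and the reductions say it suffices to check this for down-sets differing in a single element.

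The hard part will be the surviving two-element inequality: it is a genuine correlation inequality of FKG type, for which — as already noted for Lemma~\ref{lem:corr} — no elementary proof is known, and I expect it to be the only real obstacle, since everything preceding it is routine telescoping together with the elementary structural facts about $A$, $B$ and $C$. My plan for it would be to derive it from the Ahlswede--Daykin inequality, the standard machinery for correlation inequalities about linear extensions: realise $e(Q\sm x)$, $e(Q\sm y)$ and the other terms as chain counts in the lattice of down-sets of $Q$ (or, alternatively, work with the uniform measure on maximal chains of that lattice and invoke the FKG inequality), and choose four non-negative functions on that lattice whose pointwise Ahlswede--Daykin hypothesis is either transparent or an instance of the same inequality on a strictly smaller poset, closing by induction on $|Q|$; arranging these hypotheses so that the argument is not circular is the technical heart of the matter. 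Of course, since Theorem~\ref{thm:fishburn} is due to Fishburn one may instead simply cite his proof; the value of the reduction above is that it isolates precisely what must be shown.
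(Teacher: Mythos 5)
The paper does not actually prove Theorem~\ref{thm:fishburn}: it is quoted as a known result, with a citation to~\cite{Fishburn} and the remark that a simpler proof appears in~\cite{Bri1}; the only derivation given in the text runs in the opposite direction to yours, obtaining Lemma~\ref{lem:corr} as the special case $U=Z\sm\{x\}$, $V=Z\sm D$. Your two telescoping reductions are correct: with $A=Z\sm U$, $B=Z\sm V$ disjoint down-sets, fixing a linear extension $a_1\cdots a_p$ of $P_A$ does make each $A_j$ a down-set of $P$, each $a_j$ is minimal in $P_{Z_{j-1}}$ with $B$ a down-set of that subposet avoiding $a_j$, and the term-by-term comparison of the two telescoping products is exactly Lemma~\ref{lem:corr}; the second telescoping, over a linear extension of $D$, likewise reduces that lemma to the case where $D$ is a single minimal element incomparable to $x$. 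So you have correctly shown that the full inequality is equivalent to its instance $e(Z\sm\{x\})\,e(Z\sm\{y\})\le e(Z)\,e(Z\sm\{x,y\})$ for distinct minimal $x,y$.

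The gap is that this surviving instance is never proved. It is precisely the singleton case of Lemma~\ref{lem:corr}, i.e.\ exactly the kind of correlation inequality about which the paper says that no completely elementary proof is known, so it cannot be treated as a routine afterthought: your Ahlswede--Daykin sketch explicitly defers the choice of the four functions and the non-circularity of the induction, which you yourself call the technical heart of the matter. As written, the proposal therefore establishes only that Fishburn's inequality for general up-sets follows from its two-element case; to finish you must either carry out the four-functions argument in full (essentially reconstructing the original proof) or fall back on citing~\cite{Fishburn} or~\cite{Bri1} --- which is all the paper itself does, and at that point your reduction, though valid and a pleasant observation, is no longer doing any work in the proof.
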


Indeed, setting $U = Z\sm\{x\}$ and $V = Z\sm D$, we have that
$\nu^P(E(x))=e(U)/e(U\cup V)$ and $\nu^{P\sm D}(E(x)) = e(U\cap V)/e(V)$;
Fishburn's inequality in this case is exactly Lemma~\ref{lem:corr}.

Theorem~\ref{thm:fishburn} was first proved by Fishburn
in~\cite{Fishburn}; Brightwell gave a simpler proof in~\cite{Bri1}. A
version of Lemma~\ref{lem:corr} is used as part of the proof of Lemma~3.5
in Brightwell, Felsner and Trotter~\cite{BFT}.

We make use of Lemma~\ref{lem:corr} in the proof of our next result, which
is the key to the proof of Theorem~\ref{thm:extremal}.

\begin{lemma} \label{lem:finite}
Let $P=(X,<)$ be a finite poset, and take $\delta> 0$ and $k \in\N$ such
that $k\delta\le1$. Suppose that $\cZ$ is a family of down-sets $Z$ in
$P$, each with $|Z|\le k$, such that $\varnothing\in\cZ$ and, whenever
$Z \in\cZ$, $|Z|\le k-1$, and $Z\cup\{x\} \notin\cZ$, we have
$\nu^{P\sm Z}(E(x)) \le\delta$.

Let $Y$ be the union of the sets in $\cZ$, and let $M$ be the set of
minimal elements of $P\sm Y$. Then
\[
\nu^P \bigl(\{\zeta\dvtx \Sigma_k(\zeta) \subseteq Y \cup M\}\bigr) \ge
\prod_{j=1}^k \bigl(1 - (j-1)\delta\bigr) \ge1 - \pmatrix{k\cr2} \delta.
\]
\end{lemma}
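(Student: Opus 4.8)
The plan is to prove the bound by induction on $k$. The cases $k=0$ and $k=1$ are immediate: in a uniformly random linear extension $\zeta$ of $P$, the bottom element $\zeta_1$ is always a minimal element of $P$, so if $\{\zeta_1\}\in\cZ$ then $\zeta_1\in Y$, while if $\{\zeta_1\}\notin\cZ$ then $\zeta_1$ lies outside $Y$ and, being minimal in $P$, is then minimal in $P\sm Y$, i.e. $\zeta_1\in M$; so $\Sigma_1(\zeta)\subseteq Y\cup M$ with probability $1$, matching the value $1-\binom{1}{2}\delta=1$ of the claimed bound.

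Before the inductive step I would replace $\cZ$ by a canonical form. Let $\cZ^{*}$ be the smallest family of down-sets of $P$, each of size at most $k$, that contains $\varnothing$ and is closed under the rule: if $Z\in\cZ^{*}$, $|Z|\le k-1$, $x$ is minimal in $P\sm Z$ and $\nu^{P\sm Z}(E(x))>\delta$, then $Z\cup\{x\}\in\cZ^{*}$. By construction $\cZ^{*}$ satisfies the hypotheses of the lemma, and a straightforward induction on the construction, invoking the hypothesis on $\cZ$, shows $\cZ^{*}\subseteq\cZ$, so $Y^{*}:=\bigcup\cZ^{*}\subseteq Y$; and if $M^{*}$ is the set of minimal elements of $P\sm Y^{*}$, then any element of $M^{*}$ lying outside $Y$ is in fact minimal in $P\sm Y$, so $M^{*}\subseteq Y\cup M$. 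Hence $Y^{*}\cup M^{*}\subseteq Y\cup M$, and it is enough to prove the bound for $\cZ^{*}$. From now on I assume $\cZ$ has this canonical form, so that $\{x\}\in\cZ$ exactly when $\nu^{P}(E(x))>\delta$, and every nonempty member of $\cZ$ is obtained from a smaller one by adjoining a minimal element of probability exceeding $\delta$.

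For the inductive step I would condition on the bottom element $\zeta_1=x$; then $\zeta_2\zeta_3\cdots$ is a uniformly random linear extension of $P\sm\{x\}$ and $\Sigma_k(\zeta)=\{x\}\cup\Sigma_{k-1}(\zeta_2\zeta_3\cdots)$. The heart of the argument is, for each $x$, to exhibit a family $\cZ_x$ of down-sets of $P\sm\{x\}$ that contains $\varnothing$, satisfies the hypotheses of the lemma for the triple $(P\sm\{x\},\delta,k-1)$, and has $x\in Y\cup M$ together with $Y_x\cup M_x\subseteq Y\cup M$, where $Y_x=\bigcup\cZ_x$ and $M_x$ is the set of minimal elements of $(P\sm\{x\})\sm Y_x$. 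Granting this for every $x$, the inductive hypothesis gives $\nu^{P\sm\{x\}}(\Sigma_{k-1}\subseteq Y_x\cup M_x)\ge\prod_{j=1}^{k-1}(1-(j-1)\delta)$, and hence $\nu^{P}(\Sigma_k\subseteq Y\cup M\mid\zeta_1=x)\ge\prod_{j=1}^{k-1}(1-(j-1)\delta)$; averaging over $x$ yields $\nu^{P}(\Sigma_k\subseteq Y\cup M)\ge\prod_{j=1}^{k-1}(1-(j-1)\delta)\ge\prod_{j=1}^{k}(1-(j-1)\delta)$, and the final inequality in the statement follows from the elementary bound $\prod_i(1-a_i)\ge 1-\sum_i a_i$ applied with $a_j=(j-1)\delta\in[0,1]$ (using $k\delta\le1$).

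When $\nu^{P}(E(x))>\delta$, so that $\{x\}\in\cZ$, the natural candidate $\cZ_x=\{Z'\subseteq X\sm\{x\}:Z'\cup\{x\}\in\cZ\}$ works: its members are down-sets of $P\sm\{x\}$ of size at most $k-1$ (removing the minimal element $x$ from a member of $\cZ$ containing it), it contains $\varnothing$, and if $Z'\in\cZ_x$ and $Z'\cup\{y\}\notin\cZ_x$ with $y$ minimal in $P\sm(Z'\cup\{x\})$, then $Z'\cup\{x\}\in\cZ$ has size at most $k-1$ and $(Z'\cup\{x\})\cup\{y\}\notin\cZ$, so the hypothesis on $\cZ$ gives exactly $\nu^{P\sm(Z'\cup\{x\})}(E(y))\le\delta$; moreover $Y_x\subseteq Y$, $x\in Y$, and one checks $M_x\subseteq Y\cup M$ as in the reduction step above. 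The genuinely delicate part — which I expect to be the main obstacle — is the ``boundary'' regime $\nu^{P}(E(x))\le\delta$, where $x$ need not belong to any member of $\cZ$. Passing from $P$ to $P\sm\{x\}$ can only increase the conditional probabilities $\nu^{P\sm Z}(E(\cdot))$ (this is precisely Lemma~\ref{lem:corr}), so the escape condition does not transfer verbatim; and when $x\in M$, deleting $x$ creates new minimal elements sitting directly above $x$, which lie outside $Y\cup M$. To control the latter I would use the fact that deleting a minimal element of a finite poset induces a measure-preserving map on linear extensions, which gives, for any maximal element $w$ of a down-set $S$ of $P$, $\sum_{z}\nu^{P\sm S}(E(z))\le\nu^{P\sm(S\sm\{w\})}(E(w))$, the sum being over the covers $z$ of $w$ that are minimal in $P\sm S$; combined with the escape condition (which bounds the right-hand side by $\delta$ once $S\sm\{w\}$ lies in $\cZ$), this lets one absorb each ``bad'' cover of a previously generated element into a single factor of $\delta$. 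Organising these estimates so that a stage at which $j-1$ elements have already been generated contributes a loss of at most $(j-1)\delta$ — one unit of $\delta$ for each element generated so far — is what produces the product $\prod_{j=1}^{k}(1-(j-1)\delta)$, and carrying this bookkeeping through the restriction to $P\sm\{x\}$ in the boundary case is the step I would expect to require the most care.
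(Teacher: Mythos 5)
Your reduction to a canonical $\cZ$ and the easy case $\{x\}\in\cZ$ are fine, but the inductive framework you set up cannot be completed, and the part you defer is precisely where it breaks. If, as in your third paragraph, you could produce for \emph{every} bottom element $x$ a family $\cZ_x$ satisfying the hypotheses for $(P\sm\{x\},\delta,k-1)$ with $Y_x\cup M_x\subseteq Y\cup M$, then averaging over $x$ would give $\nu^P(\Sigma_k(\zeta)\subseteq Y\cup M)\ge\prod_{j=1}^{k-1}(1-(j-1)\delta)$, a bound losing only $k-1$ factors. That bound is false in general: in the tightness example mentioned after the lemma ($P$ a disjoint union of $m=1/\delta$ long chains, $\cZ=\{\varnothing\}$, so $Y=\varnothing$ and $M$ is the set of chain bottoms), already for $k=2$ the probability that the bottom two elements lie in $Y\cup M$ tends to $1-\delta<1$. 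Concretely, the required $\cZ_x$ cannot exist in your ``boundary'' case $x\in M$: the element $z$ covering $x$ in its chain becomes minimal in $P\sm\{x\}$, so $z\in Y_x\cup M_x$ no matter how $\cZ_x$ is chosen, yet $z\notin Y\cup M$. So the ``heart of the argument'' as you formulate it is not merely delicate but impossible, and the sketch in your final paragraph (absorbing bad covers into factors of $\delta$, with a loss of $(j-1)\delta$ at stage $j$) is exactly the content that remains unproved; as written it is a plan, not a proof.

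The paper's proof avoids deleting the bottom element and inducting on $k$. It calls a down-set \emph{low} if it has the form $D=Z\cup W$ with $Z\in\cZ$ maximal inside $D$ and $W$ a set of minimal elements of $P\sm Z$ (such $D$ are automatically contained in $Y\cup M$), and bounds $\nu^P(\Sigma_j \mbox{ low}\mid\Sigma_{j-1}=D)$ directly: with $N$ the set of minimal elements of $P\sm Z$, the probabilities $p_x=\nu^{P\sm Z}(E(x))$ sum to $1$ over $N$ and satisfy $p_x\le\delta$ for $x\in W$ (by maximality of $Z$), while Lemma~\ref{lem:corr} gives $\nu^{P\sm D}(E(x))\ge p_x$ for $x\in N\sm W$; hence the conditional probability of staying low is at least $1-|W|\delta\ge 1-(j-1)\delta$, and multiplying over $j$ gives the product. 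Note that the correlation inequality is used in the only direction available: instead of trying to keep escape probabilities below $\delta$ after further deletions (which, as you correctly observed, Lemma~\ref{lem:corr} does not provide), one lower-bounds the total mass of the \emph{good} elements $N\sm W$ in $P\sm D$ by their mass in $P\sm Z$. Carrying the pair $(Z,W)$ --- i.e.\ the set of already-generated elements lying outside the chosen member of $\cZ$ --- through the recursion is the bookkeeping your induction lacks; strengthening your inductive statement to include that data essentially reproduces the paper's argument.
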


The idea is that $Y$ contains all elements of the poset that are
``likely'' to appear among the first $k$ elements, even conditioned on
other ``likely'' events. The conclusion states that, with high
probability, all of the first $k$ elements are either in $Y$ or are
minimal in $P \sm Y$---in other words, the first~$k$ elements do not
contain a pair of comparable elements that are not in $Y$.

Lemma~\ref{lem:finite} is asymptotically best possible, at least in the
case where $1/\delta= m \in\N$. To see this, let $P$ be the disjoint
union of $m$ chains, each of length $t$, with $\cZ= \{\varnothing\}$, so
$Y = \varnothing$ and $M$ consists of the bottom elements of the chains.
For $k\le m$, the probability that, in a uniformly random linear
extension of $P$, the bottom $k$ elements are all in $M$---that is,
all in  different chains---is asymptotically equal to the product above as $t$
tends to infinity.

\begin{pf*}{Proof of Lemma \ref{lem:finite}}
We call a down-set $D$ of $P$ \textit{low} if it is the union of a set
$Z \in\cZ$ and a set $W$ of minimal elements of $P\sm Z$. Note that each
low down-set is a subset of $Y\cup M$. If $D$ is a low down-set, we may
and shall take $Z$ to be a maximal element of $\cZ$ with $Z \subseteq D$,
and $W=D\sm Z$, so that $Z \cup\{w\} \notin\cZ$ for each $w \in W$.

Let $D= Z \cup W$ be a low down-set as above, with $|Z| \le k-1$. This
implies that $\nu^{P\sm Z}(E(x)) \le\delta$ for $x \in W$. Let $N =N(Z)$
denote the set of minimal elements of $P\sm Z$, so $W \subseteq N$, and
note that each set $D\cup\{x\}$, for $x \in N\sm W$, is a low down-set.
We claim that $\nu^{P\sm D}(\bigcup_{x \in N\sm W} E(x))$---the
probability that, in a uniformly random linear extension of $P\sm D$, the
bottom element $x$ is in $N\sm W$---is at least $1 - |W| \delta$.

We start by considering the probability that each element is bottom in
a~uniformly random linear extension of the larger poset $P\sm Z$. For
$x \in N$, set $p_x = \nu^{P\sm Z}(E(x))$.
Note that $\sum_{x\in N} p_x =1$, and also that
$p_x = \nu^{P\sm Z}(E(x)) \le\delta$ for each $x\in W$.

We consider now the poset $P\sm D = (P\sm Z)\sm W$, and the various
probabilities that an element is bottom in a uniformly random linear
extension of this poset. For $x \in N\sm W$, we set
$q_x= \nu^{P\sm D}(E(x))$; by Lemma~\ref{lem:corr}, we have $q_x \ge p_x$
for all $x \in N\sm W$. Thus,
\[
\nu^{P\sm D}\Biggl(\bigcup_{x \in N\sm W}E(x)\Biggr) =
\sum_{x \in N\sm W} q_x \ge\sum_{x\in N\sm W} p_x =
1 - \sum_{x\in W} p_x \ge1- |W| \delta
\]
as claimed.\vadjust{\eject}

To complete the proof, observe that, for $1\le j \le k$,
\[
\nu^P(\Sigma_j \mbox{ is low } \mid\Sigma_{j-1} \mbox{ is low})
\]
is a convex combination of terms of the form
$\nu^P(\Sigma_j \mbox{ is low } \mid\Sigma_{j-1} = D = Z \cup W)$,
where $D=Z\cup W$ is a low down-set of size $j-1$. As all the down-sets
$D\cup\{x\}$, for $x \in N(Z) \sm W$, are low, we have
\begin{eqnarray*}
\nu^P(\Sigma_j \mbox{ is low } \mid\Sigma_{j-1} = D = Z \cup W)
&\ge&
\nu^{P\sm D}\Biggl(\bigcup_{x\in N(Z)\sm W} E(x)\Biggr) \\
&\ge&1 - |W|\delta\ge
1-(j-1)\delta.
\end{eqnarray*}
Therefore,
\[
\nu^P(\Sigma_j \mbox{ is low } \mid\Sigma_{j-1} \mbox{ is low})
\ge1 -(j-1)\delta
\]
for each $j$. Multiplying terms, we see that
\[
\nu^P(\Sigma_k \mbox{ is low}) \ge\prod_{j=1}^k \bigl( 1-(j-1)\delta\bigr).
\]
The result follows.
\end{pf*}

Next, we state a result of Stanley~\cite{Stanley}. For an element $x$
in a
finite poset $P=(Z,<)$, let $r_i(x) = \nu^P(\{\zeta\dvtx \zeta_i = x\}
)$, the
probability that, in a uniformly random linear extension $\zeta$ of $P$,
$x$ appears in position~$i$.

\begin{theorem} [(Stanley)] \label{thm:stanley}
For any element $x$ in an $n$-element poset $P=(Z,<)$, the sequence
$(r_i(x))_{i=1}^n$ is log-concave.
\end{theorem}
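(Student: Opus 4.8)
The plan is to follow Stanley's argument from \cite{Stanley}, realising the counts $e_i(x) := e(P)\,r_i(x)$ (the number of linear extensions of $P=(Z,<)$ placing $x$ in position $i$) as mixed volumes of two fixed convex polytopes, and then invoking the Aleksandrov--Fenchel inequality; alternatively one could simply cite \cite{Stanley}. Write $n=|Z|$, $D=D(x)$, $U=U(x)$, $R=I(x)$ and $m=|R|$, so that $D\cup R\cup U = Z\sm\{x\}$, with $D$ a down-set and $U$ an up-set of $P$. The first, elementary, step is the identity
$$
e_i(x)\;=\;\sum_{S}\;e(P_S)\,e\bigl(P_{Z\sm S\sm\{x\}}\bigr),
$$
the sum taken over down-sets $S$ of $P$ with $D\subseteq S\subseteq Z\sm U\sm\{x\}$ and $|S|=i-1$, where $e(\cdot)$ counts linear extensions: if $x$ sits in position $i$ then its set of predecessors is such a down-set of size $i-1$, and, conversely, for each such $S$ the linear extensions with predecessor set $S$ are exactly the concatenations of a linear extension of $P_S$, then $x$, then a linear extension of $P_{Z\sm S\sm\{x\}}$ (these hypotheses on $S$ being precisely what makes such a concatenation a linear extension of $P$). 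Since log-concavity is the assertion $e_{i-1}(x)\,e_{i+1}(x)\le e_i(x)^2$ for all $i$, and this is trivial when $i\in\{1,n\}$ (one of $e_0(x)$, $e_{n+1}(x)$ occurs and vanishes), it remains to treat $2\le i\le n-1$.

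The geometric heart of the argument is the claim that, working in $\R^{Z\sm\{x\}}\cong\R^{n-1}$ and writing $\mathcal{O}(Q)$ for the order polytope of a finite poset $Q$,
$$
K=\mathcal{O}(P_{D\cup R})\times\{0\}^{U},\qquad L=\{0\}^{D}\times\mathcal{O}(P_{U\cup R}),
$$
one has, with the usual normalisation $V(A,\dots,A)=\mathrm{vol}(A)$ of the mixed volume,
$$
e_i(x)\;=\;(n-1)!\;V\bigl(\underbrace{K,\dots,K}_{\,i-1\,},\underbrace{L,\dots,L}_{\,n-i\,}\bigr).
$$
To prove this I would treat the value at $x$ of a point of $\mathcal{O}(P)$ as a free parameter $t\in[0,1]$: the $t$-slice of $\mathcal{O}(P)$ breaks up according to which down-set $S=D\cup S'$ is the set of coordinates lying below $t$, and the piece corresponding to $S$, after rescaling its $S$-coordinates by $1/t$ and the complementary ones by $1/(1-t)$, becomes the product $\mathcal{O}(P_S)\times\mathcal{O}(P_{Z\sm S\sm\{x\}})$; hence its $(n-1)$-dimensional volume is $t^{|S|}(1-t)^{n-1-|S|}\,e(P_S)\,e(P_{Z\sm S\sm\{x\}})/(|S|!\,(n-1-|S|)!)$. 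Summing over $S$ and integrating over $t$ with $\int_0^1 t^a(1-t)^b\,dt=a!\,b!/(a+b+1)!$ reproduces $e_i(x)/n!$; on the other hand the same polynomial in $t$ is exactly $\mathrm{vol}(tK+(1-t)L)$, whose coefficient of $t^{i-1}(1-t)^{n-i}$ is $\binom{n-1}{i-1}V(K^{[i-1]},L^{[n-i]})$, and comparing coefficients gives the displayed formula.

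With this in hand, the Aleksandrov--Fenchel inequality $V(A,A,\mathcal{C})\,V(B,B,\mathcal{C})\le V(A,B,\mathcal{C})^2$ applied with $A=K$, $B=L$ and $\mathcal{C}$ the list of $i-2$ copies of $K$ together with $n-1-i$ copies of $L$ (a legitimate list of $n-3$ bodies precisely when $2\le i\le n-1$) yields $V(K^{[i]},L^{[n-1-i]})\,V(K^{[i-2]},L^{[n-i+1]})\le V(K^{[i-1]},L^{[n-i]})^2$, which, after multiplying through by $((n-1)!)^2$, is exactly $e_{i+1}(x)\,e_{i-1}(x)\le e_i(x)^2$. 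I expect the main obstacle to be the middle step --- checking carefully that the combinatorial quantities really are these mixed volumes, that is, matching the decomposition of the $t$-slice of $\mathcal{O}(P)$ term-by-term with the multilinear expansion of $\mathrm{vol}(tK+(1-t)L)$. Beyond that, the proof rests on the Aleksandrov--Fenchel inequality itself, for which, just as with the correlation inequality of Lemma~\ref{lem:corr}, no elementary proof is known.
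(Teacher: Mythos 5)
The paper offers no proof of this statement --- it is quoted from Stanley's 1981 paper --- and your proposal is a correct reconstruction of precisely that argument: the slice of the order polytope of $P$ at height $t$ in the $x$-coordinate is (after translating the $U(x)$-coordinates by $t$, which affects no volumes) exactly $tK+(1-t)L$, your down-set decomposition of that slice matches term-by-term the multilinear expansion of $\mathrm{vol}\bigl(tK+(1-t)L\bigr)$ in the basis $t^{j}(1-t)^{\,n-1-j}$ and so yields $e_i(x)=(n-1)!\,V\bigl(K^{[i-1]},L^{[n-i]}\bigr)$, and the Aleksandrov--Fenchel step is applied with the correct list of reference bodies for $2\le i\le n-1$. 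Since this is essentially Stanley's own proof, which is what the paper's citation points to, there is nothing further to compare.
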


There are many equivalent ways of expressing the property of
log-concavity. One is that the sequence of ratios $r_{i+1}(x)/r_i(x)$ is
nonincreasing over the range of $i$ for which $r_i(x) > 0$. This implies
that, for $j\le j+m \le j+s$, we have
\[
 \biggl(\frac{r_{j+s}(x)}{r_j(x)} \biggr)^{1/s} \!=\!
 \Biggl( \prod_{i=1}^s \frac{r_{j+i}(x)}{r_{j+i-1}(x)}  \Biggr)^{1/s}
\!\le\!
 \Biggl( \prod_{i=1}^m \frac{r_{j+i}(x)}{r_{j+i-1}(x)}  \Biggr)^{1/m}
\!=\!  \biggl(\frac{r_{j+m}(x)}{r_j(x)} \biggr)^{1/m}.
\]
This is the inequality we use to prove the following lemma.

\begin{lemma} \label{lem:q}
Fix $0<\varepsilon<1$, $0<\delta<1$ and $k\in\N$. Suppose $P=(Z,<)$ is a finite
poset. Let $L$ denote the set of elements $x$ of $P$ such that
$\nu^P(\{\zeta\dvtx x \in\Sigma_k(\zeta)\}) \ge\delta^{k+1}$.

Set $q=q(k,\delta,\eps)= 10k\delta^{-(k+1)}\log(5k/\eps\delta^{k+1})$.
Then
\[
\nu^P\bigl(\{ \zeta\dvtx L \subseteq\Sigma_q(\zeta)\}\bigr) > 1-\eps/8.
\]
\end{lemma}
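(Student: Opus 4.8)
The plan is to bound the failure event by a union bound over the elements of $L$, and for each such element to control its position via Stanley's log-concavity theorem (Theorem~\ref{thm:stanley}). First, $L$ cannot be large: since $|\Sigma_k(\zeta)|=k$ for every linear extension $\zeta$ of $P$, summing over ground elements gives $\sum_x\nu^P(\{\zeta:x\in\Sigma_k(\zeta)\})=k$, and as each $x\in L$ contributes at least $\delta^{k+1}$ to this sum, $|L|\le k\delta^{-(k+1)}$. It therefore suffices to prove that $\nu^P(\{\zeta:x\notin\Sigma_q(\zeta)\})<\eps\delta^{k+1}/(8k)$ for each fixed $x\in L$; the union bound then gives $\nu^P(\{\zeta:L\not\subseteq\Sigma_q(\zeta)\})<|L|\cdot\eps\delta^{k+1}/(8k)\le\eps/8$.

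Fix $x\in L$ and write $r_i=r_i(x)$. Since $\nu^P(\{\zeta:x\in\Sigma_k(\zeta)\})=\sum_{i=1}^k r_i\ge\delta^{k+1}$, there is an index $j\le k$ with $r_j\ge\delta^{k+1}/k$; put $\alpha=\delta^{k+1}/k$ and $m=\lceil 2/\alpha\rceil$. The key claim is that $r_{j+m}\le r_j/2$. Indeed, if $r_{j+m}>r_j/2$, then $j$ and $j+m$ both lie in the support of $(r_i)$, and since $(\log r_i)$ is concave on its support (Theorem~\ref{thm:stanley}) we obtain $r_i\ge\min(r_j,r_{j+m})>\alpha/2$ for every $i$ with $j\le i\le j+m$, forcing $1\ge\sum_{i=j}^{j+m}r_i>(m+1)\alpha/2\ge 1$, a contradiction. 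Now the displayed log-concavity inequality preceding the lemma, applied with this $m$ and any $s\ge m$, yields $(r_{j+s}/r_j)^{1/s}\le(r_{j+m}/r_j)^{1/m}\le 2^{-1/m}$, so that $r_i\le r_j\,2^{-(i-j)/m}\le 2^{-(i-j)/m}$ for all $i\ge j+m$.

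It remains to sum the geometric tail and insert the value of $q$. Since $j+m\le k+2k\delta^{-(k+1)}+1$ is much smaller than $q$, every index $i>q$ satisfies $i\ge j+m$, so
$$
\nu^P(\{\zeta:x\notin\Sigma_q(\zeta)\})=\sum_{i>q}r_i\le\sum_{i>q}2^{-(i-j)/m}=\frac{2^{-(q+1-j)/m}}{1-2^{-1/m}}<3m\cdot 2^{-(q+1-j)/m}.
$$
Using $m\le 3k\delta^{-(k+1)}$ and $j\le k$, a direct computation with $q=10k\delta^{-(k+1)}\log(5k/\eps\delta^{k+1})$ shows the right-hand side is less than $\eps\delta^{k+1}/(8k)$ — the factor $10$ and the logarithmic factor in $q$ leave comfortable room — while the few degenerate cases (for instance $|Z|<q$, where $\Sigma_q$ is all of $Z$) are trivial. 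Combined with the union bound of the first paragraph, this proves the lemma.

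The step I expect to be the main obstacle is the key claim of the second paragraph: one must extract genuine geometric decay of the position probabilities $r_i(x)$ from log-concavity together with the rather weak input $\sum_{i\le k}r_i(x)\ge\delta^{k+1}$. The crucial point is that a log-concave probability vector with an entry of size at least $\alpha$ cannot maintain entries of size at least $\alpha/2$ over more than $O(1/\alpha)$ consecutive indices, and hence must begin to decay geometrically — at rate of order $\alpha$ — within $O(1/\alpha)$ further positions; once this is secured, the remainder is routine summation and the bookkeeping of constants in the definition of $q$.
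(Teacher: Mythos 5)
Your proposal is correct and follows essentially the same route as the paper: bound $|L|\le k\delta^{-(k+1)}$, locate an index $j\le k$ with $r_j(x)\ge\delta^{k+1}/k$, force a halving $r_{j+m}\le r_j/2$ within $m=O(k\delta^{-(k+1)})$ steps, invoke Stanley's log-concavity to turn this into geometric decay of the tail, and finish with a geometric sum plus a union bound over $L$. The only (immaterial) difference is that the paper obtains the small term $r_{j+m}$ by pigeonhole on $\sum_i r_i\le 1$ rather than via the endpoint-minimum property of log-concave sequences, and its constant bookkeeping in the choice of $q$ is at the same level of looseness as yours.
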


Loosely: if $L$ is the set of elements that have a significant probability
of appearing within the bottom $k$ positions in a uniformly random linear
extension of a finite poset $P$, then, for sufficiently large $q$, it
is very
likely that all the elements of $L$ appear within the bottom $q$ positions.

\begin{pf*}{Proof of Lemma \ref{lem:q}}
Set $\eta= \delta^{k+1}$ for convenience. Note that the number $\ell
$ of
elements of $L$ is at most $k\eta^{-1}$. Now fix any element $x$ of
$L$, set
$r_i=r_i(x)$, for each $i$, and consider the sequence $(r_i)$.

By assumption, $\sum_{i=1}^k r_i \ge\eta$. So one of $r_1,\ldots,r_k$,
say $r_j$, is at least $\eta/k$. Also, as the $r_i$ sum to~1, one of the
next $\lceil2k/\eta\rceil$ terms
$r_{j+1},\ldots,r_{j+\lceil2k/\eta\rceil}$, say $r_{j+m}$, is at most
$\eta/2k$. By Theorem~\ref{thm:stanley}, the sequence $(r_i)$ is
log-concave. So, for $s \ge m$,
\[
 \biggl( \frac{r_{j+s}}{r_j} \biggr)^{1/s} \le
 \biggl(\frac{r_{j+m}}{r_j} \biggr)^{1/m} \le2^{-1/m} \le
2^{-\eta/3k}.
\]
Therefore, for $t\ge m$,
\[
\sum_{s=t}^\infty r_{j+s} \le\sum_{s=t}^\infty2^{-s\eta/3k} =
\frac{2^{-t\eta/3k}}{1-2^{-\eta/3k}} \le\frac{3k}{\eta}
2^{-t\eta/3k}.
\]

This implies that, for $t \ge2k/\eta$, the probability that a particular
element of $L$ is not among the bottom $k+t$ elements
$\xi_1, \ldots, \xi_{k+t}$ is at most $3k\eta^{-1}2^{-t\eta/3k}$. The
probability that some element of $L$ is not among the bottom $k+t$\vspace*{1pt} is thus
at most $3k^2\eta^{-2}2^{-t\eta/3k}$. Provided
$t \ge3k\eta^{-1}\log_2(24k^2/\eps\eta^2)$, this probability is at most
$\eps/8$. We set $t=q-k$, where $q$
is as in the statement of the lemma.  Noting that $t$ is large enough, we
are done.
\end{pf*}

Next, we establish some properties of an extremal order-invariant
measure~$\mu$. In what follows, we make heavy use of
Theorem~\ref{thm:equivalence}, which tells us that~$\mu$ is
essential, and
that~$\mu$ has trivial tails. We shall also use Theorem~\ref{thm:weak},
which tells us that the sequence $\nu^k(\cdot)(\omega)$ a.s.\ converges
weakly to $\mu$.

For $z \in[0,1]$ and $k \in\N$, we define the event
$G(z,k) = \{\omega' \in\Omega\dvtx z \in\Xi_k(\omega')\}$: this means that
$z$ is one of the first $k$ elements generated. For a fixed
$\omega=(x_1x_2\cdots, <^\N) \in\Omega$, observe that
$\nu^n(G(z,k))(\omega)$ is equal to
$\nu^{P_n}(\{ \zeta\dvtx z \in\Sigma_k(\zeta)\})$, the probability
that $z$
appears among the bottom $k$ elements in a uniformly random linear
extension $\zeta$ of the finite poset $P_n = \Pi_n(\omega)$. Indeed, for
any event $G \in\cF_n$, we have these two different interpretations of
$\nu^n(G)(\omega)$, and it is usually convenient to work with the latter.

\begin{lemma} \label{lem:gzk}
Let $\mu$ be an extremal order-invariant measure. For $\mu$-almost every
$\omega\in\Omega$, $\nu^n(G(z,k))(\omega) \to\mu(G(z,k))$ for every
$z \in[0,1]$ and $k\in\N$.
\end{lemma}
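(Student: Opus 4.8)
The proof rests on two consequences of extremality. By Theorem~\ref{thm:equivalence}, $\mu$ is \emph{essential}, so for each \emph{fixed} event $E\in\cF$ we have $\nu^n(E)(\omega)\to\mu(E)$ for $\mu$-almost every $\omega$; and, as noted after Theorem~\ref{thm:weak}, extremality of $\mu$ also gives $\nu^n(\cdot)(\omega)\Rightarrow\mu$ for $\mu$-almost every $\omega$. The crucial advantage of the second statement over the first is that a single weak-convergence statement $\nu^n(\cdot)(\omega)\Rightarrow\mu$ already encodes $\limsup_n\nu^n(F)(\omega)\le\mu(F)$ \emph{for every} closed set $F$, on one fixed set of $\omega$. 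Now, since each $\xi_i$ is a (continuous) coordinate projection and $\{z\}$ is closed in $[0,1]$, the event $G(z,k)=\bigcup_{i=1}^k\{\omega':\xi_i(\omega')=z\}$ is a finite union of closed sets, hence itself closed in the product topology on $\Omega$.

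I would next observe that, for each $k$, only countably many $z$ have $\mu(G(z,k))>0$. Indeed, for any finite $S\subseteq[0,1]$,
$$
\sum_{z\in S}\mu(G(z,k))=\E_\mu\,|\Xi_k\cap S|\le\E_\mu\,|\Xi_k|\le k,
$$
so $\sum_{z\in[0,1]}\mu(G(z,k))\le k<\infty$; set $D_k=\{z:\mu(G(z,k))>0\}$, a countable set, and $D=\bigcup_{k\in\N}D_k$, also countable. Let $\Omega_1$ be the $\mu$-full-measure set of $\omega$ for which $\nu^n(\cdot)(\omega)\Rightarrow\mu$. If $z\notin D$, then $\mu(G(z,k))=0$ for every $k$, so for every $\omega\in\Omega_1$ and every $k$ we have $\limsup_n\nu^n(G(z,k))(\omega)\le\mu(G(z,k))=0$, and hence $\nu^n(G(z,k))(\omega)\to\mu(G(z,k))$. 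Crucially, this holds simultaneously for all $z\notin D$ and all $k$ on the \emph{one} set $\Omega_1$.

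It remains to handle $z\in D$. There are only countably many pairs $(z,k)$ with $z\in D$ and $k\in\N$, and for each of them essentiality of $\mu$, applied to the fixed event $G(z,k)$, provides a $\mu$-full-measure set of $\omega$ on which $\nu^n(G(z,k))(\omega)\to\mu(G(z,k))$; intersecting these countably many sets gives a $\mu$-full-measure set $\Omega_2$ on which $\nu^n(G(z,k))(\omega)\to\mu(G(z,k))$ for every $z\in D$ and every $k$. Then $\Omega_1\cap\Omega_2$ has full measure and the asserted convergence holds on it for every $z\in[0,1]$ and every $k\in\N$. There is no real obstacle in this argument; the only thing requiring care is the split between the countably many ``heavy'' values $z\in D$, treated event-by-event via essentiality, and the uncountably many ``light'' values $z\notin D$, all of which are disposed of uniformly by the portmanteau half of weak convergence — which is exactly why Theorem~\ref{thm:weak}, and not essentiality alone, is invoked.
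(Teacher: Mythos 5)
Your proof is correct and follows essentially the same route as the paper: both arguments split the pairs $(z,k)$ into the countably many with $\mu(G(z,k))>0$, handled event-by-event via essentiality, and the remainder, handled simultaneously on the single full-measure set where $\nu^n(\cdot)(\omega)\Rightarrow\mu$, using that $G(z,k)$ is closed. The only cosmetic difference is that you invoke the closed-set portmanteau inequality $\limsup_n\nu^n(F)(\omega)\le\mu(F)$, whereas the paper notes that $G(z,k)$ is closed with empty interior and uses the $\mu$-continuity-set criterion; both facts are quoted in Section~\ref{sec:invariant} and the two arguments are interchangeable.
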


\begin{pf}
$\!\!$Theorem~\ref{thm:weak} tells us that, $\mu$-a.s., $\nu^n(\cdot
)(\omega)$
converges weakly to~$\mu$. For an $\omega$ such that weak convergence
holds, this implies that $\nu^n(E)(\omega) \to\mu(E)$ for all events
$E\in\cF$ such that $\mu(\partial E) = 0$, where $\partial E$
denotes the
boundary of $E$.

Each of the events $G(z,k)$ is a union of finitely many sets of the form
$\{ \omega\in\Omega\dvtx \xi_i(\omega) = z\}$, each of which is a
closed set
with empty interior. Thus, $G(z,k)$ is itself a closed set with empty
interior, so the boundary $\partial G(z,k)$ is the event $G(z,k)$ itself.

Next, we note that, for each $k\in\N$ and $m\in\N$, there are at most
$km$ elements $z\in[0,1]$ such that $\mu(G(z,k))\ge1/m$. Therefore,
the set $C$ of pairs $(z,k)$ such that $\mu(G(z,k)) > 0$ is countable.

As $\mu$ is essential, we have, $\mu$-a.s., that
$\nu^n(G(z,k))(\omega) \to\mu(G(z,k))$ for all $(z,k)$ in the countable
set $C$.

Let
\[
\Omega_0 = \{ \omega\in\Omega\dvtx \nu^n(\cdot)(\omega) \Rightarrow
\mu(\cdot) \mbox{ and } \nu^n(G(z,k))(\omega) \to\mu(G(z,k))\
\forall
(z,k) \in C \}.
\]
We have that $\mu(\Omega_0) = 1$.

Now fix $\omega\in\Omega_0$. For $(z,k) \in C$, we know that
$\nu^n(G(z,k))(\omega) \to\mu(G(z,k))$. On the other hand, for every
$(z,k) \notin C$, we have that $\mu(\partial G(z,k)) = \break\mu(G(z,k)) = 0$;
as $\nu^n(\cdot)(\omega)$ converges weakly to $\mu$, this implies
that\break
$\nu^n(G(z,k))(\omega) \to\mu(G(z,k)) = 0$ for all $(z,k) \notin C$.
Hence, $\nu^n(G(z,k))(\omega) \to\mu(G(z,k))$ for all pairs $(z,k)$,
as required.
\end{pf}

Let $\mu$ be an extremal order-invariant measure. For each $z \in[0,1]$,
the event $G(z)= \{ \omega\dvtx z \in\Xi(\omega)\}$---the event that $z$
is generated at all---is a tail event, so $\mu(G(z))$ is either~0 or~1.
Set
\[
V = V(\mu) = \{z \in[0,1]\dvtx \mu(G(z)) = 1\}.
\]
Referring to the statement of Theorem~\ref{thm:extremal}, one of our goals
is to identify $V$ with $Q\sm M$.

We say that the element $z \in[0,1]$ is \textit{persistent} for
$\omega\in\Omega$ if, for some $k \in\N$,
\[
\liminf_{n\to\infty} \nu^n(G(z,k))(\omega) > 0.
\]
Lemma~\ref{lem:gzk} tells us that, a.s., all the limits
$\lim_{n\to\infty} \nu^n(G(z,k))(\omega)$ exist, and are equal to the
corresponding $\mu(G(z,k))$. Thus, a.s., the elements that are persistent
for $\omega$ are exactly those with $\mu(G(z,k)) > 0$ for some $k$,
which in turn are exactly those in $V$.

Notice that, for $\omega= (x_1x_2\cdots, <^{\N})$, only elements
appearing in the string $x_1x_2 \cdots$ can be persistent for $\omega$.
However, elements that do appear in the string need not be persistent.
Consider, for instance, any element\vspace*{1pt}
$\omega=\break (x_1x_2\cdots, <^\N) \in\Omega$, where $<^\N$ is an antichain.
Here, we have $\nu^n(G(x_1,k))(\omega) = k/n$ whenever $n \ge k$, so $x_1$
is not persistent for $\omega$, and indeed no element is persistent for
such an $\omega$. In the setting of Example~\ref{ex3}, where the generated
partial order is a.s.\ an antichain, this means that there are a.s.\ no
persistent elements.

We know that, for $\mu$-almost every $\omega$, for each element
$z \notin V$, and for each $k \in\N$, $\nu^n(G(z,k))(\omega) \to
0$. For
fixed $k$, we now want to establish the existence of a suitably large
$n_0$ so that, for all $\omega$ in some set with high $\mu$-probability,
all of the $\nu^{n_0}(G(z,k))(\omega)$, for $z\notin V$, are small.
Although the previous results do not give us any form of \textit{uniform}
convergence of the sequences $\nu^n(G(z,k))(\omega)$ for all $z
\notin V$,
the following result---covering only the elements $z\notin V$ that
appear in the set $\Xi_q(\omega)$ of the first $q$ elements generated---will be sufficient for our purposes.

\begin{lemma} \label{lem:quant}
Let $\mu$ be an extremal order-invariant measure. Fix $\eps> 0$,
$\delta>0$ and $k \in\N$, and let $q$ be as in Lemma~\ref{lem:q}. Then
there exists $n_0 \in\N$ such that, for all $n \ge n_0$,
\[
\mu\bigl( \{ \omega\dvtx \mbox{for all elements $z$ of $\Xi_q(\omega)\sm V$,
$\nu^n(G(z,k))(\omega) < \delta^{k+1}$} \}\bigr) > 1 -\eps/8.
\]
\end{lemma}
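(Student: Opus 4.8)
The plan is to recast the finitely many \emph{random} elements $z\in\Xi_q(\omega)\sm V$ in terms of the fixed coordinate functions $\xi_1,\dots,\xi_q$, and then to combine Lemma~\ref{lem:gzk} with a bounded-convergence argument; only the fact that $q$ is a fixed positive integer will matter. First I would record that $V$ is countable: $G(z)$ is a tail event, so $\mu(G(z))\in\{0,1\}$, and since $G(z)=\bigcup_k G(z,k)$ is an increasing union, $z\in V$ forces $\mu(G(z,k))\ge\tfrac12$ for some $k$; as for each fixed $k$ at most $2k$ elements $z$ satisfy $\mu(G(z,k))\ge\tfrac12$ (recorded in the proof of Lemma~\ref{lem:gzk}), $V$ lies in a countable union of finite sets, hence is Borel, and each set $\{\omega:\xi_i(\omega)\notin V\}$ lies in $\cF$. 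Next, for $n\ge\max(q,k)$ and $1\le i\le q$, I would introduce $g_{n,i}(\omega)=\nu^n(G(\xi_i(\omega),k))(\omega)$; a routine check shows $(\omega,z)\mapsto\nu^n(G(z,k))(\omega)$ is jointly measurable, so each $g_{n,i}$ is $\cF$-measurable, and — using the identification of $\nu^n(G)(\omega)$ with $\nu^{P_n}(\cdot)$ recalled just before Lemma~\ref{lem:gzk} — on the set of full $\mu$-measure where $x_1,x_2,\dots$ are distinct, $g_{n,i}(\omega)$ is simply the proportion of linear extensions $\lambda$ of $<^\N_{[n]}$ with $\lambda^{-1}(i)\le k$. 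Then I would set
$$
f_n(\omega)=\max_{1\le i\le q} g_{n,i}(\omega)\,\bone(\xi_i(\omega)\notin V),
$$
an $\cF$-measurable function with $0\le f_n\le 1$; since $\Xi_q(\omega)\sm V=\{\xi_i(\omega):i\le q,\ \xi_i(\omega)\notin V\}$, the event whose probability the lemma bounds is exactly $\{f_n<\delta^{k+1}\}$.

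The heart of the argument will be that $f_n\to 0$ $\mu$-a.s. I would work on the full-measure set provided by Lemma~\ref{lem:gzk}, intersected with the distinct-labels set, and fix such an $\omega$. For each $i\le q$ with $\xi_i(\omega)\notin V$ we have $\mu(G(\xi_i(\omega)))=0$ (a tail event, and $\xi_i(\omega)\notin V$), hence $\mu(G(\xi_i(\omega),k))=0$ since $G(z,k)\subseteq G(z)$; Lemma~\ref{lem:gzk} then gives $g_{n,i}(\omega)=\nu^n(G(\xi_i(\omega),k))(\omega)\to\mu(G(\xi_i(\omega),k))=0$ as $n\to\infty$. As $f_n(\omega)$ is the maximum of at most $q$ such sequences (the terms with $\xi_i(\omega)\in V$ being $0$ for every $n$), $f_n(\omega)\to 0$. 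Finally, $\bone(f_n\ge\delta^{k+1})\to 0$ pointwise $\mu$-a.s.\ and is bounded by $1$, so bounded convergence yields $\mu(f_n\ge\delta^{k+1})\to 0$; choosing $n_0\ge\max(q,k)$ large enough that $\mu(f_n\ge\delta^{k+1})<\eps/8$ for all $n\ge n_0$ gives $\mu(f_n<\delta^{k+1})>1-\eps/8$, which is the assertion of the lemma.

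The only slightly delicate points are the bookkeeping ones — the countability of $V$ (so that all the relevant events are measurable), the joint measurability underlying $f_n$, and the identification of $\{f_n<\delta^{k+1}\}$ with the event in the statement. There is no genuine analytic obstacle: the whole analytic content has been offloaded into Lemma~\ref{lem:gzk}, which itself rests on $\mu$ being essential (Theorem~\ref{thm:equivalence}) together with the weak-convergence Theorem~\ref{thm:weak}. The apparent threat — that the convergence $\nu^n(G(z,k))(\omega)\to 0$ is in no way uniform over the $z\notin V$ — will be harmless precisely because, once $\omega$ is fixed, only the at most $q$ elements of $\Xi_q(\omega)$ play any role, and a maximum over finitely many sequences each tending to $0$ still tends to $0$.
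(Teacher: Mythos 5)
Your proposal is correct and takes essentially the same route as the paper: both arguments reduce the statement to the at most $q$ coordinate elements $\xi_1(\omega),\dots,\xi_q(\omega)$, use tail-triviality of $G(z)$ together with Lemma~\ref{lem:gzk} to get $\nu^n(G(\xi_i(\omega),k))(\omega)\to 0$ $\mu$-a.s.\ whenever $\xi_i(\omega)\notin V$, and then conclude by an elementary limiting argument. The only (cosmetic) difference is that the paper applies a union bound over the $q$ coordinates with error $\eps/8q$ each, choosing $n_0(j)$ from a.s.\ convergence, whereas you package the coordinates into the maximum $f_n$ and invoke bounded convergence; your added remarks on the countability of $V$ and the measurability of $f_n$ are correct and fill in bookkeeping the paper leaves implicit.
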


\begin{pf}
We have seen that, a.s., there are no elements persistent for $\omega$ other
than those in $V$. In particular, for each $j=1, \ldots, q$, we have
\[
\mu\bigl(\{ \omega\dvtx \xi_j(\omega) \notin V,  \xi_j(\omega)
\mbox{ is persistent for } \omega\}\bigr) = 0.
\]
Therefore, for $j=1, \ldots, q$, there is some $n_0(j)$ such that
\[
\mu\bigl(\{ \omega\dvtx \xi_j (\omega) \in V \mbox{ or }
\nu^n(G(\xi_j(\omega),k))(\omega) < \delta^{k+1} \mbox{ for all }
n\ge n_0(j)\}\bigr) \ge1 - \eps/8q.
\]
Choosing $n_0$ to be the maximum of the $n_0(j)$ now gives the desired
result.
\end{pf}

\begin{pf*}{Proof of Theorem~\ref{thm:extremal}}
Let $\mu$ be an extremal order-invariant measure on $(\Omega, \cF)$.

For a given $\omega\in\Omega$, the set of elements that are persistent
for $\omega$ forms a~down-set in the causal set $\Pi(\omega)$. We have
seen that this down-set is a.s.\ the set $V = V(\mu)$.

For any $x,y \in V$, the event that $x\prec y$ in $\Pi$ is a tail event.
Therefore, $\mu(\{ \omega\dvtx x\prec y \mbox{ in } \Pi(\omega)\})$ is equal
to~0 or~1. The relation $<^V$ on $V$ defined by $x<^V y$ if and only if
$\mu(\{ \omega\dvtx x \prec y \mbox{ in } \Pi(\omega)\}) = 1$ is a partial
order on $V$, and moreover the restriction $\Pi(\omega)_V$ is a.s.\ equal
to $(V,<^V)$.

Consider any down-set $D$ of $(V,<^V)$, and let $\Gamma_D$ be the (random)
set of elements $y$ of $\Xi\sm V$ such that the set of elements below
$y$ in $\Pi$ is exactly equal to $D$. Then $|\Gamma_D|$ is a random
variable taking values in $\N\cup\{0,\infty\}$. For each~$g$, the set
$\{ \omega\dvtx |\Gamma_D(\omega)| = g\}$ is a tail event, and so
$|\Gamma_D|$ is a.s.\ determined. Our next aim is to show that, for each
$D$, the a.s.\ value of $|\Gamma_D|$ is either $0$ or $\infty$.

Suppose, for a contradiction, that $|\Gamma_D|$ is a.s.\ equal to the
positive integer~$m$. For any Borel set $B \subseteq[0,1]$, the
event $K_D(B) = \{ \omega\dvtx \Gamma_D(\omega) \cap B \not=\varnothing
\}$ is
a~tail event, so $\mu(K_D(B))$ is equal to~0 or~1. We say that $B$ is
\textit{occupied} if $\mu(K_D(B)) =1$. No singleton set $\{v\}$ is occupied:
if $v \in\Gamma_D$ a.s., then certainly $v \in\Xi$ a.s., so $v\in V$,
but we have defined $\Gamma_D$ to be disjoint from $V$.

If $B$ is occupied, and $\{B_1, B_2, \ldots\}$ is any covering of $B$ with
countably many Borel sets $B_i$, then
$K_D(B) \subseteq\bigcup_{i=1}^\infty K_D(B_i)$, so at least one set
$B_i$ in the covering is occupied.

Let $B_1=[0,1]$ and note that $B_1$ is occupied. By repeatedly
interval-halving, we can find a decreasing sequence of closed intervals
$B_1 \supset B_2 \supset\cdots$, with $B_i$ of length $2^{1-i}$, each of
which is occupied, and whose intersection is a single point $v \in[0,1]$.
We now partition $[0,1]$ as the countable union of the Borel sets
$B_j \setminus B_{j+1}$, $j=1,2,\ldots,$ together with the singleton
$\{v\}$. We observe that at most $m$ of these sets are occupied:
otherwise there are a.s.\ more than $m$ elements of $[0,1]$ in $\Gamma_D$.
Moreover, the occupied sets do not include the singleton $\{ v\}$. Hence
there is a maximum $k$ such that $B_k \setminus B_{k+1}$ is occupied. But
then we have a partition of the occupied set $B_{k+1}$ into countably many
sets, none of which are occupied. This is a contradiction.

We conclude that, for each down-set $D$ of $(V,<^V)$, the set $\Gamma_D$
is either a.s.\ empty, or a.s.\ infinite.

We say that a down-set $D$ of $(V,<^V)$ is \textit{active} if $\Gamma_D$ is
a.s.\ infinite. One consequence of what we have just proved is that,
a.s., all minimal elements of the poset restricted to $\Xi\sm V$ have a
down-set (necessarily a subset of $V$) that is active.

We are now in a position to construct the causal set $Q=(Z,<)$ in the
statement of the theorem. We take the causal set $(V,<^V)$, and add a
marked maximal element $z_D$ above each active down-set $D$. We have
already seen that the random causal set $\Pi$ a.s.\ contains $(V,<^V)$ as
a down-set, and infinite antichains $\cA_D$ above each active
down-set~$D$. What remains to be shown is that there are a.s.\ no other
elements in $\Xi$: specifically, we have shown that, a.s., the set
$\cA$
of minimal elements of $\Pi\sm V$ is the union of the infinite antichains
$\cA_D$; we now need to show that there are a.s.\ no nonminimal elements
of $\Pi\sm V$.

We shall prove the following equivalent statement. For every $\eps>0$,
and every $k \in\N$,
%
\begin{equation} \label{eq:eps-k}
\mu\bigl( \{ \omega\dvtx \Xi_k(\omega) \subseteq V \cup\cA(\omega) \}\bigr)
\ge
1- \eps,
\end{equation}
that is, the probability that the first $k$ elements include a pair of
comparable elements that are not in $V$ is at most $\eps$.

For the remainder of the proof, we fix a natural number $k\ge2$, and
some~$\eps$ with $0<\eps\le1$. We set $\delta=\eps/2k^2 >0$ and
$q = 10k\delta^{-(k+1)}\log(5k/\eps\delta^{k+1})$, as in
Lemma~\ref{lem:q}. We note here for future use that
$\delta^k < \eps/8$.

For a given string $z_1z_2 \cdots z_m$ of elements of $V$, let
$<^{[m]}$ be the order on~$[m]$ inducing $<^V_{\{z_1, \ldots, z_m\}}$,
that is, with $i<^{[m]}j$ if and only if $z_i <^V z_j$, and define
\[
E(z_1z_2 \cdots z_m) = E\bigl(\{z_1\}\{z_2\}\cdots\{z_m\}, <^{[m]}\bigr),
\]
the event that an element $\omega\in\Omega$ has $z_1\cdots z_m$ as an
initial substring, with the order according to $<^V$. For $m=0$, corresponding to the empty string, $E()= \Omega$.

We are interested in the first $k$ steps of the causal set process
specified by $\mu$; we need to consider all the \textit{likely} ways that
this process can begin. Accordingly, let $\cT$ be the set of strings
$y_1 \cdots y_j$ of elements of $V$, with $0\le j\le k$, such that
$\mu(E(y_1\cdots y_{i-1}y_i) \mid E(y_1\cdots y_{i-1})) > \delta$ for
$i=1, \ldots, j$. Note that the empty string is in $\cT$; also, by
definition, if a string is in $\cT$, then so is every initial substring
of it. Note also that $\mu(E(y_1\cdots y_j)) > \delta^j \ge\delta
^k$ for
every string $y_1\cdots y_j$ in~$\cT$. One consequence is that there are
at most $\delta^{-k}$ strings in $\cT$.

We enumerate the elements of $V$ as $v_1, v_2, \ldots.$ As the sum of the
probabilities $\mu( \{ \omega\dvtx v_j \in\Xi_q(\omega)\})$ is at most $q$,
there is some $m \in\N$ such that
\[
\sum_{j=m+1}^\infty\mu\bigl(\{ \omega\dvtx v_j \in\Xi_q(\omega)\}\bigr) <
\delta^k.
\]
Set $V_m = \{v_1, \ldots, v_m\}$. Note that every element appearing in a
string in $\cT$ is in $V_m$.

Now, for any string $y_1\cdots y_j$ of elements of $V_m$, we have
$\nu^n(E(y_1\cdots y_j))(\omega) \!\to\mu(E(y_1\cdots y_j))$ a.s., since
$\mu$ is essential. For $n \in\N$, we define
\begin{eqnarray*}
C_n &=& \{ \omega\dvtx |\nu^n(E(y_1\cdots y_j))(\omega)-\mu(E(y_1\cdots y_j))|
< \delta^k/3, \\
&&\mbox{ for all strings } y_1\cdots y_j
\mbox{ of at most $k$ distinct elements of $V_m$} \}.
\end{eqnarray*}
As there are only finitely many strings of at most $k$ distinct elements
of $V_m$, we have $\mu(C_n) \ge1-\eps/8$ for sufficiently large $n$.

If $\omega\in C_n$, $j < k$, $y_1\cdots y_j \in\cT$,
$y_1\cdots y_jy \notin\cT$, and $y \in V_m$, then
\begin{eqnarray}
\label{eq-cn0-b}
\nu^n(E(y_1\cdots y_jy) \mid E(y_1 \cdots y_j))(\omega)
& = & \frac{\nu^n(E(y_1 \cdots y_jy))(\omega)}
{\nu^n(E(y_1\cdots y_j))(\omega)} \nonumber\\
& < & \frac{\mu(E(y_1\cdots y_jy)) + \delta^k/3}{\mu(E(y_1\cdots y_j))
- \delta^k/3}\nonumber\\[-8pt]\\[-8pt]
& < & \frac{\delta\mu(E(y_1\cdots y_j))}{2/3\mu(E(y_1\cdots y_j))}
+ \frac{\delta^k/3}{\delta^{k-1} - \delta^{k-1}/3} \nonumber\\
& < & \frac{3}{2} \delta+ \frac{1}{2}\delta= 2\delta.\nonumber
\end{eqnarray}

We now fix some $n\ge q$ (depending on $k$, $\epsilon$, and also on the
measure $\mu$) large enough that $\mu(C_n) \ge1 -\eps/8$, and also
large enough for the conclusion of Lemma~\ref{lem:quant} to hold.

For $\omega\in\Omega$, we define a \textit{bad string} for $\omega$ to
be a
string $y_1\cdots y_jy$ of elements of $\Xi_n(\omega)$, with $j < k$,
such that $y_1\cdots y_j$ is in $\cT$, $y_1\cdots y_jy$ is not in $\cT$,
and $\nu^n(E(y_1\cdots y_jy) \mid E(y_1\cdots y_j))(\omega) \ge
2\delta$.
Set $F = \{ \omega\dvtx \mbox{ there are no bad strings }\break \mbox{for } \omega\}$.

We consider also the following events:
\begin{eqnarray*}
B^1 &=& \{ \omega\dvtx \Xi_q(\omega) \cap(V\sm V_m) = \varnothing\}, \\
B^2 &=& \{ \omega\dvtx \mbox{for all } z \in\Xi_q(\omega)\sm V,
\nu^n(G(z,k))(\omega) < \delta^{k+1} \}, \\
B^3 &=& \{ \omega\dvtx \mbox{for all } z \in
\Xi_n(\omega)\sm\Xi_q(\omega),
\nu^n(G(z,k))(\omega) < \delta^{k+1} \}.
\end{eqnarray*}
We claim that $C_n \cap B^1 \cap B^2 \cap B^3 \subseteq F$.

Indeed, if $\omega$ is in $C_n$, then from (\ref{eq-cn0-b}) it follows
that there are no bad strings $y_1\cdots y_jy$ for $\omega$ with
$y \in V_m$. If $\omega$ is in $B^1$, it certainly follows that there
are no bad strings with $y \in\Xi_q(\omega) \cap(V \sm V_m)$. Finally,
if $\omega$ is in $B^2 \cap B^3$, then
$\nu^n(G(z,k))(\omega) < \delta^{k+1}$ for all
$z \in\Xi_n(\omega) \sm(\Xi_q(\omega) \cap V)$; if $\omega$ is
also in
$C_n$, then this implies that
\[
\nu^n(E(y_1\cdots y_jz) \mid E(y_1\cdots y_j))(\omega) <
\frac{\delta^{k+1}}{\nu^n(E(y_1\cdots y_j))(\omega)} <
\frac{\delta^{k+1}}{\delta^k - \delta^k/3} < 2\delta
\]
for all strings $y_1\cdots y_j \in\cT$ and all
$z \in\Xi_n(\omega) \sm(\Xi_q(\omega) \cap V)$, and so there are
no bad
strings $y_1\cdots y_jz$ for $\omega$ with
$z \in\Xi_n(\omega) \sm(\Xi_q(\omega) \cap V)$.\vspace*{1pt} We conclude that, if
$\omega\in C_n \cap B^1 \cap B^2 \cap B^3$, then there are no bad strings
for $\omega$, and so $\omega\in F$.

We chose $n$ so that $\mu(C_n) \ge1 -\eps/8$, and so that
$\mu(B^2) \ge1-\eps/8$---see Lemma~\ref{lem:quant}. We also chose $m$
so that $\mu(B^1) \ge1 - \delta^k > 1-\eps/8$.

To estimate $\mu(B^3)$, we use Theorem~\ref{thm:DLR} to express this as
$\E_\mu(\nu^n(B^3))$. The event $B^3$ depends only on the finite
poset $\Pi_n(\omega)$, and $\nu^n(B^3)(\omega)$ is the probability
that, in a uniformly random linear extension of this poset, all the
elements $z$ with $\nu^n(G(z,k)) \ge\delta^{k+1}$ appear among the
first $q$.
Lemma~\ref{lem:q} now tells us that $\nu^n(B^3)(\omega) > 1 -\eps/8$
for every $\omega\in\Omega$, and therefore we have
$\mu(B^3) > 1-\eps/8$.

Hence, we have $\mu(F) \ge1-\eps/2$.

Now let
\[
H = \{ \omega' \in\Omega\dvtx \Xi_k(\omega') \subseteq
V \cup\cA(\omega') \}.
\]
%
For $\omega\in F$, we claim that $\nu^n(H)(\omega) \ge1-\eps/2$.

To verify the claim, we take $\omega\in F$, and apply
Lemma~\ref{lem:finite} to the finite poset $\Pi_n(\omega)$. Note that
$\nu^n(H)(\omega)$ is the probability that, in a uniformly random linear
extension of this poset, all the first $k$ elements are either in~$V$ or
minimal in $\Pi_n\sm V$---in other words the first $k$ elements do not
contain\vadjust{\eject} a pair of comparable elements that are not in $V$. We apply
Lemma~\ref{lem:finite} with~$\cZ$ equal to the family of sets
$\{z_1,\ldots,z_j\}$, for $z_1\cdots z_j$ a string in $\cT$ whose elements
are all in $\Pi_n(\omega)$, and with $\delta$ replaced by $2\delta
$. The
statement that $\omega\in F$ implies that the condition of
Lemma~\ref{lem:finite} on $\cZ$ is satisfied. As all the elements
appearing in strings in $\cT$ are in $V_m\subseteq V$, the union $Y$ of
the sets in $\cZ$ is a subset of $V$. We conclude from
Lemma~\ref{lem:finite} that
\[
\nu^n(H)(\omega) \ge1 - \pmatrix{k\cr2} 2 \delta =
1 - \pmatrix{k\cr 2} 2 \frac{\eps}{2k^2} \ge1 - \eps/2
\]
for all $\omega\in F$, as claimed.

We now have, by Theorem~\ref{thm:DLR} and our earlier calculations:
\[
\mu(H) = \E_\mu(\nu^n(H)) \ge\mu(F) (1-\eps/2) \ge(1-\eps/2)^2
\ge1-\eps.
\]
This establishes (\ref{eq:eps-k}), and completes the proof.
\end{pf*}

Now we have proved Theorem~\ref{thm:extremal}, it is possible to say more
about the nature of extremal order-invariant measures. In what follows,
we omit some of the details.

Suppose $Q=(Z,<)$ is a causal set or finite poset, with a set
$M \subseteq Z$ of marked maximal elements. Let $\mathcal{R}(Q)$ be the
set of causal sets obtained from~$Q$ by replacing each element $z$ of $M$
with a countably infinite antichain $A_z \subset[0,1]$. Let
$H_Q = \{ \omega\in\Omega\dvtx \Pi(\omega) \in\mathcal{R}(Q)\}$.
Theorem~\ref{thm:extremal} says that every extremal order-invariant
measure $\mu$ has $\mu(H_Q) =1$ for some $Q$. We may assume that the
down-sets $D(z)$, for $z \in M$, are all distinct: otherwise, we may
replace a set of elements of $M$ having the same down-set by a single
element of $M$. We say that an order-invariant measure $\mu$, or its
associated order-invariant process, \textit{generates} $Q$ if $\mu(H_Q)=1$.

Fix $Q$ and $M$ as above, and suppose $\mu$ is an extremal order-invariant
measure generating $Q$. If $M$ is empty, then $Q$ is a causal set, and
$\mu$ is a causet measure on the fixed causal set $Q$. Moreover, the set
of order-invariant measures on $Q$ is a convex subset of the set of all
order-invariant measures, and $\mu$ is an extremal element of this set.

If $M$ is non-empty, fix attention on one element $z\in M$, and let
$D = D(z)$ be the down-set of elements below $z$ in $Q$, all of which are
unmarked. For $\omega\in\Omega$ and $n \in\N$, let $R_n(\omega)$ be
the proportion of elements of $\Xi_n(\omega)$ that have down-set
equal to
$D$ in $\Pi_n(\omega)$. It is not too hard to show that there is some
$q>0$ such that, $\mu$-a.s., $R_n \to q$.

Now, for each $\omega\in H_Q$ such that $R_n(\omega) \to q$, let
$\zeta_1(\omega),\zeta_2(\omega), \ldots$ be the sequence of labels of
those elements of $\Xi(\omega)$ with $D(\zeta_i) = D$. Order-invariance
implies that the sequence $(\zeta_1,\zeta_2, \ldots)$ is a sequence of
exchangeable random variables. Therefore, by the Hewitt--Savage theorem,
as $\mu$ is extremal, there is some probability distribution $\rho$ on
$[0,1]$ such that the $\zeta_i$ are i.i.d. random variables with distribution
$\rho$.

Given a number $q \in(0,1]$, a probability distribution $\rho$ on
$[0,1]$, and a~marked element $z \in M$, we say that an order-invariant
causet process generating $Q$ \textit{produces $z$ with parameters
$(q,\rho)$} if, at every stage after all elements of $D(z)$ have been
generated, $<^{[k+1]}$ is obtained from $<^{[k]}$ by placing $k+1$ above
the elements in the set of indices corresponding to $D(z)$ with
probability $q$, and---conditioned on that event---the new element
$\xi_{k+1}$ has distribution~$\rho$.

Suppose $Q=(Z,<)$ is a causal set or finite poset with a marked set~$M$ of
maximal elements, and $\mu$ is an extremal order-invariant measure that
generates $Q$. It can be shown that, for each element $z$ of $M$, there
is a~number $q(z) \in(0,1]$ and a probability distribution $\rho(z)$ on
$[0,1]$ such that $\mu$ produces~$z$ with parameters $(q(z),\rho(z))$.

The sum of the $q(z)$ must be at most~1, as these are probabilities of
disjoint events; if the poset $Q$ is finite, then the sum must equal~1.

If there is an extremal order-invariant measure generating $Q$, producing
each $z \in M$ with parameters $(q(z),\rho(z))$, then for any other
set of
distributions $(\rho'(z))$, there is also an extremal order-invariant
measure generating~$Q$, producing each $z \in M$ with parameters
$(q(z),\rho'(z))$. To obtain this, we simply change the description of
the order-invariant process, replacing each~$\rho(z)$ by $\rho'(z)$.

Moreover, given a causal set $Q$ with a set $M$ of marked maximal
elements, define $Q'$ by replacing each element $z$ of $M$ with an
infinite chain $C_z$, labeled arbitrarily in such a way that all labels
are distinct. Given an extremal order-invariant process generating $Q$,
producing each $z \in M$ with parameters $(q(z),\rho(z))$, then we can
obtain an extremal order-invariant process on the fixed causal set $Q'$:
whenever the original process calls for an element with down-set $D(z)$,
in the new process we take the next element of the chain $C_z$.

This process is reversible: if we have an extremal order-invariant measure
on some fixed causal set $P=(X,<)$, and there is an infinite chain $C$
in~$P$ such that all elements of $C$ are above some set $D$ and incomparable
to all elements of $X \sm(C \cup D)$, then we obtain an order-invariant
measure generating the poset $Q$ obtained from $P$ by replacing $C$ by a
single marked maximal element.

Therefore, in order to describe all extremal order-invariant measures, it
suffices to describe all extremal order-invariant measures on fixed causal
sets~$P$. This serves to motivate the work in our companion
paper~\cite{BL2}.

%

\printaddresses

\end{document}